\numberwithin{equation}{section} 
\newcommand{\m}[1]{\mathcal{#1}}
\newcommand{\bb}[1]{\mathbb{#1}}
\newcommand{\mrm}[1]{\mathrm{#1}}
\newcommand{\diff}{\partial}
\newcommand{\bdiff}{\bar{\partial}} 
\newcommand{\I}{\mathrm{i}\mkern1mu}	
\newcommand{\transpose}{\intercal}		
\DeclarePairedDelimiter\abs{\lvert}{\rvert}	
\DeclarePairedDelimiter\norm{\lVert}{\rVert}	
\DeclarePairedDelimiter{\set}{\{}{\}}	
\newcommand{\tc}{\mathrel{}\mathclose{}\middle|\mathopen{}\mathrel{}}	
\newcommand{\cotJ}{T^*\!\m{J}}	
\newcommand{\SpecRad}{\mathrm{r}\mkern1mu} 
\newtheorem{theorem}{Theorem}[section]
\newtheorem{proposition}[theorem]{Proposition}
\newtheorem{lemma}[theorem]{Lemma}
\newtheorem{corollary}[theorem]{Corollary}
\theoremstyle{definition}
\newtheorem{definition}[theorem]{Definition}
\theoremstyle{remark}
\newtheorem{remark}[theorem]{Remark}
\newtheorem*{claim}{Claim}
\begin{document}
\title{The HcscK equations in symplectic coordinates}
\author{Carlo Scarpa         \and
        Jacopo Stoppa 
}
%
\date{\vspace{-10pt}}

\maketitle

\begin{abstract}
\noindent\textsc{Abstract.} The Donaldson-Fujiki K\"ahler reduction of the space of compatible almost complex structures, leading to the interpretation of the scalar curvature of K\"ahler metrics as a moment map, can be lifted canonically to a hyperk\"ahler reduction. Donaldson proposed to consider the corresponding vanishing moment map conditions as (fully nonlinear) analogues of Hitchin's equations, for which the underlying bundle is replaced by a polarised manifold. However this construction is well understood only in the case of complex curves. In this paper we study Donaldson's hyperk\"ahler reduction on abelian varieties and toric manifolds. We obtain a decoupling result, a variational characterisation, a relation to $K$-stability in the toric case, and prove existence and uniqueness under suitable assumptions on the ``Higgs tensor''. We also discuss some aspects of the analogy with Higgs bundles.\newline
MSC2020: 53C55, 53C26, 32Q15, 32Q60.
\end{abstract}

\setcounter{secnumdepth}{2}
\setcounter{tocdepth}{2}
\tableofcontents

\section{Introduction}

Let~$(M,\omega)$ denote a compact K\"ahler manifold of dimension~$n$, with a fixed K\"ahler form. Donaldson~\cite{Donaldson_scalar} and Fujiki~\cite{Fujiki_moduli} constructed a Hamiltonian action of the group of Hamiltonian symplectomorphisms~$\operatorname{Ham}(M, \omega)$ on the space~$\mathcal{J}$ of almost complex structures compatible with the symplectic form~$\omega$, endowed with a natural K\"ahler structure. It turns out that the moment map for this action, evaluated at~$J\in\m{J}$ on Hamiltonians~$h$, is given by
\begin{equation*}
\mu_J(h) = \int_M 2 (s(g_J) -\hat{s}) h \,\frac{\omega^n}{n!},
\end{equation*}
where~$s(g_J)$ denotes the Hermitian scalar curvature of the Hermitian metric~$g_J = \omega\circ J$. Thus the zero moment map equation is precisely the constant scalar curvature condition for~$g_J$. This fact found important applications in complex differential geometry (see e.g. the classic~\cite{Donaldson_scalarcurvature_embeddingsI}). 

Donaldson~\cite{Donaldson_hyperkahler} also proposed to study the induced~$\operatorname{Ham}(M, \omega)$-action on the cotangent bundle~$\cotJ$. By analogy with the finite-dimensional case, one expects that this carries the structure of an infinite-dimensio\-nal hyperk\"ahler manifold, preserved by the action of~$\operatorname{Ham}(M, \omega)$. Then the corresponding real and complex moment map equations would give a close analogue of Hitchin's equations for harmonic bundles (originally introduced in~\cite{Hitchin_self_duality}), in the context of special metrics in K\"ahler geometry. In this analogy~$\cotJ$ replaces the cotangent bundle to the space of~$\bdiff$-operators on a fixed smooth Hermitian bundle, and~$\operatorname{Ham}(M, \omega)$ plays the role of the unitary gauge group. So one can think of these equations roughly as deforming the constant scalar curvature condition with a ``Higgs field''. Note that in this analogy the rank of the bundle corresponds to the dimension of~$M$. The case of ``line bundles", i.e. complex curves, was studied in detail in~\cite{Donaldson_hyperkahler,Hodge_phd_thesis,ScarpaStoppa_HcscK_curve} (see also~\cite{traut} for related results). 

In the higher dimensional case the relevant hyperk\"ahler structure on~$\cotJ$ was described explicitly in~\cite{ScarpaStoppa_hyperk_reduction}. Let us briefly recall the construction. As in the work of Donaldson and Fujiki, one regards~$\m{J}$ as the space of sections of a (non-principal)~$\operatorname{Sp}(2n)$-bundle with fibres diffeomorphic to the model space of compatible linear complex structures, namely the quotient~$\operatorname{Sp}(2n)/\operatorname{U}(n)$. This is a Hermitian symmetric space of noncompact type, naturally identified with Siegel's upper half space.

By general results due to Biquard and Gauduchon~\cite{Biquard_Gauduchon} there exists a \emph{unique}~$\operatorname{Sp}(2n)$-invariant real function~$\rho$, defined on a~$\operatorname{Sp}(2n)$-invariant open neighbourhood of the zero section of the cotangent bundle~$T^*\!(\operatorname{Sp}(2n)/\operatorname{U}(n))$, such that adding~$\I\diff\bdiff\rho$ to the pullback of the invariant K\"ahler form on~$\operatorname{Sp}(2n)/\operatorname{U}(n)$ we obtain a hyperk\"ahler metric. Since the cotangent bundle~$\cotJ$ is a~$\operatorname{Sp}(2n)$-bundle with fibres diffeomorphic to~$T^*(\operatorname{Sp}(2n)/\operatorname{U}(n))$, this can be transferred to an infinite-dimensional, formally hyperk\"ahler metric on a~$\operatorname{Ham}(M, \omega)$-invariant open neighbourhood~$\mathcal{U}$ of the zero section of~$\cotJ$, preserved by the action of~$\operatorname{Ham}(M, \omega)$: this is the content of Theorem 1.1 in~\cite{ScarpaStoppa_hyperk_reduction}.

Let~$\Omega_{\operatorname{I}}$ denote the K\"ahler form on~$\mathcal{U}$ corresponding to the hyperk\"ahler metric and the standard complex structure~$\operatorname{I}$ on~$\cotJ$ (induced by the Donaldson-Fujiki complex structure on~$\mathcal{J}$). Let~$\Theta$ be the tautological holomorphic symplectic form on~$\cotJ$. It is shown in~\cite{ScarpaStoppa_hyperk_reduction} Theorem 1.2 that the action of~$\operatorname{Ham}(M, \omega)$ on~$\mathcal{U}$ is Hamiltonian with respect to both the real symplectic form~$\Omega_{\operatorname{I}}$ and the complex symplectic form~$\Theta$. The corresponding real and complex moment maps, evaluated at a point~$(J, \alpha) \in \mathcal{U} \subset \cotJ$ ($\alpha$ denoting a morphism~$\alpha:{T^{0,1}}^*\!M\to{T^{1,0}}^*\!M$, with dual~$\alpha^{\vee}$), are given respectively by 
\begin{equation}\label{eq:real_mm_implicit}
m^{\bb{R}}_{(J,\alpha)}(h)=\mu_{J}(h) + \int_{x\in M}d^c\rho_{(J(x),\alpha(x))}\left(\mathcal{L}_{X_h}J,\mathcal{L}_{X_h}\alpha\right)\,\frac{\omega^n}{n!}
\end{equation}
and
\begin{equation*}
m^{\bb{C}}_{(J,\alpha)}(h)= -\int_M\frac{1}{2}\mrm{Tr}(\alpha^{\vee}\mathcal{L}_{X_h}J)\,\frac{\omega^n}{n!}.
\end{equation*}
The corresponding vanishing moment map conditions are the \emph{coupled} equations, to be solved for~$(J, \alpha)$,
\begin{equation}\label{eq:HcscK}
m^{\bb{R}}_{(J,\alpha)}(h) = m^{\bb{C}}_{(J,\alpha)}(h) = 0, \, \forall h \in \mathfrak{ham}(M, \omega).
\end{equation}
Because of their close relation to the constant scalar curvature condition, and the analogy with Hitchin's harmonic bundle equations, these are called the \emph{Hitchin-cscK (HcscK) equations}  in~\cite{ScarpaStoppa_hyperk_reduction}.

\medbreak

In the present paper we study the equations~\eqref{eq:HcscK} in the special cases when~$M$ is a complex torus or a toric manifold. The two situations share an interesting feature, the existence of a set of \emph{symplectic coordinates} on an open dense subset of~$M$.

\subsection{The abelian case}

Consider first the case of a complex torus. Without loss of generality we can assume that this is in fact the abelian variety
\begin{equation*}
M=\mathbb{C}^n/(\mathbb{Z}^n+i\mathbb{Z}^n),
\end{equation*} 
endowed with the standard flat K\"ahler form given in complex coordinates~$z = x + i w$ by
\begin{equation*}
\omega = i \sum_{a}dz^a\wedge d\bar{z}^a.
\end{equation*}
Even on this simple geometry the equations~\eqref{eq:HcscK} are highly nontrivial. Note that there is a real torus~$\mathbb{T}^n \subset \operatorname{Symp}(M, \omega)$, with~$\bb{T}^n \cong \bb{R}^n/\bb{Z}^n$, acting on~$M$ by translations   
\begin{equation*}
t\cdot(x +i w)=x+i(w+t).
\end{equation*} 
We will study a particular set of solutions~$(J, \alpha)$ to the HcscK equations~\eqref{eq:HcscK} on~$M$ such that the~$\omega$-compatible almost complex structure~$J$ and the endomorphism~$\alpha$ (giving a cotangent vector at~$J$) are both~$\mathbb{T}^n$-invariant. The class of almost complex structures~$J$ we consider is that of Legendre duals to K\"ahler potentials for invariant K\"ahler metrics in the class~$[\omega]$. Such a~$J$ is automatically integrable, but we make no integrability assumption on the deformation of complex structure~$\alpha^{\vee}$. This class of Legendre dual almost complex structures is standard in toric complex differential geometry and can be understood formally as an orbit of the (non-existent) complexification of the ``gauge group''~$\operatorname{Ham}(M, \omega)$; we refer the reader to~\cite{Donaldson_scalar} for more details.

Under these assumptions, there exist real coordinates~$(y, w)$ such that 
\begin{equation}\label{eq:LegendreDualJ}
J(y, w) = J(y) = 
\left(\begin{matrix} 0 & -(D^2 u)^{-1} \\
D^2 u & 0
\end{matrix}\right),
\end{equation}
where~$u(y)$ is a convex function on~$\bb{R}^n$, with periodic Hessian, of the form
\begin{equation*}
u(y) = \frac{1}{2}\abs{y}^2 + \phi(y).
\end{equation*}
Note that 
$u_0 = \frac{1}{2}\abs{y}^2$
corresponds to our fixed reference complex structure~$J_0$ on~$M$, with flat K\"ahler metric~$g_{J_0}$. A cotangent vector~$\alpha$ at such~$J \in \m{J}$ can be regarded as an endomorphism of the (trivial) complexified cotangent bundle~$T^*_{\bb{C}}M$. We can lower an index of~$\alpha$ using the Hermitian metric~$g_J$ and obtain a complex bilinear form~$\xi$ on the fibres of the (trivial) bundle~$T_{\bb{C}}M$. The fact that~$\alpha$ preserves~$\omega$-compatibility to first order (since it is a cotangent vector) translates into the symmetry condition 
$\xi^{ab} = \xi^{ba}$.
Thus in effect both our relevant tensors~$D^2 u$ and~$\xi$ are functions on the real torus~$\mathbb{T}^n$ with values in complex symmetric matrices; moreover~$D^2 u$ is real and positive definite. Pairs~$(u, \xi)$ corresponding to~$(J, \alpha) \in \mathcal{U} \subset \cotJ$, the admissible neighbourhood of the zero section, must satisfy the bound
\begin{equation*}
\SpecRad(u, \xi) < 1
\end{equation*}
on the spectral radius~$\SpecRad(u, \xi):= \SpecRad(\xi\,D^2u\,\bar{\xi}\,D^2u)$. In what follows we refer occasionally to~$u$ as the \emph{symplectic potential}. Abusing terminology, sometimes we also call~$\xi$ the \emph{Higgs tensor}, although \emph{no integrability assumption is made on~$\xi$}.

Our first result shows that the equations~\eqref{eq:HcscK} become more explicit and treatable when expressed in terms of~$u$ and~$\xi$. The real moment map equation is formulated in terms of the square root of the endomorphism
\begin{equation*}
\mathbbm{1}-\xi\,D^2u\,\bar{\xi}\,D^2u.
\end{equation*}
This square root exists and is unique: in the proof of Theorem~\ref{thm:real_mm_expl} we will show that~$\xi\,D^2u\,\bar{\xi}\,D^2u$ is similar to a Hermitian endomorphism with eigenvalues in the interval~$[0,1)$.
\begin{theorem}\label{thm:HcscKSymplCoordThm}
The HcscK equations~\eqref{eq:HcscK} on the abelian variety~$M$ for a~$\mathbb{T}^n$-invariant symplectic form~$\omega$, complex structure~$J$ and deformation~$\alpha$, with~$J$ given by~\eqref{eq:LegendreDualJ}, are equivalent to the system of \emph{uncoupled} partial differential equations  
\begin{equation}\label{eq:HcscK_squareroot}
\begin{dcases}
\xi^{ab}_{,ab}=0\\
\left(\left(\mathbbm{1}-\xi\,D^2u\,\bar{\xi}\,D^2u\right)^{\frac{1}{2}}D^2u^{-1}\right)^{ab}_{,ab}=0,
\end{dcases}
\end{equation}
corresponding the vanishing of the complex and real moment map respectively.
\end{theorem}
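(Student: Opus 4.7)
The strategy is to evaluate each moment map integrand in the $\mathbb{T}^n$-invariant symplectic coordinates $(y,w)$, integrate by parts along the torus fibres to transfer all derivatives off the test Hamiltonian $h$, and then use density of Hamiltonians in smooth functions of mean zero to pass to a pointwise equation. Since $u$ and $\xi$ depend only on $y$, the computations collapse onto the base $\bb{R}^n/\bb{Z}^n$.

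\textbf{The complex moment map.} In the block form \eqref{eq:LegendreDualJ} one can write $\mathcal{L}_{X_h}J$ explicitly in terms of the components of the Hessian of $h$; after contracting with $\alpha^\vee$ and lowering one index via $g_J$ to produce the symmetric tensor $\xi^{ab}$, the integrand of $m^{\bb{C}}_{(J,\alpha)}(h)$ becomes proportional to $\xi^{ab}h_{,ab}$. Integrating by parts twice in the $w$-variables yields
\begin{equation*}
m^{\bb{C}}_{(J,\alpha)}(h)=-\tfrac{1}{2}\int_M h\,\xi^{ab}_{,ab}\,\frac{\omega^n}{n!},
\end{equation*}
so vanishing on all Hamiltonians is equivalent to the first equation in \eqref{eq:HcscK_squareroot}.

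\textbf{The real moment map.} The Donaldson--Fujiki piece $\mu_J(h)$ is handled by Abreu's formula, which in our symplectic coordinates reads $s(g_J)=-\tfrac{1}{2}\big((D^2u)^{-1}\big)^{ab}_{,ab}$; after integrating by parts it reproduces the $\xi=0$ specialisation of the second equation in \eqref{eq:HcscK_squareroot}. The core of the argument is the Biquard--Gauduchon correction. One uses the explicit $\operatorname{Sp}(2n)$-invariant potential $\rho$ on $T^*(\operatorname{Sp}(2n)/\operatorname{U}(n))$ supplied by~\cite{ScarpaStoppa_hyperk_reduction}, which by invariance is a function of the spectral invariants of $\xi\,D^2u\,\bar\xi\,D^2u$. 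Pulling $d^c\rho$ back to $\cotJ$ via the trivialisation afforded by $J(y)$, evaluating on $(\mathcal{L}_{X_h}J,\mathcal{L}_{X_h}\alpha)$ and integrating by parts, one expects an integrand of the form
\begin{equation*}
h\cdot\Big[\big((\mathbbm{1}-\xi D^2u\,\bar\xi D^2u)^{\frac{1}{2}}(D^2u)^{-1}\big)^{ab}_{,ab}-\big((D^2u)^{-1}\big)^{ab}_{,ab}\Big]\,\frac{\omega^n}{n!},
\end{equation*}
where the second divergence exactly cancels the scalar-curvature contribution, leaving the second equation in \eqref{eq:HcscK_squareroot}. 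The well-definedness of the square root is a side check: conjugating $\xi D^2u\,\bar\xi D^2u$ by $(D^2u)^{1/2}$ produces the Hermitian positive semidefinite endomorphism $(D^2u)^{1/2}\xi(D^2u)\bar\xi(D^2u)^{1/2}$, whose spectral radius is $\SpecRad(u,\xi)<1$ by hypothesis, so its eigenvalues lie in $[0,1)$ and the square root exists, is unique, and varies smoothly.

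\textbf{Main obstacle.} The delicate step is the coordinate computation of $d^c\rho$ along the infinitesimal Hamiltonian directions, and the identification of $(\mathbbm{1}-\xi D^2u\,\bar\xi D^2u)^{1/2}$ as the precise tensor produced by differentiating $\rho$. This is where the full force of the Biquard--Gauduchon construction, together with the explicit description of the hyperkähler metric on the Siegel fibre, enters. The further fact that \eqref{eq:HcscK_squareroot} is \emph{uncoupled} is not automatic from the general form of \eqref{eq:real_mm_implicit}: it is a consequence of the $\mathbb{T}^n$-symmetry, under which every would-be mixed divergence term involving both $u$ and $\xi$ vanishes after integration by parts.
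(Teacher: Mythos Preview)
Your proposal is a plan rather than a proof: the central computation---evaluating the Biquard--Gauduchon correction $d^c\rho$ on $(\mathcal{L}_{X_h}J,\mathcal{L}_{X_h}\alpha)$ and showing it produces exactly the square-root divergence---is asserted as something ``one expects'' and then flagged as ``the delicate step'', but never carried out. That step is the entire content of the theorem beyond Abreu's formula, so at present there is no proof.

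The paper proceeds differently, and in a way that makes the computation tractable. It first establishes (Theorem~\ref{thm:real_mm_expl}) a \emph{pointwise} expression for the real moment map, valid on any K\"ahler manifold: $m^{\bb{R}}_{(J,\alpha)}=2(s(g_J)-\hat s)+\operatorname{div}X(J,\alpha)$ for an explicit vector field $X$ built from $\alpha$, $\bar\alpha$ and $\hat\alpha=\tfrac12(\mathbbm{1}+(\mathbbm{1}-\alpha\bar\alpha)^{1/2})^{-1}$. This requires diagonalising $\alpha\bar\alpha$ via the Takagi--Autonne factorisation and computing the spectral function $\rho$ and its derivative in closed form. Only then does the paper pass to symplectic coordinates, using the elementary Lemma~\ref{lemma:divergenza_coord_sympl} (divergence of an invariant $(1,0)$-field is $\tfrac12\partial_{y^a}(u^{ab}\chi_b)$) together with Lemma~\ref{lemma:reale} to convert $\operatorname{div}X$ into $(\check\alpha\,\xi\,G\,\bar\xi)^{ab}_{,ab}$; the identity $x(1+\sqrt{1-x})^{-1}=1-\sqrt{1-x}$ then collapses this to the square-root form. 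The complex moment map is handled similarly: the paper does not compute $\mathcal{L}_{X_h}J$ directly but uses the known pointwise equation $\operatorname{div}(\partial^*A^{0,1})=0$ and translates it via Legendre duality, where the $u$-dependence cancels \emph{algebraically}, giving $\xi^{ab}_{,ab}=0$.

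Two smaller points. First, your ``integrating by parts twice in the $w$-variables'' is not right: everything in sight is $\bb{T}^n$-invariant and the derivatives in $\xi^{ab}_{,ab}$ are $y$-derivatives, so the relevant integration by parts (to the extent one is needed at all) is over the base. Second, your explanation of the decoupling is misleading: the complex equation is free of $u$ not because ``mixed divergence terms vanish after integration by parts'' but because in the computation of $\partial^*A^{0,1}$ the factors $u_{ad}$ and $\partial_{y^b}u^{ef}$ cancel identically, leaving $-\tfrac12 u_{ad}\partial_{y^b}\xi^{bd}$, whose divergence is $-\tfrac14\xi^{bc}_{,bc}$. Your square-root side check, on the other hand, is correct and in fact more direct than the paper's Takagi--Autonne route.
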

\begin{remark}
We will show (Proposition~\ref{proposition:integrability_xi}) that the integrability condition for the deformation of complex structure~$\alpha^{\vee}$ dual to the cotangent vector~$\alpha$ is equivalent to~$\xi$ being of the form~$(D^2u)^{-1}\,D^2\varphi\,(D^2u)^{-1}$, for a function~$\varphi$ whose Hessian is a periodic function of~$y$. Thus a general solution of the complex moment map equation does not correspond to an integrable deformation of the complex structure, but rather to an almost K\"ahler deformation.
\end{remark}
\begin{remark}\label{U1Rmk}
By~\eqref{eq:HcscK_squareroot} the~$\operatorname{U}(1)$-action on~$T^*\m{J}$ given by~$\theta\cdot\alpha = e^{i\theta}\alpha$ preserves the real and complex moment map equations.
\end{remark}

We will show that the real moment map equation also admits a useful variational characterisation. In order to describe this, let us introduce the strictly increasing, convex function of a real variable~$x\in [0,1]$ given by 
\begin{equation*}
f(x) =1-(1-x)^{\frac{1}{2}}+ \log(1+(1-x)^{\frac{1}{2}}).
\end{equation*}
The aforementioned Biquard-Gauduchon function on~$\cotJ$ can then be expressed as a spectral function of the endomorphism~$\alpha\bar{\alpha}$ of~$T^*M$ (see the proof of Theorem~\ref{thm:real_mm_expl}):
\begin{equation*}
\rho(J,\alpha) = \mrm{Tr}\,f(\alpha\bar{\alpha}).
\end{equation*}
Let us write~$d\mu$ for the Lebesgue measure on the real torus~$\mathbb{T}^n$. We define the \emph{Biquard-Gauduchon functional} as
\begin{equation}\label{eq:BGFunc}
\m{H}(u, \xi) = \frac{1}{2} \int_{\mathbb{T}^n} \rho\left(\xi\,D^2u\,\bar{\xi}\,D^2u\right)d\mu.
\end{equation}
The \emph{periodic~$K$-energy} (see~\cite[\S$2$]{FengSzekelyhidi_abelian}) is given by
\begin{equation}\label{eq:KEn}
\m{F}(u) = -\frac{1}{2} \int_{\mathbb{T}^n} \log\det(D^2 u)  d\mu.
\end{equation}
Then we define the \emph{periodic \textit{HK}-energy} as
\begin{equation}\label{eq:HKEn}
\widehat{\m{F}}(u, \xi) = \m{F}(u) + \m{H}(u,\xi).
\end{equation}
\begin{proposition}\label{proposition:HKEulLagr}
For fixed Higgs tensor~$\xi$, the real moment map equation in~\eqref{eq:HcscK_squareroot} is the Euler-Lagrange equation, with respect to variations of the symplectic potential~$u$, for the periodic \textit{HK}-energy~$\widehat{\m{F}}(u, \xi)$.
\end{proposition}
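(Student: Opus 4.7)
The plan is to compute the first variation of $\widehat{\m F}(u,\xi) = \m F(u) + \m H(u,\xi)$ in $u$ at fixed $\xi$, integrate by parts on the periodic torus to remove boundary terms, and identify the Euler--Lagrange equation with the second equation of~\eqref{eq:HcscK_squareroot}. Write $B := D^2 u$. Jacobi's formula gives $\delta\log\det B = (B^{-1})^{ab}\,\delta u_{,ab}$, so after two integrations by parts
\begin{equation*}
\delta\m F = -\tfrac{1}{2}\int_{\bb T^n}(B^{-1})^{ab}_{,ab}\,\delta u\, d\mu.
\end{equation*}

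The crux is the variation of $\m H$. Set $A := \xi B\bar\xi B$. Since $\rho(A)=\mrm{Tr}\,f(A)$ is a spectral function, $\delta\rho(A) = \mrm{Tr}(f'(A)\,\delta A)$, with $\delta A = \xi\,\delta B\,\bar\xi B + \xi B\,\bar\xi\,\delta B$. Differentiating the explicit formula for $f$ yields the key algebraic identity
\begin{equation*}
2x\,f'(x) = 1 - g(x),\qquad g(x):=\sqrt{1-x},
\end{equation*}
which is built into the Biquard--Gauduchon function precisely so that the square root of~\eqref{eq:HcscK_squareroot} appears. Combined with $\xi B\bar\xi = AB^{-1}$ it gives $f'(A)\,\xi B\bar\xi = \tfrac{1}{2}(\mathbbm{1}-g(A))B^{-1}$. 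Trace cyclicity rewrites the other piece of $\mrm{Tr}(f'(A)\,\delta A)$ as $\mrm{Tr}(\bar\xi B f'(A)\xi\,\delta B)$, and applying the same identity to the transpose $A^{T}=B\bar\xi B\xi$ (whose powers have the same trace as those of $A$) produces $\bar\xi B f'(A)\xi = \tfrac{1}{2}B^{-1}(\mathbbm{1}-g(A^{T}))$; the symmetry of $\xi$, $B$ and $\delta B$ is used throughout to convert every trace into the form $\mrm{Tr}(\,\cdot\,\delta B)$. Summing,
\begin{equation*}
\delta\rho(A) = \mrm{Tr}\Bigl(\bigl[B^{-1}-\tfrac{1}{2}\bigl(g(A)B^{-1} + B^{-1}g(A^{T})\bigr)\bigr]\delta B\Bigr).
\end{equation*}

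Two further integrations by parts convert $\delta\m H$ into an integral against $\delta u$; because $\partial_a\partial_b$ is symmetric in $a,b$ and $B^{-1}g(A^{T}) = (g(A)B^{-1})^{T}$, the divergences $(g(A)B^{-1})^{ab}_{,ab}$ and $(B^{-1}g(A^{T}))^{ab}_{,ab}$ coincide, giving
\begin{equation*}
\delta\m H = \tfrac{1}{2}\int_{\bb T^n}\Bigl((B^{-1})^{ab}_{,ab}-\bigl(g(A)B^{-1}\bigr)^{ab}_{,ab}\Bigr)\,\delta u\, d\mu.
\end{equation*}
The $(B^{-1})^{ab}_{,ab}$ contributions from $\delta\m F$ and $\delta\m H$ cancel, leaving
\begin{equation*}
\delta\widehat{\m F} = -\tfrac{1}{2}\int_{\bb T^n}\bigl(g(A)B^{-1}\bigr)^{ab}_{,ab}\,\delta u\, d\mu,
\end{equation*}
whose vanishing for every admissible $\delta u$ is exactly $\bigl((\mathbbm{1}-\xi D^2u\,\bar\xi D^2u)^{1/2}(D^2u)^{-1}\bigr)^{ab}_{,ab}=0$, the real moment map equation. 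The one genuine subtlety is the identity $2xf'(x)=1-\sqrt{1-x}$, which is what collapses the variation into the clean square-root form; everything else is trace cyclicity, the symmetries of $\xi$, $B$ and $\delta B$, and integration by parts on $\bb T^n$.
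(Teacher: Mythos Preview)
Your proof is correct and follows the same overall scheme as the paper's: differentiate the spectral trace $\rho(A)=\mrm{Tr}\,f(A)$ along a variation of $u$, then integrate by parts on the torus. The paper writes $f'(A)$ as $\check{\alpha}=\tfrac{1}{2}\bigl(\mathbbm{1}+(\mathbbm{1}-\xi G\bar\xi G)^{1/2}\bigr)^{-1}$ and uses the conjugation identities of Lemma~\ref{lemma:reale} (namely $\check{\alpha}\,\xi G=\xi G\,\bar{\check{\alpha}}$) to recognise the two halves of $\delta A$ as complex conjugates, obtaining $\partial_t\rho=2\,\Re\,\mrm{Tr}(\check{\alpha}\,\xi G\bar\xi\,\dot G)$; the Euler--Lagrange equation then appears in the form~\eqref{eq:HcscKSymplRealEq}, and the square-root rewriting is borrowed from the end of the proof of Theorem~\ref{thm:HcscKSymplCoordThm}. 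You instead apply the scalar identity $2xf'(x)=1-\sqrt{1-x}$ immediately, so that each half of $\mrm{Tr}(f'(A)\,\delta A)$ is already in square-root form, and you merge the two halves via the transpose relation $B^{-1}g(A^{T})=(g(A)B^{-1})^{T}$ together with the symmetry of $\partial_a\partial_b$, rather than via complex conjugation. The two symmetries are equivalent here because $B$ is real and $\bar A=B^{-1}A^{T}B$, so $\overline{g(A)B^{-1}}=B^{-1}g(A^{T})$; in particular your final expression $(g(A)B^{-1})^{ab}_{,ab}$ is automatically real, matching the paper's Lemma~\ref{lemma:reale}. Your route is marginally more direct, since it avoids quoting that lemma and lands on the square-root form without the intermediate step~\eqref{eq:HcscKSymplRealEq}; the paper's route has the advantage of making the connection to the operator $\check{\alpha}$ (and hence to the explicit moment-map computation of Section~\ref{sec:real_mm}) transparent.
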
  
Our main application of this variational characterisation is a uniqueness result, which relies in turn on a key convexity property.

\begin{theorem}\label{thm:convexity}
Fix a Higgs tensor~$\xi$. The periodic \textit{HK}-energy, regarded as a functional on symplectic potentials, is convex along linear paths in 
\begin{equation*}
\mathcal{A}(\xi)=\set*{u\mbox{ symplectic potential}\tc\SpecRad(\xi\,D^2u\,\bar{\xi}\,D^2u)<1}.
\end{equation*}
\end{theorem}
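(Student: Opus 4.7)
My plan is to compute the second variation of $\widehat{\m{F}}$ along a linear path and reduce its non-negativity to a pointwise matrix inequality on $\mathbb{T}^n$. By Proposition~\ref{proposition:HKEulLagr}, integrating by parts twice writes the first variation as
\[\delta\widehat{\m{F}}(u)\cdot\delta u = -\tfrac{1}{2}\int_{\mathbb{T}^n}\mathrm{Tr}\bigl(T[u]\,D^2\delta u\bigr)\,d\mu,\qquad T[u]=S\,P^{-1},\]
where $P:=D^2 u$ and $S:=(\mathbbm{1}-\xi P\bar\xi P)^{1/2}$ is the square root appearing in~\eqref{eq:HcscK_squareroot}. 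Convexity along a linear path $u_t=u+t\dot u$ then follows from the pointwise inequality $-\mathrm{Tr}(\dot T\,D^2\dot u)\geq 0$ with $\dot T=\dot S\,P^{-1}-S\,P^{-1}\dot P\,P^{-1}$, $\dot P:=D^2\dot u$, and $\dot S$ the unique solution of the Sylvester equation $\dot S\,S+S\,\dot S=-\dot A$, $\dot A:=\xi\dot P\,\bar\xi P+\xi P\,\bar\xi\dot P$.

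Next I would exploit the Takagi-type factorisation induced by the hypothesis $u\in\m{A}(\xi)$. Writing $Y:=P^{1/2}\xi P^{1/2}$, which is complex symmetric so $Y^*=\bar Y$, one has $P^{1/2}AP^{-1/2}=Y\bar Y=YY^*$, a Hermitian positive semi-definite matrix with spectrum in $[0,1)$; consequently $\tilde S:=P^{1/2}SP^{-1/2}=(\mathbbm{1}-YY^*)^{1/2}$ is Hermitian positive definite. Substituting $R:=P^{-1/2}\dot P\,P^{-1/2}$ (real symmetric) and conjugating by $P^{1/2}$ transforms the pointwise target into
\[-\mathrm{Tr}(\dot T\,\dot P) = \mathrm{Tr}(\tilde S\,R^2)-\mathrm{Tr}(\tilde{\dot S}\,R),\qquad \tilde{\dot S}\,\tilde S+\tilde S\,\tilde{\dot S} = -(YRY^*+YY^*R).\]
Passing to the unitary eigenbasis of $\tilde S$ (which simultaneously diagonalises $YY^*$ with eigenvalues $\lambda_i=1-\sigma_i^2$, $\sigma_i\in(0,1]$), the Sylvester solution is given entrywise by $\tilde{\dot S}_{ij}=-(YRY^*+YY^*R)_{ij}/(\sigma_i+\sigma_j)$, and a short calculation reduces the desired pointwise inequality to
\[\sum_{i,j}\frac{1+\sigma_i\sigma_j}{\sigma_i+\sigma_j}\,|R_{ij}|^2+\sum_{i,j}\frac{(YRY^*)_{ij}\,\overline{R_{ij}}}{\sigma_i+\sigma_j}\geq 0.\]

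As a sanity check, in dimension $n=1$ all quantities commute and the expression collapses to $\dot p^2/(p^2\sqrt{1-|\xi|^2 p^2})\geq 0$, reproducing the expected formula; the first summand is always manifestly non-negative as a weighted Frobenius norm of $R$. The main obstacle is to establish the full displayed inequality in dimension $n\geq 2$: the Takagi decomposition $Y=U\Sigma U^T$ is a congruence but not a unitary similarity, so $Y$ is not diagonal in the $YY^*$-basis and the mixed sum is not entrywise sign-definite. My two-pronged plan is (i) to use the Lyapunov integral representation $\tilde{\dot S}=-\int_0^\infty e^{-t\tilde S}(YRY^*+YY^*R)\,e^{-t\tilde S}\,dt$, reducing the claim to an operator positivity statement under the integrand for each $t>0$; and (ii) to apply a Cauchy--Schwarz estimate in the weighted sesquilinear form $\langle X,Z\rangle_\sigma:=\sum_{i,j}X_{ij}\overline{Z_{ij}}/(\sigma_i+\sigma_j)$, using $YY^*\preceq\mathbbm{1}$ to absorb the indefinite mixed term into the positive first sum. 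Once the pointwise inequality is established, integrating over $\mathbb{T}^n$ yields the convexity of $\widehat{\m{F}}$ along the linear path.
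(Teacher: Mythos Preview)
Your reduction is correct and is a genuinely different route from the paper's. Differentiating $T[u]=SP^{-1}$ directly and using the Sylvester equation for $\dot S$ bypasses the eigenvalue-perturbation machinery (Magnus's second-derivative formula, Moore--Penrose inverses) that the paper relies on in Section~\ref{ConvexitySec}. After your conjugation by $P^{1/2}$ the computation you record is accurate: with $\tilde S=(\mathbbm{1}-YY^*)^{1/2}$ and $\tilde{\dot S}=-\mathcal L^{-1}(YRY^*+YY^*R)$ one indeed gets the displayed weighted sum, and the symmetrisation yielding the coefficient $(1+\sigma_i\sigma_j)/(\sigma_i+\sigma_j)$ is right.

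The gap is that you have not proved the pointwise inequality; you only outline two plans. Plan~(ii) does not close as stated. Cauchy--Schwarz in $\langle\,\cdot\,,\,\cdot\,\rangle_\sigma$ reduces the question to $\norm{YRY^*}_\sigma\leq\norm{R}_\sigma$, but the intertwining relation $\tilde S Y=Y\tilde S'$ with $\tilde S'=(\mathbbm{1}-Y^*Y)^{1/2}$ shows that $R\mapsto YRY^*$ converts the $\tilde S$-Lyapunov weights into $\tilde S'$-weights; since $\tilde S$ and $\tilde S'$ have the same eigenvalues but different eigenbases, the two weighted norms are \emph{not} comparable by $\norm{Y}_{\mathrm{op}}^2$ alone. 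Plan~(i) faces the same obstruction inside the integral. More importantly, your framework does not isolate the structural feature that actually forces positivity: the fact that $R$ arises from a \emph{real} symmetric matrix, not an arbitrary Hermitian one.

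For comparison, the paper's argument proceeds differently. It expands the second variation via the spectral representation $\rho=\mrm{Tr}\,\tilde f(\xi G\bar\xi G)$ into three pieces (equation~\eqref{eq:HK_secondvar}), drops the manifestly positive Hessian-of-$\rho$ term, and analyses the remaining two in the Takagi basis $Q$ (so $\xi G\bar\xi G=Q\Delta Q^{-1}$ rather than your $P^{1/2}$-conjugation). The decisive step is to write the Hermitian matrix $R=Q^{\transpose}\dot G\bar Q$ as $A+\I B$ with $A$ real symmetric and $B$ real skew; after an explicit but elementary regrouping, the coefficient of each $(A^i_j)^2$ and $(B^i_j)^2$ becomes a closed-form nonnegative expression in $\delta_i,\delta_j$. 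That real/imaginary split---which encodes the reality of $\dot G$---is what makes the coefficients computable, and it is exactly the ingredient your two plans do not exploit. If you want to salvage your approach, the natural move is to pass from your $P^{1/2}$-basis to the Takagi frame of $Y$ (so that $Y$ becomes $D\,W$ with $W=V^{\transpose}V$ unitary) and then perform the analogous $A+\I B$ decomposition there; this should recover the paper's coefficients without the detour through eigenvalue perturbation.
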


\begin{remark}\label{remark:A_convex}
Note that in general the set~$\mathcal{A}(\xi)$ is not convex. However if we assume that the real and imaginary parts of~$\xi$ are positive or negative semidefinite, then the subset
\begin{equation*} 
\mathcal{A}'(\xi) \subset \mathcal{A}(\xi)
\end{equation*}
consisting of all symplectic potentials~$u$ such that
\begin{equation*}
\norm{\xi}^2_u=\operatorname{Tr}(\xi\,D^2u\,\bar{\xi}\,D^2u)<1
\end{equation*}
is convex. It is enough to show this when the real and imaginary parts of~$\xi$ are definite, and then this follows from the fact that with our assumption~$\norm{\xi}^2_{u}$ is a convex function, along a linear path~$u_t$ in~$\m{A}(\xi)$. Indeed we have  
\begin{align*}
&\frac{d^2}{dt^2}\norm{\xi}^2_{u_t} = 2\mrm{Tr}(\xi D^2\dot{u}_t\,\bar{\xi}D^2\dot{u}_t).  
\end{align*}
If~$\Re(\xi)$ is definite, then~$\Re(\xi) D^2\dot{u}_t$ is similar to a symmetric matrix and so it has real spectrum, thus
\begin{equation*}
\mrm{Tr}(\Re(\xi) D^2\dot{u}_t \Re(\xi) D^2\dot{u}_t) = \mrm{Tr}(\Re(\xi) D^2\dot{u}_t)^2 \geq 0.
\end{equation*}
The same inequality holds for the imaginary part~$\Im(\xi)$, and both conditions together imply~$\mrm{Tr}(\xi D^2\dot{u}_t\,\bar{\xi}D^2\dot{u}_t) \geq 0$. 

A similar computation shows that~$\mathcal{A}'(\xi)$ is also convex in the special case when~$M$ is a surface and~$\det(\xi) = 0$. We will use this fact in the proofs of Theorems~\ref{HcscK1dThm} and~\ref{LowRankThm}. 
\end{remark}

\begin{corollary} If the real and imaginary parts of~$\xi$ are semidefinite, then the extremal points~$u$ of the \textit{HK}-energy in~$\mathcal{A}'(\xi)$ are minima, and are in fact unique up to constants.
\end{corollary}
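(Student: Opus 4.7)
The plan is to assemble the convexity of the domain $\m{A}'(\xi)$ from Remark~\ref{remark:A_convex}, the convexity of the HK-energy along linear paths (Theorem~\ref{thm:convexity}), and the variational characterisation in Proposition~\ref{proposition:HKEulLagr}, and then upgrade convexity to strict convexity modulo the natural affine ambiguity of symplectic potentials by isolating the contribution of $\m{F}$.

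First, under the semidefiniteness hypothesis on $\Re(\xi), \Im(\xi)$, the set $\m{A}'(\xi)$ is a convex subset of the space of symplectic potentials. Since $\m{A}'(\xi) \subset \m{A}(\xi)$, Theorem~\ref{thm:convexity} gives convexity of $\widehat{\m{F}}(\cdot, \xi)$ along every linear path in $\m{A}'(\xi)$, and Proposition~\ref{proposition:HKEulLagr} identifies extremal points of $\widehat{\m{F}}(\cdot, \xi)$ with solutions of the real moment map equation in~\eqref{eq:HcscK_squareroot}. A standard convex-analysis argument then shows that any such extremum is a global minimum of $\widehat{\m{F}}(\cdot, \xi)$ on $\m{A}'(\xi)$.

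For uniqueness, suppose $u_0, u_1 \in \m{A}'(\xi)$ are both minima. The segment $u_t = (1-t) u_0 + t u_1$ stays in $\m{A}'(\xi)$ by convexity of the domain, and $\widehat{\m{F}}(u_t, \xi)$ is a convex function of $t \in [0,1]$ attaining its minimum at both endpoints, so it must be constant in $t$; in particular $\frac{d^2}{dt^2} \widehat{\m{F}}(u_t, \xi) \equiv 0$. The classical computation for the Mabuchi-type term $\m{F}$ gives, writing $\dot u = u_1 - u_0$,
\begin{equation*}
\frac{d^2}{dt^2} \m{F}(u_t) = \frac{1}{2} \int_{\bb{T}^n} \mrm{Tr}\bigl((D^2 u_t)^{-1} D^2 \dot u\, (D^2 u_t)^{-1} D^2 \dot u\bigr)\, d\mu,
\end{equation*}
whose integrand equals $\mrm{Tr}(C^2)$ for the symmetric matrix $C = (D^2 u_t)^{-1/2} D^2 \dot u\, (D^2 u_t)^{-1/2}$, hence is nonnegative and vanishes pointwise iff $D^2 \dot u \equiv 0$. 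To conclude that $D^2(u_1 - u_0) \equiv 0$, so that $u_1 - u_0$ is an affine function of $y$ (the natural ambiguity of symplectic potentials, loosely called ``constants'' in the statement), I would extract from the proof of Theorem~\ref{thm:convexity} that the Hessian of $\widehat{\m{F}}$ splits as a sum of two separately nonnegative pieces, one of which is the Hessian of $\m{F}$ above. Granting this, the vanishing of the sum forces the vanishing of each piece, hence $D^2 \dot u \equiv 0$ along the path.

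The main obstacle is precisely this last point: separate nonnegativity of the contribution of $\m{H}$ to the second variation of $\widehat{\m{F}}$ along linear paths in $\m{A}'(\xi)$ is stronger than the plain convexity of the sum asserted in Theorem~\ref{thm:convexity}. I would expect to obtain it by a Hessian computation in the spirit of the one in Remark~\ref{remark:A_convex} for $\norm{\xi}_u^2$, now combining the convexity of the spectral function $f(x) = 1 - (1-x)^{1/2} + \log(1 + (1-x)^{1/2})$ with the semidefiniteness of $\Re(\xi)$ and $\Im(\xi)$, which as in the remark makes the quadratic form $\mrm{Tr}(\xi D^2 \dot u\, \bar\xi D^2 \dot u)$ nonnegative.
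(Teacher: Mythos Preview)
Your reduction to strict convexity modulo $D^2\dot u=0$ is exactly right, and the first half of the argument (extrema are minima) is fine. The gap is in your plan for strict convexity: you propose to split $\widehat{\m F}=\m F+\m H$ and show each Hessian is separately nonnegative. That is false. The paper states this explicitly in the Remark immediately following the Corollary: even under the semidefiniteness hypothesis on $\Re(\xi),\Im(\xi)$, the Biquard--Gauduchon functional $\m H(u,\xi)$ is \emph{not} convex in $u$; the second piece of the Hessian can be negative, and it is $\m F$ that compensates. So the route via separate convexity of $\m H$, and in particular the hoped-for argument modelled on Remark~\ref{remark:A_convex} using $\mrm{Tr}(\xi D^2\dot u\,\bar\xi D^2\dot u)\geq 0$, cannot close.

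What actually works, and what the paper does, is to read off \emph{strict} convexity of $\widehat{\m F}$ (modulo $D^2\dot u=0$) directly from the proof of Theorem~\ref{thm:convexity}, not from its statement. The second variation~\eqref{eq:HK_secondvar} is a sum of three integrals: the third is $\geq 0$ by convexity of $\rho$, and the detailed computation in Section~4.3 shows that the sum of the first two (the $\m F$-term \emph{together with} the potentially negative $D\rho\cdot\partial_t^2\delta$ term) is pointwise strictly positive unless the Hermitian matrix $R=Q^\transpose\dot G\,\bar Q$ vanishes, i.e.\ unless $\dot G=0$. Thus $\partial_t^2\widehat{\m F}(u_t,\xi)=0$ forces $D^2(u_1-u_0)\equiv 0$, which is the uniqueness statement (up to the affine ambiguity you already noted). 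The semidefiniteness of $\Re(\xi),\Im(\xi)$ plays no role in this strictness; it is used only to guarantee that $\m A'(\xi)$ is convex, so that the linear path $u_t$ stays in the domain.
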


\begin{remark}
It is important to point our that, even under positivity assumptions on~$\xi$, the Biquard-Gauduchon functional~$\m{H}(u, \xi)$ is not, in general, convex with respect to~$u$. The crucial point is that, as we will show, the periodic~$K$-energy~$\m{F}(u)$ compensates this lack of convexity.
\end{remark}

We now turn to existence results for the system~\eqref{eq:HcscK_squareroot}. First, the complex moment map equation~\eqref{eq:HcscK_squareroot} has already been studied by Donaldson (see~\cite[\S$7.2$]{Donaldson_scalar}) in a related context.
\begin{lemma}[\cite{Donaldson_scalar}]\label{lemma:complex_mm_sol}
On any convex domain in~$\bb{R}^n$, the general solution of the equation~$\xi^{ab}_{,ab}=0$ is given by
\begin{equation*}
\xi^{ab}=\diff_cT^{abc}+\diff_cT^{bac}
\end{equation*}
for any tensor~$T^{abc}$ that is anti-symmetric in~$b$,~$c$.
\end{lemma}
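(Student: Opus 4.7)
My plan is to prove both directions of the equivalence, with the ``only if'' direction being a twofold application of the Poincaré lemma on a convex (hence star-shaped) domain.

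The ``if'' direction is immediate and I would dispatch it first: if $T^{abc}=-T^{acb}$, then $\xi^{ab}:=\partial_cT^{abc}+\partial_cT^{bac}$ is manifestly symmetric in $a,b$, and
\begin{equation*}
\partial_a\partial_b\xi^{ab}=\partial_a\partial_b\partial_cT^{abc}+\partial_a\partial_b\partial_cT^{bac}=0,
\end{equation*}
since in each summand the symmetric pair $\partial_b\partial_c$ (respectively $\partial_a\partial_c$) is contracted against an antisymmetric pair of indices of $T$.

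For the converse, the key input is the vector-valued Poincaré lemma on a convex domain $\Omega\subset\bb{R}^n$: any smooth vector field $V^a$ on $\Omega$ with $\partial_aV^a=0$ admits a ``vector potential'' $V^a=\partial_bA^{ab}$ with $A^{ab}=-A^{ba}$ (dually, every closed $(n-1)$-form on a star-shaped domain is exact). I would apply this twice. First, the hypothesis $\partial_a\partial_b\xi^{ab}=0$ says that $V^a:=\partial_b\xi^{ab}$ is divergence-free, so there exists an antisymmetric tensor $A^{ab}=-A^{ba}$ with $\partial_b\xi^{ab}=\partial_cA^{ac}$. Consequently, for each fixed $a$, the vector field $b\mapsto\xi^{ab}-A^{ab}$ is itself divergence-free, and a second application (with $a$ as a parameter, which is harmless since the Poincaré homotopy operator on a star-shaped domain is linear and preserves smooth dependence on parameters) yields a tensor $B^{abc}$, antisymmetric in $b,c$, such that
\begin{equation*}
\xi^{ab}=A^{ab}+\partial_cB^{abc}.
\end{equation*}

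The final step is to use the symmetry of $\xi$ to eliminate $A$. Swapping $a\leftrightarrow b$ and using $A^{ba}=-A^{ab}$ gives $\xi^{ab}=\xi^{ba}=-A^{ab}+\partial_cB^{bac}$, so
\begin{equation*}
A^{ab}=\tfrac12\partial_c\bigl(B^{bac}-B^{abc}\bigr),
\end{equation*}
and substituting back produces the symmetric expression
\begin{equation*}
\xi^{ab}=\tfrac12\partial_c\bigl(B^{abc}+B^{bac}\bigr).
\end{equation*}
Setting $T^{abc}:=\tfrac12 B^{abc}$, which inherits the antisymmetry in $b,c$, yields the desired form. The only real subtlety is the parametric application of the Poincaré lemma in the second step and the bookkeeping of index symmetries; the algebraic elimination of $A$ in the last step is automatic once one observes that the antisymmetric part of any expression $A^{ab}+\partial_cB^{abc}$ is forced to be exactly $\partial_c$ of the antisymmetrisation of $B$ in $a,b$.
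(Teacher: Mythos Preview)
Your proof is correct. The paper itself does not supply a proof of this lemma: it is quoted from \cite{Donaldson_scalar} and used as a black box, so there is no ``paper's own proof'' to compare against. Your argument---the easy verification for the ``if'' direction, followed by two successive applications of the Poincar\'e lemma and a symmetrisation to absorb the antisymmetric tensor $A^{ab}$---is the standard route and would serve perfectly well as the omitted proof.
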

We can use Lemma~\ref{lemma:complex_mm_sol} to obtain periodic solutions of the complex moment map equation for the system~\eqref{eq:HcscK_squareroot}: it is sufficient to choose a tensor field~$T^{abc}$ for which~$\diff_cT^{abc}$ is periodic. 

In order to formulate an existence result for the real moment map equation, recall that~$\norm{\xi}_{u_0}$ denotes the pointwise norm of the Higgs tensor with respect to the flat K\"ahler metric~$g_{J_0}$.
\begin{theorem}\label{QuantPertThm} There is a constant~$K > 0$, depending only on well-known elliptic estimates on the real torus~$\mathbb{T}^n$ with respect to the flat metric~$g_{J_0}$, such that if~$\xi$ satisfies
$\norm{\xi}_{u_0} < K$
then there exists a solution~$u$ to the real moment map equation in~\eqref{eq:HcscK_squareroot}. If the real and imaginary part of~$\xi$ are semidefinite then~$u$ is unique up to an additive constant.
\end{theorem}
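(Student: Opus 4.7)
My plan is to apply the implicit function theorem to the real moment map equation near the flat reference point $(u_0, 0)$, with $u_0(y) = \tfrac{1}{2}\abs{y}^2$, and then invoke the convexity result Theorem~\ref{thm:convexity} for the uniqueness claim under the semidefiniteness assumption on~$\xi$. Concretely, regard a symplectic potential~$u$ close to~$u_0$ as~$u = u_0 + \phi$ with~$\phi$ a~$\mathbb{T}^n$-periodic function of mean zero in some H\"older class~$C^{k+4,\alpha}$, and regard the Higgs tensor~$\xi$ as a~$\mathbb{T}^n$-periodic complex symmetric matrix-valued function of class~$C^{k,\alpha}$. Define
\begin{equation*}
\mathcal{N}(\phi, \xi) := \diff_a\diff_b \left[\left(\mathbbm{1} - \xi\,D^2 u\,\bar\xi\,D^2 u\right)^{1/2}(D^2 u)^{-1}\right]^{ab},
\end{equation*}
on a neighbourhood of~$(0,0)$ where~$u$ is strictly convex and~$\SpecRad(\xi D^2 u \bar\xi D^2 u) < 1$. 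Since~$\mathcal{N}$ is the double divergence of a symmetric tensor, its image consists of mean-zero~$C^{k,\alpha}$ functions on~$\mathbb{T}^n$. We have~$\mathcal{N}(0,0) = 0$, and a direct computation, using that both~$D_\xi(\xi\bar\xi)\vert_{\xi=0}$ and the~$u$-derivative of~$\xi D^2 u \bar\xi D^2 u$ at~$\xi = 0$ vanish, gives
\begin{equation*}
D_\phi\mathcal{N}\big|_{(0,0)}(\psi) = \diff_a\diff_b(-\psi_{,ab}) = -\Delta^2 \psi,
\end{equation*}
where~$\Delta$ is the flat Laplacian on~$\mathbb{T}^n$.

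The bi-Laplacian~$-\Delta^2$ is a topological isomorphism from mean-zero~$C^{k+4,\alpha}$ functions to mean-zero~$C^{k,\alpha}$ functions on~$\mathbb{T}^n$ by standard elliptic Schauder theory; this is the ``well-known elliptic estimate'' mentioned in the statement, and the constant~$K$ should be produced from the norm of~$(-\Delta^2)^{-1}$ and the size of the quadratic (and higher) remainder of~$\mathcal{N}$. The implicit function theorem then yields a smooth map~$\xi \mapsto \phi(\xi)$, defined for~$\|\xi\|$ less than a certain threshold, such that~$\mathcal{N}(\phi(\xi),\xi)=0$ and~$\phi(0)=0$; for this threshold small enough, $u = u_0 + \phi(\xi)$ is strictly convex and the pair~$(u,\xi)$ lies in the admissible region~$\m{A}(\xi)$. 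For the uniqueness claim, assume~$\Re(\xi)$ and~$\Im(\xi)$ are semidefinite. Then by Proposition~\ref{proposition:HKEulLagr}, solutions correspond to critical points of the periodic HK-energy~$\widehat{\m{F}}(\cdot,\xi)$; by Theorem~\ref{thm:convexity} and Remark~\ref{remark:A_convex}, $\widehat{\m{F}}(\cdot,\xi)$ is convex on the convex subset~$\m{A}'(\xi) \subset \m{A}(\xi)$. Convexity together with strict convexity of the leading~$-\log\det D^2 u$ term of~$\m{F}$ forces any critical point in~$\m{A}'(\xi)$ to be the unique minimum, up to additive constants. For~$\xi$ small the solution~$u$ produced above lies in~$\m{A}'(\xi)$, so uniqueness applies.

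The main obstacle I anticipate is the slight mismatch between the pointwise bound~$\norm{\xi}_{u_0}<K$ quoted in the statement and the H\"older-type smallness that the implicit function theorem naturally delivers. My preferred reading is that the statement is a concise shorthand for a smallness condition in a suitable H\"older (or Sobolev) norm, with~$K$ depending on the norm of~$(-\Delta^2)^{-1}$ and on a quadratic remainder estimate for~$\mathcal{N}$. An alternative would be to upgrade a pointwise control of~$\xi$ to a H\"older control by using the explicit representation~$\xi^{ab} = \diff_c T^{abc} + \diff_c T^{bac}$ of Lemma~\ref{lemma:complex_mm_sol} together with elliptic bootstrap on the complex moment map equation, but this does not seem to be the route envisaged by the authors.
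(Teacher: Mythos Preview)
Your proposal is correct but takes a different route from the paper. You apply the implicit function theorem directly at the base point $(u_0, 0)$, computing the linearisation as $-\Delta^2$ and invoking Schauder theory for the bi-Laplacian. The paper instead runs a continuity method: it interpolates between Abreu's equation $u^{ab}_{,ab} = 0$ (solved by $u_0$) and the real moment map equation via the path $u^{ab}_{,ab} = A_t$, with $A_t$ equal to $t$ times the Higgs correction term~\eqref{nonlinearLHS}. Closedness comes from a chain of a priori estimates: the Feng--Sz\'ekelyhidi determinant bound (Proposition~\ref{DetEstimate}), Trudinger--Wang estimates for the linearised Monge--Amp\`ere equation applied to $w = (\det D^2 u)^{-1}$, Caffarelli's $C^{2,\alpha}$ estimate for the Monge--Amp\`ere equation $w\det D^2 u = 1$, and a Schauder bootstrap back to $C^{4,\alpha}$. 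The key observation is that all of these are controlled once $\norm{\xi}_{C^2}\norm{u}_{C^4}$ is bounded, so a contradiction argument closes the path. Openness still uses the implicit function theorem, but the paper identifies the linearisation with the Hessian of the \textit{HK}-energy and appeals to the strict convexity computations of Section~\ref{ConvexitySec} to see that it has trivial kernel at every point of the path, not just at $(u_0,0)$.

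Your argument is shorter and correctly isolates $-\Delta^2$ as the relevant linear operator at the reference point. The paper's approach, while more elaborate, makes the theorem's claim that ``$K$ depends only on well-known elliptic estimates'' literal: the constant is traced through the linearised Monge--Amp\`ere and Caffarelli estimates, which is exactly the content of the sentence following the theorem in the introduction. The continuity-method template also transfers with minimal change to the toric case (Theorem~\ref{QuantPertThmToric}), where one perturbs around a cscK potential rather than the flat metric and invokes the a priori estimates of~\cite{Li_toric_affinegeom}; a direct IFT at a single base point would be less natural there. Your observation about the mismatch between the pointwise bound $\norm{\xi}_{u_0}$ in the statement and the H\"older-type smallness the analysis actually needs is apt: the paper works with $\norm{\xi}_{C^2}$ throughout Section~\ref{QuantPertSec}. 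One minor bookkeeping point: since $\mathcal{N}$ takes two derivatives of the bracketed tensor, you need $\xi \in C^{k+2,\alpha}$ (not $C^{k,\alpha}$) to land in $C^{k,\alpha}$.
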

More precisely~$K$ depends only on standard Schauder estimates, on estimates for the linearised Monge-Amp\`ere equations, such as those in~\cite{TrudingerWang_MongeAmpere} Section 3.7, Corollary 3.2, and on Caffarelli's H\"older estimates for the real Monge-Amp\`ere equation. So Theorem~\ref{QuantPertThm} can be seen as a quantitative perturbation result around the locus~$\xi \equiv 0$, for which the unique solution is given by the flat metric~$u_0$, up to an additive constant. Of course this result is still rather implicit, and one would like to obtain a more concrete estimate. We do not know how to achieve this in general, but we discuss in detail the special case on a~$2$-dimensional torus when the Higgs tensor~$\xi$ depends only on a single variable, say~$y^1$, and satisfies~$\det(\xi) = 0$. We show that in this case the real moment map equation is ``integrable'', i.e. it reduces to an algebraic condition, and use this to prove an effective existence result.
\begin{theorem}\label{HcscK1dThm}
On the abelian surface~$\mathbb{C}^2/(\mathbb{Z}^2+i\mathbb{Z}^2)$, suppose the Higgs tensor~$\xi = \xi(y^1)$ solves the complex moment map equation and does not have maximal rank. Then either 
\begin{equation*}
\xi = \begin{pmatrix}
0 & 0\\ 0& \xi^{22}(y^1)
\end{pmatrix},
\end{equation*}
in which case the real moment map equation has the unique solution~$u = u_0$ (up to an additive constant), or 
\begin{equation*}
\xi = \begin{pmatrix} c & \xi^{12}(y^1)\\ \xi^{12}(y^1) & \frac{\left(\xi^{12}(y^1)\right)^2}{c}\end{pmatrix},\, c \in \bb{C}^*
\end{equation*}
and there is a unique solution~$u(y)$ up to constants to the real moment map equation provided 
\begin{equation*}
|\xi^{12}(y^1)|\leq |c|<\frac{3}{10}.
\end{equation*} 
In both cases~$\xi$ corresponds to an integrable deformation if and only if it is constant.  
\end{theorem}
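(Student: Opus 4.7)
I split the proof into a classification of admissible $\xi$, a case-by-case analysis of the real moment map equation, and the integrability claim.

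First, I classify the admissible $\xi$. Since $\xi=\xi(y^1)$, every partial derivative of $\xi$ in $y^2$ vanishes and so the complex moment map equation $\xi^{ab}_{,ab}=0$ collapses to $\partial_1^2\xi^{11}=0$. Thus $\xi^{11}$ is affine in $y^1$, and periodicity forces $\xi^{11}\equiv c\in\bb{C}$. Combined with $\det\xi=0$ this gives exactly the two cases of the statement: $c=0$ (yielding $\xi^{12}=0$ and $\xi^{22}(y^1)$ arbitrary), or $c\neq 0$ (yielding the rank-one form $\xi=c^{-1}vv^{\transpose}$ with $v=(c,\xi^{12}(y^1))^{\transpose}$).

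In Case 1 one checks by direct computation that for $u=u_0$ the matrix entering the real moment map equation equals $\operatorname{diag}(1,\sqrt{1-|\xi^{22}(y^1)|^2})$, which depends only on $y^1$ and therefore has vanishing double divergence. For uniqueness I would apply Proposition~\ref{proposition:HKEulLagr} and the convexity results of Theorem~\ref{thm:convexity} and Remark~\ref{remark:A_convex} (which applies since $M$ is a surface and $\det\xi=0$) inside the convex set $\m{A}'(\xi)$. Combining convexity with the $y^2$-translation invariance of the data, one can average any minimiser over $y^2$-translates and reduce to potentials of the form $u=\tfrac{1}{2}|y|^2+\phi(y^1)$. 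With this ansatz the equation becomes the ODE $(1/(1+\phi''(y^1)))''=0$, whose only periodic solutions have $\phi''$ constant; the normalisation of admissible perturbations of $u_0$ then rules out nonzero constants.

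Case 2 is the substantive part. The rank-one decomposition of $\xi$ lets me compute the square root explicitly on this rank-one perturbation of the identity: setting $\mu=v^{\transpose}D^2u\,\bar v\in\bb{R}_{>0}$ and $\lambda=\mu^2/|c|^2$, one obtains
\[
(\mathbbm{1}-\xi\,D^2u\,\bar{\xi}\,D^2u)^{1/2}(D^2u)^{-1} = (D^2u)^{-1}-\frac{1-\sqrt{1-\lambda}}{\mu}\,v\bar v^{\transpose}.
\]
Reducing to $u=\tfrac{1}{2}|y|^2+\phi(y^1)$ by the same averaging argument, all entries of this matrix depend only on $y^1$, so the real moment map equation becomes $(M^{11})''=0$, which by periodicity integrates to the algebraic condition $M^{11}(y^1)=A$ for a single constant $A$, namely
\[
\frac{1}{1+\phi''(y^1)} - \frac{|c|^2-|c|\sqrt{|c|^2-\mu^2}}{\mu} = A,\qquad \mu = |c|^2(1+\phi''(y^1))+|\xi^{12}(y^1)|^2.
\]
This is the ``integrability'' of the real moment map equation referenced in the theorem. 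I expect the main obstacle to be showing that under $|\xi^{12}|\leq|c|<3/10$ this implicit relation admits a unique admissible solution $\phi''(y^1)$ for a unique $A$ determined by the mean-zero normalisation on $\phi''$. The bound $3/10$ should come from the quantitative range within which one can verify strict monotonicity of $s\mapsto M^{11}$ on the admissible interval and close the matching condition for $A$ by continuity.

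Finally, the integrability claim follows from Proposition~\ref{proposition:integrability_xi}: integrability of $\alpha^{\vee}$ is equivalent to $D^2u\,\xi\,D^2u$ being the Hessian of a function with periodic Hessian. Since both $u$ and $\xi$ depend only on $y^1$, the product is a symmetric $2\times 2$ matrix depending only on $y^1$, and a short compatibility computation shows that such a matrix is a Hessian only if its $(1,2)$ and $(2,2)$ entries are constants in $y^1$. In Case 1 this forces $\xi^{22}$ to be constant, and in Case 2 the $(2,2)$ constraint forces $\xi^{12}$ constant (and then the $(1,2)$ constraint together with the normalisation forces $\phi''=0$), so in both cases $\xi$ is constant. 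The converse is immediate since any constant $\xi$ is the Hessian of a quadratic form.
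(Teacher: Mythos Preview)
Your classification of $\xi$, the treatment of Case 1, and the integrability argument are correct and essentially match the paper (the paper also invokes Theorem~\ref{thm:convexity} for uniqueness, noting that here $\det\xi=0$ on a surface forces $\m{A}(\xi)=\m{A}'(\xi)$, so convexity applies directly; your averaging step is a valid alternative but not needed, since strict convexity already pins down the minimiser among \emph{all} periodic perturbations of $u_0$). Your rank-one computation of the square root in Case 2 is also correct and leads to the same algebraic equation the paper obtains (their equation~\eqref{eq:real_mm_abelian_alt}).

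The genuine gap is in Case 2: you reduce to the algebraic relation $M^{11}(y^1)=A$ but do not prove that, under $|\xi^{12}|\le |c|<3/10$, an admissible $\phi''$ and a constant $A$ exist and are unique. Saying the bound ``should come from'' monotonicity and a matching argument is not a proof, and it is not obvious how $3/10$ would fall out of that scheme without doing the work. The paper's argument is different from what you sketch: it runs a continuity method in $t\in[0,1]$, replacing $F=\xi^{12}$ by $tF$, starting from the explicit solution $\phi=0$ at $t=0$. Openness is proved by computing the linearisation $\dot\phi\mapsto\bigl[\bigl(\tfrac{|c|^2}{\sqrt{1-p^2/|c|^2}(1+\sqrt{1-p^2/|c|^2})}-1\bigr)\dot\phi\bigr]''$ and checking its coefficient is bounded away from $0$ provided $|c|^2<\sqrt{\varepsilon}-\varepsilon$, where $p=(1+\phi)|c|^2+|F|^2$ satisfies a uniform nonsingularity bound $p^2\le(1-\varepsilon)|c|^2$. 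Closedness is proved by extracting from the integrated equation the a priori bounds $1-|c|\le 1+\phi\le 2+|c|$ (using that the integration constant $k$ satisfies $0\le k+1\le|c|$), which in turn force the nonsingularity bound to persist as long as $|c|^2(3+|c|)^2\le 1-\varepsilon$. Choosing $\varepsilon=1/100$ makes both constraints compatible for $|c|<3/10$. This quantitative bookkeeping is the heart of the proof and is missing from your proposal.
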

Finally we consider further the analogy between the equations~\eqref{eq:HcscK_squareroot} and the classical Hitchin equations for harmonic bundles~\cite{Hitchin_self_duality}, in the two-dimensional case. There are at least two features which carry over to the general situation of the HcscK equations on a complex surface. These are conveniently expressed in terms of the endomorphism~$A = \alpha^{\vee} = A^{1,0} + A^{0,1}$ of~$T_{\bb{C}}M$ dual to the cotangent vector~$\alpha$. 

On the one hand, there is an action of~$\operatorname{U}(1)$ on~$\cotJ$, dual to~$\theta \cdot A^{1,0} = e^{i \theta} A^{1,0}$, and this preserves the real and complex moment map equations, see Remark~\ref{U1Rmk}. In the special case of Theorem~\ref{thm:HcscKSymplCoordThm}, this is clear: the action maps~$\xi$ to~$e^{i\theta}\xi$, and the equations~\eqref{eq:HcscK_squareroot} are manifestly invariant.

Secondly, we can consider the map which associates to~$A$ the elementary symmetric polynomials in its eigenvalues. In general, one shows that the only non vanishing polynomials are 
\begin{equation*}
\sigma_2(A) = -\frac{1}{2}\mrm{Tr} A^2 = \norm*{A^{0,1}}^2_{g_{J}},\,\sigma_4(A) = \det(A) =\abs*{\det(A^{0,1})}^2. 
\end{equation*}
For our special cotangent pairs corresponding to~$(u, \xi)$, these are the functions
\begin{equation*}
\sigma_2(A) =\norm*{\xi}^2_u,\,\sigma_4(A) = \abs*{\det(\xi D^2 u)}^2.  
\end{equation*}
In general, for~$\varphi \in \operatorname{Ham}(M, \omega)$, these quantities transform as
\begin{equation*}
\varphi^*\sigma_i(A) = \sigma_i(\varphi^* A), 
\end{equation*}
but, in our special case, if we restrict to~$\mathbb{T}^2$-invariant tensors and to Hamiltonian symplectomorphisms commuting with the action of~$\mathbb{T}^2$, then we have  
\begin{equation*}
\varphi^*\sigma_i(A) = \sigma_i(A), 
\end{equation*}
so that it makes sense to consider the ``gauge invariant'' map given by
\begin{equation*}
\pi(A) =\left(\norm*{A^{0,1 }}^2_{g_{J}},\abs*{\det(A^{0,1})}^2\right).
\end{equation*}
Thus the special locus~$\det(\xi) = 0$ shares some features with the ``global nilpotent cone'', the zero fibre of the Hitchin system: in particular, it is the locus where the Higgs tensor is most degenerate. We prove a first structure result for this locus.
\begin{theorem}\label{LowRankThm}
Let~$\xi_0\in\bb{C}^{2\times 2}$ be a constant symmetric matrix such that~$\det(\xi_0) = 0$ and~$\SpecRad(\xi_0\bar{\xi_0}) < 1$. Nearby~$(u_0, \xi_0)$ the locus of solutions~$(u, \xi)$ to the two-dimensional periodic HcscK system satisfying~$\det(\xi) = 0$ is an infinite dimensional submanifold in the space of all solutions, provided~$\xi_0$ does not belong to an exceptional subset with empty interior. 
\end{theorem}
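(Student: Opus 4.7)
The plan is a two-step implicit function theorem argument: first, identify the full HcscK solution locus~$\m{M}$ near~$(u_0, \xi_0)$ as a Banach submanifold; second, show that the additional constraint~$\det(\xi) = 0$ cuts out a submanifold of~$\m{M}$ transversally, except for an exceptional subset of~$\xi_0$ of empty interior.

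For the first step, observe that the complex moment map equation~$\xi^{ab}_{,ab} = 0$ is a linear constant-coefficient PDE, so its solution set~$\m{X}$ in~$C^{k,\alpha}(\bb{T}^2, \mrm{Sym}^2\bb{C}^2)$ is a closed affine subspace. The real moment map operator, linearised at~$(u_0, \xi_0)$ in the~$u$-direction, is a fourth-order elliptic operator (ellipticity follows from the~$\SpecRad$ condition on~$\xi_0$); by the convexity of the HK-energy (Theorem~\ref{thm:convexity} together with the strict convexity embodied in its Corollary) and by self-adjointness from the variational characterisation (Proposition~\ref{proposition:HKEulLagr})—applicable here because~$\det \xi_0 = 0$ places us in the convex case of Remark~\ref{remark:A_convex}—its kernel and cokernel consist only of constants. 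The implicit function theorem then provides a unique~$u = u(\xi)$ modulo constants, smoothly depending on~$\xi \in \m{X}$ near~$\xi_0$, and so~$\m{M}$ is locally a Banach manifold modelled on~$\m{X}$.

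For the second step, write~$\xi_0 = v_0 \otimes v_0$ (possible up to sign for~$v_0 \neq 0$) and set~$v_0^\perp = (-v_0^2, v_0^1)$. A direct calculation gives
\begin{equation*}
d(\det)_{\xi_0}(\dot\xi) = \mrm{Tr}(\mrm{adj}(\xi_0)\,\dot\xi) = \dot\xi(v_0^\perp, v_0^\perp).
\end{equation*}
By Lemma~\ref{lemma:complex_mm_sol}, in dimension two one parameterises~$T_{\xi_0}\m{X}$ via vector fields~$\dot s \in C^{k+1,\alpha}(\bb{T}^2, \bb{C}^2)$ through~$\dot T^{abc} = \dot s^a\,\epsilon^{bc}$; a short computation then reduces the linearisation of the constraint to
\begin{equation*}
\dot s \,\longmapsto\, -2\, L_{v_0}(v_0^\perp \cdot \dot s), \qquad L_{v_0} = v_0^1 \partial_1 + v_0^2 \partial_2.
\end{equation*}
The map~$\dot s \mapsto v_0^\perp \cdot \dot s$ is manifestly surjective onto scalars with infinite-dimensional kernel, so the submersion property of the constraint reduces to surjectivity of the first-order operator~$L_{v_0}$ on mean-zero~$C^{k,\alpha}$ functions.

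This final Fourier-analytic question is the main obstacle and pinpoints the exceptional subset. In the Fourier basis on~$\bb{T}^2$, $L_{v_0}$ acts on the mode~$k \in \bb{Z}^2 \setminus \{0\}$ as multiplication by~$2\pi\I(v_0^1 k_1 + v_0^2 k_2)$, so invertibility on mean-zero Hölder (or Sobolev) functions is controlled by a Diophantine-type lower bound~$|v_0^1 k_1 + v_0^2 k_2| \geq C(1+|k|)^{-\tau}$ for some~$C, \tau > 0$. The exceptional subset consists precisely of those rank-one~$\xi_0$ whose factor~$v_0$ fails such a bound: it contains the resonant directions~$v_0^1/v_0^2 \in \bb{Q}$ (a countable union of proper complex-affine subspaces) and is nowhere dense in the rank-one locus. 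Outside it, $L_{v_0}$ is surjective with infinite-dimensional kernel, and the implicit function theorem produces the claimed infinite-dimensional Banach submanifold structure for~$\m{M} \cap \{\det \xi = 0\}$.
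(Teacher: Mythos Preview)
Your two-step implicit function theorem strategy matches the paper's: first solve $u = u(\xi)$ via invertibility of the linearised real moment map, then show that $\det = 0$ is transverse inside the solution space $\mathcal{S}$ of the complex moment map equation. The second step, however, is carried out differently. The paper Fourier-expands $\mathcal{S}$ directly (the constraint becomes $k^{\transpose}\xi_k k = 0$), computes $(d\det)_{\xi_0}$ mode by mode as a one-equation, two-unknown linear system in $\xi^{11}_k,\xi^{22}_k$, and identifies the exceptional set as those constant $\xi_0$ for which some mode equation degenerates (concretely, $\alpha,\gamma$ specific rational multiples of $\beta$). Your reduction via Donaldson's parametrisation to the constant-coefficient operator $L_{v_0}$ is more indirect but arrives at the same small divisor: writing $\xi_0 = v_0\otimes v_0$, one checks that both coefficients in the paper's mode-$k$ equation carry the common factor $v_0\cdot k$, so both analyses come down to bounding $|v_0\cdot k|$ from below. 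Your Diophantine formulation is arguably more careful than the paper's here, since mode-by-mode solvability alone does not guarantee that the solution lies in the right Sobolev class; note though that for $v_0^1/v_0^2 \notin \bb{R}$ one has $|v_0\cdot k| \geq c > 0$ uniformly and no Diophantine hypothesis is needed, while the real-ratio locus already has empty interior. Two minor repairs: Lemma~\ref{lemma:complex_mm_sol} is stated only on convex domains, and on $\bb{T}^2$ your parametrisation by \emph{periodic} $\dot s$ misses the constant modes of $\dot\xi$, which must be added separately to hit the constant part of $f$; and your appeal to Remark~\ref{remark:A_convex} in step one is unnecessary, since Theorem~\ref{thm:convexity} already gives strict convexity of the \textit{HK}-energy along linear paths for arbitrary $\xi$, not only those with $\det\xi=0$.
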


\subsection{The toric case}

We now come to the second topic of this article, toric manifolds. Consider a K\"ahler manifold~$(M^n,J,\omega)$ with a Hamiltonian action~$\bb{T}^n\curvearrowright M$, whose moment map~$\mu$ sends~$M$ to a convex polytope~$P\subseteq\bb{R}^n$ by the Atiyah-Guillemin-Sternberg Theorem~\cite{Atiyah_convexity,Guillemin_convexity}. It is well-know that~$P$ is a \emph{Delzant polytope}~\cite{Delzant_polytope}, and that any such polytope defines in turn a compact symplectic manifold~$(M_P,\omega_P)$, together with a Hamiltonian~$\bb{T}^n$-action on~$M_P$ such that~$(M_P,\omega_P)$ is equivariantly isomorphic to~$(M,\omega)$. The polytope defines also a standard compatible complex structure~$J_P$, but in general~$(M_P,J_P,\omega_P)$ will not be isomorphic to~$(M,J,\omega)$. For the general theory we refer to~\cite{Guillemin_toric,Guillemin_momentmaps} and~\cite{Abreu_toric}.

The moment map gives an alternative way to describe the symplectic structure on~$M$, since it establishes a~$\bb{T}^n$-equivariant isomorphism between~$(M^\circ,\omega)$ and~$(P^\circ\times\bb{T}^n,\omega_P)$, where~$M^\circ$ is the open subset of~$M$ on which the action is free, and~$\omega_P$ is the standard symplectic structure induced by the inclusion in~$\bb{R}^{2n}$. On the other hand, the manifold~$M^\circ$ is~$\bb{T}^n$-equivariantly biholomorphic to~$\bb{C}^n/2\pi\I\bb{Z}^n\cong\bb{R}^n+\I\bb{T}^n$, where the action is by translations on the~$\bb{T}^n$-factor. We consider the standard coordinates~${z}={x}+\I{w}$ on~$\bb{R}^n\times\bb{T}^n$. On~$P^\circ\times\bb{T}^n$ instead we consider coordinates~$({y},{w})$, with~${y}=\mu({x})$.

The symplectic form on~$M^\circ$ is given by~$4\I\diff\bdiff v$ for some~$\bb{T}^n$-invariant potential function~$v$, so that~$\omega=\I\diff_{x^a}\diff_{x^b}v\,\mrm{d}z^a\wedge\mrm{d}\bar{z}^b$, while~$J$ is just represented by the canonical matrix~$J({x},{w})=\begin{pmatrix}0 & -\mathbbm{1} \\ \mathbbm{1} & 0\end{pmatrix}$. On the other hand the symplectic structure on~$P^\circ\times\bb{T}^n$ induced by~$\omega$ via the moment map is the canonical symplectic form~$\sum_{i,j}\mrm{d}y^i\wedge\mrm{d}w^j$ on~$\bb{R}^{2n}$, while the complex structure~$J$ is described by a matrix~$J({y},{w})=\begin{pmatrix} 0 & -G^{-1} \\ G & 0\end{pmatrix}$. Since~$J$ is integrable, this matrix must be of the form~$G=\mrm{Hess}_{{y}}u$ for some potential~$u({y})$. Moreover the two coordinate systems and the two functions~$u$ and~$v$ are Legendre dual to each other, that is they satisfy 
\begin{equation*}
{y}=\diff_{{x}}v,\, 
{x}=\diff_{{y}}u,\,
u({y})+v({x})={x}\cdot{y}.
\end{equation*}
This means that the HcscK system~\eqref{eq:HcscK} can be expressed in the coordinates~$({y},{w})$ in a form similar to what was done for abelian varieties in~\eqref{eq:HcscK_squareroot}. The only difference is that~$\hat{s}$ does not vanish, in general, for a toric manifold, so the system of equations in the interior of the moment polytope becomes, setting~$C=4\hat{s}$,
\begin{equation}\label{eq:symplectic_HcscK_toric}
\begin{cases}
\xi^{ab}_{,ab}=0\\
\left(\left(\mathbbm{1}-\xi\,D^2u\,\bar{\xi}\,D^2u\right)^{\frac{1}{2}}D^2u^{-1}\right)^{ab}_{,ab}=-C.
\end{cases}
\end{equation}
The system should be solved for a potential~$u$ and a deformation~$\xi$ of the complex structure, satisfying appropriate conditions at the boundary of~$P$. 
\begin{remark}
We emphasise again that we do \emph{not} require~$\xi$ to correspond to an integrable deformation (whose class in~$H^1(TM) \cong \{0\}$ would necessarily vanish), but only to an almost K\"ahler deformation. Indeed we will show that an integrable solution of the complex moment map equation necessarily vanishes, see Corollary~\ref{cor:toric_integrability}. 
\end{remark}
The boundary conditions for~$u$ are well-understood from the work of Abreu and will be reviewed in Section~\ref{SympCoordSec}. Briefly, the symplectic potential must be of the form~$u=u_P+h$ for~$h\in\m{C}^\infty(P)$ and a canonical potential~$u_P\in\m{C}^\infty(P^\circ)$ that is singular at the boundary of~$P$. We will show that the boundary conditions for~$\xi$ are also written in terms of~$u_P$.
\begin{proposition}\label{proposition:toric_boundarycond}
A Higgs tensor~$\xi$ on the polytope~$P$ extends to a deformation of the complex structure on~$M$ if and only if~$D^2u_P\,\xi\,D^2u_P$ extends smoothly to~$P$.
\end{proposition}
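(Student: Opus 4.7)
The statement is local on $\partial P$, so I would analyze it in a neighborhood of a relative interior point of a codimension-$k$ face $F \subset P$. By the Delzant correspondence and an $SL(n,\mathbb{Z})$ change of coordinates, the $k$ facets containing $F$ have defining affine functions $\ell_i(y) = y^i - p^i$ for $i = 1, \ldots, k$, and a neighborhood of $F$ in $M$ is equivariantly biholomorphic to $\mathbb{C}^k \times (\mathbb{C}^*)^{n-k}$ with complex coordinates $\zeta^1, \ldots, \zeta^n$ obeying $|\zeta^i|^2/2 = \ell_i(y)$ for $i \le k$ and a smooth identification with $(y^i, w^i)$ for $i > k$. From Guillemin's formula $u_P = \frac{1}{2}\sum_j \ell_j \log \ell_j$ one reads off that $D^2 u_P$ decomposes as a singular diagonal part, with entries $\frac{1}{2\ell_i}$ for $i \le k$ and zero otherwise, plus a smooth remainder $S(y)$ extending up to $F$.

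The central computation is the change of basis between the symplectic-coordinate $(1,0)$-frame and the holomorphic frame $\partial_{\zeta^a}$ near $F$. Inverting the Jacobian of $\zeta^a = \sqrt{2\ell_a}\,e^{iw^a}$ for $a \le k$, one finds
\begin{equation*}
\partial_{y^a} = \tfrac{1}{\bar\zeta^a}\partial_{\zeta^a} + \tfrac{1}{\zeta^a}\partial_{\bar\zeta^a}, \qquad \partial_{w^a} = i\zeta^a \partial_{\zeta^a} - i\bar\zeta^a \partial_{\bar\zeta^a},
\end{equation*}
and combining this with the leading behavior of $D^2 u$ reveals that the $(1,0)$-vectors $e_a = \partial_{y^a} - i(D^2 u)_{ab}\partial_{w^b}$ satisfy $e_a = (2/\bar\zeta^a)\partial_{\zeta^a}$ up to smooth corrections, for $a \le k$. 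Hence a $\mathbb{T}^n$-invariant Higgs tensor $\xi = \xi^{ab}_{\mathrm{symp}}\, e_a \otimes e_b$ is expressed in the holomorphic frame by
\begin{equation*}
\xi^{ab}_\zeta = \tfrac{4}{\bar\zeta^a\bar\zeta^b}\,\xi^{ab}_{\mathrm{symp}} + (\text{subleading}) \qquad (a, b \le k),
\end{equation*}
with analogous single-factor expressions when only one of $a, b$ is $\le k$ and a smooth change of basis when both exceed $k$. Smoothness and $\mathbb{T}^n$-equivariance of $\xi^{ab}_\zeta$, which force it to carry weight $+1$ in each slot $a \le k$, then translate into $\xi^{ab}_{\mathrm{symp}}$ being divisible by $\ell_a$ in each index $a \le k$.

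Matching this with $D^2 u_P \cdot \xi \cdot D^2 u_P$ closes the argument. To leading order near $F$,
\begin{equation*}
\bigl(D^2 u_P \cdot \xi \cdot D^2 u_P\bigr)_{ab} \sim \tfrac{\xi^{ab}_{\mathrm{symp}}}{4\,\ell_a \ell_b} \qquad (a, b \le k),
\end{equation*}
with single-pole behavior for mixed indices and smoothness when both $a, b > k$. Therefore $D^2 u_P\, \xi\, D^2 u_P$ extends smoothly across $F$ iff $\xi^{ab}_{\mathrm{symp}}$ satisfies exactly the divisibility conditions derived in the previous paragraph, iff $\xi$ extends smoothly across $F$. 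Patching these local descriptions across all strata of $\partial P$ completes the proof. The main delicacy will be to ensure that the subleading smooth corrections to $D^2 u_P$ and to the change-of-basis matrix do not spoil the equivalence: because they enter as smooth multiplicative factors, they can only contribute less singular cross-terms to $D^2 u_P\, \xi\, D^2 u_P$, so the leading matching above indeed controls the full smoothness.
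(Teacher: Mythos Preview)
Your argument is correct, but it takes a genuinely different route from the paper's. The paper does not analyse the boundary strata directly at all: instead it first characterises the full tangent space $T_J\m{J}$ in symplectic coordinates, showing that every $\bb{T}^n$-invariant deformation $\dot{J}$ on $M$ is encoded by a pair of \emph{smooth} symmetric matrix-valued functions $A,B\in\m{C}^\infty(P)$ via the block matrix
\[
\dot{J}=\begin{pmatrix} G^{-1}A & G^{-1}BG^{-1}\\ B & -AG^{-1}\end{pmatrix},\qquad G=D^2u.
\]
It then computes the $(0,1)$-part of $\dot{J}$ in the holomorphic frame $\partial_{z^a}=\tfrac{1}{2}(G^{ab}\partial_{y^b}-\I\partial_{w^a})$ and finds the clean identity $\xi=G^{-1}(A-\I B)G^{-1}$, so that $D^2u\,\xi\,D^2u=A-\I B$ is smooth on $P$ exactly when $\xi$ comes from a global $\alpha\in T^*_J\m{J}$. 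The passage from $D^2u$ to $D^2u_P$ is then a one-line algebraic manipulation using $u=u_P+h$ and smoothness of $D^2u_P^{-1}$.

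Your approach is more elementary and self-contained: you bypass the structural description of $T_J\m{J}$ and instead check smoothness directly in the model coordinates $\zeta^a$ near each face, reducing the question to a divisibility condition on the components $\xi^{ab}$. This has the advantage of making the boundary behaviour completely explicit and of not invoking Abreu's theorem, at the cost of a longer computation and the need to track subleading terms. One small point worth tightening: you present $\xi$ as a contravariant tensor $\xi^{ab}_{\mrm{symp}}\,e_a\otimes e_b$, whereas in the paper's conventions $\xi^{ab}(y)=\varphi_{ab}(x)$ are the components of the \emph{covariant} quadratic differential $\varphi\in\mrm{Sym}^2({T^{1,0}}^*M)$ in the frame $dz^a$. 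The resulting divisibility condition is the same (since the singular factors $1/\bar\zeta^a$ and $1/\zeta^a$ have the same modulus), but it would be cleaner to phrase the frame change for $\varphi$ directly, which also makes the weight argument more transparent.
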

This condition on~$\xi$ allows us to get an integration by parts formula for the real moment map in~\eqref{eq:symplectic_HcscK_toric} on the Delzant polytope~$P$, in the spirit of~\cite[Lemma~$3.3.5$]{Donaldson_stability_toric}. This formula (see Lemma~\ref{lemma:intbyparts_realmm}) in turn allows us to get a variational characterization of the real moment map on toric manifolds. In order to write the relevant functional we introduce a measure~$\mrm{d}\sigma$ on the boundary of the polytope, which is the same defined by Donaldson in~\cite{Donaldson_stability_toric}. 
\begin{theorem}\label{thm:toricHKenergy}
Fix a Delzant polytope~$P$. For a a Higgs tensor~$\xi$, let~$\m{A}(\xi)$ be the space of symplectic potentials on~$P$ defined by
\begin{equation*}
\m{A}(\xi)=\set*{u=u_P+h\tc r(\xi\,D^2u\,\bar{\xi}\,D^2u)<1}.
\end{equation*}
The real moment map equation in~\eqref{eq:symplectic_HcscK_toric} is the Euler-Lagrange equation of the toric \textit{HK}-energy:
\begin{equation}\label{eq:toricHKenergy}
\begin{split}
\widehat{\m{F}}(u,\xi)=&\int_{\diff P}u\,\mrm{d}\sigma-\int_PC\,u\,\mrm{d}\mu-\int_P\log\mrm{det}\left(D^2u\right)\mrm{d}\mu\\&+\int_P\rho\left(\xi\,D^2u\,\bar{\xi}\,D^2u\right)\mrm{d}\mu.
\end{split}
\end{equation}
Moreover~$\widehat{\m{F}}(u,\xi)$ is convex along linear paths in~$\m{A}(\xi)$.
\end{theorem}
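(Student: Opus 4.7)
The proof splits naturally into the Euler--Lagrange computation and the convexity along linear paths. For the first, I would compute $\delta_u\widehat{\m{F}}$ term by term. The contributions from $\int_{\diff P}u\,\mrm{d}\sigma$ and $-\int_P C u\,\mrm{d}\mu$ are linear in $u$ and thus straightforward. The classical piece $-\int_P\log\det(D^2u)\,\mrm{d}\mu$ has first variation $-\int_P u^{ab}(\delta u)_{,ab}\,\mrm{d}\mu$, where $(u^{ab})=(D^2u)^{-1}$; applying the integration-by-parts formula of Lemma~\ref{lemma:intbyparts_realmm}, its boundary contribution combines with the variation of $\int_{\diff P}u\,\mrm{d}\sigma$ and cancels exactly as in Donaldson's treatment of the toric K-energy~\cite{Donaldson_stability_toric}, leaving the interior term $-\int_P u^{ab}_{,ab}\,\delta u\,\mrm{d}\mu$. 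In the case $\xi\equiv 0$ this already reproduces the toric cscK equation $u^{ab}_{,ab}=-C$.

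For the Biquard--Gauduchon term $\int_P\rho(\xi\,D^2u\,\bar{\xi}\,D^2u)\,\mrm{d}\mu$, I would use the spectral representation $\rho=\mrm{Tr}\,f(\alpha\bar\alpha)$ together with the explicit derivative $f'(x)=\frac{1}{2(1+\sqrt{1-x})}$. A direct chain-rule computation, after symmetrising the two terms $\xi\,D^2(\delta u)\,\bar\xi\,D^2u$ and $\xi\,D^2u\,\bar\xi\,D^2(\delta u)$ inside the trace, shows that the coefficient of $D^2(\delta u)$ assembles precisely into $(\mathbbm{1}-\xi D^2u\,\bar\xi\,D^2u)^{\frac{1}{2}}(D^2u)^{-1}-(D^2u)^{-1}$; this is essentially the same manipulation already carried out in the abelian setting in the proof of Theorem~\ref{thm:HcscKSymplCoordThm}. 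Applying Lemma~\ref{lemma:intbyparts_realmm} once more, the boundary contributions vanish thanks to Proposition~\ref{proposition:toric_boundarycond}, since the relevant matrix products $D^2u_P\,\xi\,D^2u_P$ and their square-root analogues extend smoothly to $\diff P$. Combining with the previous paragraph yields precisely~\eqref{eq:symplectic_HcscK_toric} as the Euler--Lagrange equation.

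For convexity along a linear path $u_t$ in $\mathcal{A}(\xi)$, the boundary integral and the $C$-term contribute nothing to the second variation, while convexity of $-\int_P\log\det(D^2u)\,\mrm{d}\mu$ is classical. The decisive part is the second variation of $\int_P\rho(\xi D^2u\bar\xi D^2u)\,\mrm{d}\mu$, which, as emphasised in the discussion after Theorem~\ref{thm:convexity}, is not individually convex: the log-det term compensates. Following the abelian case, I would analyse the integrand pointwise, diagonalising $\xi\,D^2u_t\,\bar\xi\,D^2u_t$ at each point and reducing the question to a spectral inequality on complex symmetric matrices involving $f$, $f'$ and $f''$, which is the same inequality already established in Theorem~\ref{thm:convexity}. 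The main obstacle is the careful bookkeeping in the first-variation computation of the Biquard--Gauduchon term---one must verify that the two symmetrised trace contributions assemble correctly into the square-root endomorphism---together with the analogous pointwise algebraic identity for the second variation; once these are in place, the toric boundary analysis is handled cleanly by Proposition~\ref{proposition:toric_boundarycond} and the integration-by-parts formula, and the passage from the abelian to the toric case requires no further ingredients.
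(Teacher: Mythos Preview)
Your proposal is correct and follows essentially the same approach as the paper: the Euler--Lagrange part reduces to the integration-by-parts formula of Lemma~\ref{lemma:intbyparts_realmm}, and convexity is the pointwise computation of Section~\ref{ConvexitySec} together with the observation that the toric boundary and $C$-terms are linear in~$u$. The only organisational difference is that the paper combines the log-det and Biquard--Gauduchon first variations into the single coefficient $(\mathbbm{1}-\xi\,D^2u\,\bar{\xi}\,D^2u)^{1/2}(D^2u)^{-1}$ \emph{before} integrating by parts, so Lemma~\ref{lemma:intbyparts_realmm} is invoked once and its boundary term $\int_{\partial P}\varphi\,\mrm{d}\sigma$ cancels directly against the variation of the linear boundary term; in your decomposition you effectively apply the lemma twice (once with $\xi=0$, once with~$\xi$), and the boundary contributions for the BG piece cancel between these two applications rather than ``vanishing thanks to Proposition~\ref{proposition:toric_boundarycond}'' as you wrote---but the outcome is identical.
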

As in the periodic case, the convexity of the toric \textit{HK}-energy implies that solutions of the toric HcscK system in the subset~$\m{A}'(\xi) \subset \m{A}(\xi)$ are unique, if the real and imaginary parts of~$\xi$ are semidefinite (see Remark~\ref{remark:A_convex}).

The integration by parts formula in Lemma~\ref{lemma:intbyparts_realmm} also gives a first necessary condition for the existence of solutions of~\eqref{eq:symplectic_HcscK_toric}.
\begin{theorem}\label{thm:Kstab}
Assume that there is a solution~$(u,\xi)$ of the toric HcscK system. Then the polarised toric manifold~$M$ is uniformly~$K$-stable.
\end{theorem}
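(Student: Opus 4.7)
My plan is to adapt the integration-by-parts derivation of K-stability from cscK in~\cite[Lemma~3.3.5]{Donaldson_stability_toric}, replacing the inverse Hessian $(D^2u)^{-1}$ by the positive-definite tensor
\[
W^{ab} := \left(\left(\mathbbm{1} - \xi D^2u\,\bar\xi D^2u\right)^{1/2}(D^2u)^{-1}\right)^{ab}
\]
that governs the real moment map equation in~\eqref{eq:symplectic_HcscK_toric}. The first step is to apply Lemma~\ref{lemma:intbyparts_realmm} to a convex test function~$f$ on~$P$ (in the usual regularity class for K-stability) together with $W^{ab}_{,ab}=-C$, in order to obtain
\[
\m{L}(f) := \int_{\diff P} f\,\mrm{d}\sigma - C\int_P f\,\mrm{d}\mu \;=\; \int_P W^{ab} f_{,ab}\,\mrm{d}\mu,
\]
the HcscK analogue of the identity underlying Donaldson's proof.

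Next I would establish Hermitian positivity of~$W$. Setting $M := (D^2u)^{1/2}\xi(D^2u)^{1/2}$, which is complex symmetric and hence satisfies $M\bar M = MM^*\succeq 0$ with eigenvalues equal to those of $\xi D^2u\,\bar\xi D^2u$, a similarity transformation yields
\[
W = (D^2u)^{-1/2}\,(\mathbbm{1} - M\bar M)^{1/2}\,(D^2u)^{-1/2},
\]
Hermitian positive definite. With $r_0 := \sup_P\SpecRad(\xi D^2u\,\bar\xi D^2u)<1$ (finite by compactness of~$P$), this gives $(\mathbbm{1}-M\bar M)^{1/2}\geq\sqrt{1-r_0}\,\mathbbm{1}$ and, taking real parts, $\operatorname{Re}(W)\geq\sqrt{1-r_0}\,(D^2u)^{-1}$ as real symmetric matrices. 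Since $f_{,ab}$ is real symmetric, the antisymmetric part $\operatorname{Im}(W)$ pairs trivially with it, so that
\[
\m{L}(f) = \int_P \operatorname{Re}(W)^{ab} f_{,ab}\,\mrm{d}\mu \;\geq\; \sqrt{1-r_0}\int_P (D^2u)^{-1,ab} f_{,ab}\,\mrm{d}\mu.
\]

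To conclude uniform K-stability I would apply Donaldson's arguments for cscK~$\Rightarrow$~uniform K-stability to this right-hand side: for the smooth strictly convex potential $u=u_P+h$ arising from our solution, the pairing $f\mapsto\int_P(D^2u)^{-1,ab}f_{,ab}\,\mrm{d}\mu$ is bounded below by $\lambda\|f\|$ in a Donaldson-type norm for normalised convex~$f$, by the same estimates used in the cscK case; combining with the inequality above yields $\m{L}(f)\geq\lambda\|f\|$. The principal technical difficulty I anticipate is the boundary analysis: Proposition~\ref{proposition:toric_boundarycond} guarantees smoothness of $D^2u_P\,\xi\,D^2u_P$ rather than~$\xi$ itself, and one must check that this is enough to make the integration-by-parts formula of Lemma~\ref{lemma:intbyparts_realmm} produce precisely the boundary measure~$\mrm{d}\sigma$ on~$\diff P$, and to keep $\operatorname{Re}(W)\geq\sqrt{1-r_0}(D^2u)^{-1}$ meaningful up to~$\diff P$ so that the comparison with Donaldson's cscK argument is effective.
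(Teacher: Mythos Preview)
Your approach is essentially the paper's: apply Lemma~\ref{lemma:intbyparts_realmm} to obtain $\m{L}_C(f)=\int_P W^{ab}f_{,ab}\,\mrm{d}\mu$, show that $W$ is Hermitian positive definite, and then invoke the Chen--Li--Sheng criterion (stated in the paper as Proposition~\ref{prop:unif_lemma}, from~\cite{Li_uniformKstab}) that any such identity with a strictly positive-definite coefficient matrix forces uniform $K$-stability. Your similarity argument $W=(D^2u)^{-1/2}(\mathbbm{1}-M\bar M)^{1/2}(D^2u)^{-1/2}$ with $M=(D^2u)^{1/2}\xi(D^2u)^{1/2}$ is a clean alternative to the paper's Takagi--Autonne diagonalisation; on the other hand, your final detour through the comparison $\operatorname{Re}(W)\geq\sqrt{1-r_0}\,(D^2u)^{-1}$ is unnecessary---the paper applies the criterion directly with $V=W$, which avoids having to justify the lower bound $\int_P(D^2u)^{-1,ab}f_{,ab}\geq\lambda\norm{f}$ for a potential $u$ that does \emph{not} solve Abreu's equation (and hence for which the usual cscK identity $\m{L}_C(f)=\int_P(D^2u)^{-1,ab}f_{,ab}$ is unavailable).
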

We refer to Section~\ref{sec:toric_HKenergy} for the definition of uniform~$K$-stability of a toric manifold.
\begin{remark}
Here and in the rest of the paper, we always consider \emph{$K$-stability with respect to toric test-configurations}, i.e. test-configurations whose total space is a toric variety, with a torus action covering that of~$M$. These test-configurations are described in terms of piecewise-linear convex functions on the polytope, we refer to~\cite{Donaldson_stability_toric} for the details. Toric~$K$-stability is a priori weaker than~$K$-stability, but the two notions are conjectured to coincide. Indeed, toric~$K$-stability should imply the existence of a cscK metric, which in turn implies general~$K$-stability by~\cite{BermanDarvasLu_weakminimizers}. For an algebraic result related to this conjecture, see~\cite{StoppaCodogni_torus}, where it is shown that a general test-configuration can be related to a~$\bb{T}$-equivariant filtration.
\end{remark}

Note that there is an analogy between Theorem \ref{thm:Kstab} and the case of Higgs bundles: it is possible that a rank~$2$ bundle~$V$ over a Riemann surface~$\Sigma$ supporting a solution of Hitchin's equations is necessarily stable. For example this happens when~$V$ has trivial determinant, the genus of~$\Sigma$ is~$2$, and~$V$ is not decomposable or an extension (see~\cite{Hitchin_self_duality} Example 3.13).

On toric manifolds, uniform~$K$-stability is equivalent to the existence of torus-invariant cscK metrics. For complex surfaces, this was shown by Donaldson in a series of papers culminating in \cite{Donaldson_cscKmetrics_toricsurfaces}. For higher-dimensional manifolds, this is a combination of \cite{Li_uniformKstab} and the famous results of Chen-Cheng \cite{ChenCheng_automorphisms} together with \cite{Hisamoto_stabilitytoric}. We refer to \cite{Apostolov_toric} for an overview of these results; see also \cite{ChiLi_geodesicrays_stability} for a different proof. Hence, Theorem \ref{thm:Kstab} implies that the existence of a torus-invariant cscK metric is a necessary condition for the existence of solutions to the HcscK system. We can try to deduce the converse by considering a perturbation around the cscK metric. In the $2$-dimensional case, relying on the results of~\cite{Li_toric_affinegeom} on the prescribed curvature problem on toric surfaces, we can obtain an analogue of Theorem~\ref{QuantPertThm}.
\begin{theorem}\label{QuantPertThmToric}
Let~$M$ be a uniformly K-stable polarised toric surface. There is a constant~$K > 0$, depending only on certain elliptic estimates on~$M$ endowed with the cscK metric~$g_{0}$, such that if~$\xi$ satisfies
$\norm{\xi}_{u_0} < K$
then there exists a solution~$u$ to the real moment map equation in~\eqref{eq:symplectic_HcscK_toric}. If the real and imaginary part of~$\xi$ are semidefinite then~$u$ is unique up to an additive constant.
\end{theorem}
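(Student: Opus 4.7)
The plan is to adapt the strategy of Theorem~\ref{QuantPertThm} to the toric setting, using the cscK metric as the unperturbed solution in place of the flat metric. When~$\xi \equiv 0$, the real moment map equation in~\eqref{eq:symplectic_HcscK_toric} reduces to Abreu's equation~$(D^2 u^{-1})^{ab}_{,ab} = -C$. Since~$M$ is uniformly~$K$-stable, Donaldson's existence results for cscK metrics on toric surfaces provide a~$\bb{T}^2$-invariant symplectic potential~$u_0$ solving this equation, corresponding to the cscK metric~$g_0$; this will be our reference solution, corresponding to the pair~$(u_0, \xi \equiv 0)$.

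First I would expand the real moment map equation around~$(u_0, 0)$. Writing~$u = u_0 + \phi$ and using the Taylor expansion
\begin{equation*}
\left(\mathbbm{1}-\xi\,D^2u\,\bar{\xi}\,D^2u\right)^{\frac{1}{2}} = \mathbbm{1} - \tfrac{1}{2}\xi\,D^2u\,\bar{\xi}\,D^2u + O(\norm{\xi}_u^4),
\end{equation*}
the real moment map equation takes the schematic form
\begin{equation*}
L\phi = Q(\phi, \xi),
\end{equation*}
where~$L$ is the linearisation at~$u_0$ of the Abreu operator~$u \mapsto (D^2 u^{-1})^{ab}_{,ab}$, and~$Q$ collects all higher order terms in~$\phi$ together with all~$\xi$-dependent terms. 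The leading~$\xi$-contribution to~$Q$ is of size~$\norm{\xi}_{u_0}^2$, since~$\xi$ enters only through the combination~$\xi\bar{\xi}$. The task is then to solve this nonlinear equation in an appropriate Banach space of symplectic potentials via a contraction mapping or implicit function theorem argument.

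The key analytic input is the invertibility of~$L$ modulo constants in H\"older-type spaces adapted to the singular behaviour of~$u_P$ at~$\diff P$. This is precisely the elliptic theory developed by Li in~\cite{Li_toric_affinegeom} in the context of the prescribed scalar curvature problem on toric surfaces: from that work one extracts a right inverse~$L^{-1}$ with controlled operator norm, together with Schauder-type estimates for the quadratic and higher remainders. A standard contraction argument on a small ball~$\set{\norm{\phi} \leq R}$ then yields a fixed point~$\phi$ whenever~$\norm{\xi}_{u_0} < K$, for~$K$ determined by the operator norm of~$L^{-1}$ and the Schauder constants. One also has to check that the perturbation~$u = u_0 + \phi$ stays in~$\m{A}(\xi)$ so that the square root is well-defined, but this is automatic for~$R$ small since the spectral radius bound is an open condition that holds trivially at~$\xi = 0$.

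Uniqueness when~$\Re(\xi)$ and~$\Im(\xi)$ are semidefinite follows immediately from Theorem~\ref{thm:toricHKenergy} and Remark~\ref{remark:A_convex}: the toric \textit{HK}-energy is strictly convex on the convex set~$\m{A}'(\xi)$, so critical points are unique up to additive constants. The main obstacle is to verify that Li's elliptic theory can be invoked precisely in the form required --- in particular that the Schauder estimates and the invertibility of the linearised Abreu operator are available in function spaces simultaneously compatible with the singular behaviour of~$u_P$ at~$\diff P$ and with the boundary conditions for~$\xi$ given by Proposition~\ref{proposition:toric_boundarycond}.
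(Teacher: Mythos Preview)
Your proposal takes a genuinely different route from the paper. You set up a direct contraction/implicit function theorem argument around~$(u_0,0)$, linearising the square-root operator and inverting the linearised Abreu operator~$L$ in weighted H\"older spaces. The paper instead runs a continuity method: for fixed~$\xi$ it deforms Abreu's equation along the path~$u^{ab}_{,ab} = -C + A_t$ with~$A_t$ given by~\eqref{nonlinearLHS}, starting from the cscK potential at~$t=0$. Openness along the path comes from the convexity of the toric \textit{HK}-energy (exactly as you use it for uniqueness), while closedness is obtained from Li's a priori estimates for Abreu's equation~$u^{ab}_{,ab}=-A$ on~$(A,\lambda)$-stable surfaces. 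Note that this is also what the proof of Theorem~\ref{QuantPertThm} actually does, so your opening sentence misdescribes that strategy: the periodic case is likewise a continuity argument with a priori estimates, not a one-shot IFT.

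The substantive difference is in what analytic input each approach requires. You need a right inverse for~$L$ with controlled norm in function spaces compatible with Guillemin's boundary conditions and with the boundary behaviour of~$\xi$; you correctly flag that extracting this from~\cite{Li_toric_affinegeom} is not automatic, and indeed that reference supplies a priori estimates for the full nonlinear problem rather than a packaged linear theory. The paper sidesteps this entirely: it uses Li's nonlinear estimates as a black box, but must then supply one extra ingredient you do not mention, namely that~$(C-A_t,\lambda)$-stability persists along the path with~$\lambda$ independent of~$t$. This is established by an integration-by-parts computation (via Lemma~\ref{lemma:intbyparts_realmm}) showing that for convex~$f$ the correction term
\[
\int_P f\left(\left(\mathbbm{1}+\left(\mathbbm{1}-\xi G\bar{\xi}G\right)^{\frac{1}{2}}\right)^{-1}\xi G\bar{\xi}\right)^{ab}_{,ab}\mrm{d}\mu
\]
is non-negative, using the positive-definiteness of~$G^{-1}-(\mathbbm{1}-\xi G\bar{\xi}G)^{1/2}G^{-1}$. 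The paper also needs the right-hand side to be edge-nonvanishing, which is ensured by~$\norm{\xi D^2 u}_{C^2}$ small. So the paper's argument trades your linear-theory obstacle for a stability-preservation lemma plus an appeal to off-the-shelf nonlinear estimates; your approach is conceptually cleaner but, as you acknowledge, leaves the hardest step unverified.
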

It is natural to conjecture that a similar result should also hold on higher dimensional manifolds. We return on this point in Section \ref{sec:toric}.
\bigbreak

The paper is planned as follows. In Section~\ref{sec:real_mm} we write the real moment map in~\eqref{eq:HcscK} as the~$L^2$-pairing of~$h$ with some explicit function on~$M$ depending on~$J$,~$\omega$ and~$\alpha$. This result has independent interest and prepares our computations in Section~\ref{SympCoordSec}, which contains some background on K\"ahler geometry in symplectic coordinates, the proofs of Theorem~\ref{thm:HcscKSymplCoordThm} and Proposition~\ref{proposition:toric_boundarycond}, and a characterisation of the integrability condition for a deformation of the complex structure~$\alpha^{\vee}$. Section~\ref{ConvexitySec} proves our variational characterisation on abelian varieties and the convexity result, Proposition~\ref{proposition:HKEulLagr} and Theorem~\ref{thm:convexity}. Section~\ref{sec:toric_HKenergy} instead focuses on the variational characterization for toric manifolds and contains the proof of Theorem~\ref{thm:toricHKenergy} and Theorem~\ref{thm:Kstab}. The existence results for the periodic HcscK system, Theorems~\ref{QuantPertThm} and~\ref{HcscK1dThm}, are proved in Section~\ref{sec:periodic_sol}, together with Theorem~\ref{LowRankThm}. In Section~\ref{sec:toric} we investigate the existence of solutions to the toric HcscK system.

\paragraph{Acknowledgements.}
Part of this work was written while the first author was visiting the University of Illinois at Chicago. The first author wishes to thank the Department of Mathematics at UIC for kind hospitality, and particularly Julius Ross for many helpful discussions related to this work. We are grateful to Julien Keller and Vestislav Apostolov for some useful comments on an earlier version of these results. We are very grateful to the reviewer for the valuable suggestions, in particular regarding our stability result, Theorem \ref{thm:Kstab}. We would also like to thank all the participants in the K\"ahler geometry seminars at IGAP, Trieste.

\section{An explicit expression for the real moment map}\label{sec:real_mm}

The real moment map in~\eqref{eq:real_mm_implicit} is rather difficult to study in this implicit form. Using the~$L^2$-pairing of functions we can identify~$m^{\bb{R}}_{(J,\alpha)}$ with a smooth real function on~$M$. This has been shown in~\cite[\S$4$, \S$5$]{ScarpaStoppa_hyperk_reduction} in the special case of complex curves and surfaces. The aim of the present Section is to find an explicit expression for~$m^{\bb{R}}_{(J,\alpha)}\in\m{C}^\infty(M,\bb{R})$ in arbitrary dimension. This has independent interest. In Section~\ref{SympCoordSec} we will then find an alternative description of this function in symplectic coordinates.

\begin{theorem}\label{thm:real_mm_expl}
The real moment map equation in the HcscK system is equivalent to 
\begin{equation*}
2\left(s(g_J)-\hat{s}\right)+\operatorname{div}X(J,\alpha)=0
\end{equation*}
where the vector field~$X(J,\alpha)$ is written, in a system of complex coordinates, as
\begin{equation*}
X(J,\alpha)=2\,\Re\left(g(\nabla^a\alpha,\bar{\alpha}\hat{\alpha})\diff_{z^a}-g(\nabla^{\bar{b}}\alpha,\bar{\alpha}\hat{\alpha})\diff_{\bar{z}^b}-2\nabla^*(\alpha\bar{\alpha}\hat{\alpha})\right)
\end{equation*}
and~$\hat{\alpha}$ is the endomorphism of~$T^*M$ defined by
\begin{equation*}
\hat{\alpha}=\frac{1}{2}\left(\mathbbm{1}+\left(\mathbbm{1}-\alpha\bar{\alpha}\right)^{\frac{1}{2}}\right)^{-1}.
\end{equation*}
\end{theorem}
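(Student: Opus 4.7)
The plan is to start from the implicit formula~\eqref{eq:real_mm_implicit} and rewrite the second summand so that $h$ appears as a pointwise factor, not hidden inside derivatives. The first summand $\mu_J(h)$ already contributes $2(s(g_J) - \hat{s})$ by the classical Donaldson-Fujiki computation, so the content of the theorem is entirely in the second summand, which must be massaged into $\int_M h\cdot\operatorname{div} X(J,\alpha)\,\frac{\omega^n}{n!}$. The work splits naturally into three tasks: making the Biquard-Gauduchon potential $\rho$ sufficiently explicit; evaluating its differential along Lie-derivative directions; and integrating by parts until $h$ stands alone.

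First I would establish the spectral formula $\rho(J,\alpha) = \mrm{Tr}\,f(\alpha\bar{\alpha})$ with $f(x) = 1 - (1-x)^{1/2} + \log(1 + (1-x)^{1/2})$. Working on the model symmetric space $\operatorname{Sp}(2n)/\operatorname{U}(n)$, one uses the uniqueness part of the Biquard-Gauduchon theorem: it suffices to check that the proposed $\rho$ is $\operatorname{Sp}(2n)$-invariant and that adding $\I\diff\bdiff(\mrm{Tr}\,f(\alpha\bar{\alpha}))$ to the pullback of the invariant K\"ahler form produces a hyperk\"ahler metric on the cotangent bundle. This reduces to a scalar ODE for $f$ that can be checked directly. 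A critical by-product is the identity $f'(x) = \tfrac{1}{2}(1 + (1-x)^{1/2})^{-1}$, so that $f'(\alpha\bar{\alpha}) = \hat{\alpha}$ in the notation of the statement; this is what ties the ``spectral'' formulation to the ``algebraic'' endomorphism $\hat{\alpha}$ appearing in $X(J,\alpha)$.

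Next I would compute variations. Using the cyclic property of the trace and the functional-calculus identity $\delta\,\mrm{Tr}\,f(A) = \mrm{Tr}(f'(A)\,\delta A)$, the derivative of $\rho$ along a tangent vector $(\delta J,\delta\alpha) \in T\cotJ$ becomes a trace pairing involving $\hat{\alpha},\alpha,\bar{\alpha}$, and the variations. Taking into account the action of the Donaldson-Fujiki complex structure $\operatorname{I}$ on $T\cotJ$ produces $d^c\rho$ from $d\rho$. Substituting $(\delta J, \delta\alpha) = (\mathcal{L}_{X_h}J, \mathcal{L}_{X_h}\alpha)$ and using the standard expressions for these Lie derivatives in terms of $\nabla^2 h$ (together with the Leibniz rule for $\mathcal{L}_{X_h}$ acting on the endomorphism $\alpha$) yields an integrand linear in the covariant derivatives of $h$.

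The heart of the argument is the final integration by parts. Moving all derivatives off $h$ requires repeated application of the $L^2$-adjoint identity $\int_M\langle\nabla h,T\rangle = -\int_M h\,\nabla^*T$ while carefully tracking the $(1,0)/(0,1)$ decomposition, so as to recognise the resulting density as the divergence of a single real vector field. At each step one exploits that $\hat{\alpha}$, being an analytic function of $\alpha\bar{\alpha}$, commutes with $\alpha\bar{\alpha}$ (so various reorderings under the trace are legal), and that $\alpha$ is a cotangent vector to $\m{J}$, hence satisfies a symmetry condition which kills several would-be obstruction terms. Regrouping the surviving pieces into components along $\diff_{z^a}$ and $\diff_{\bar{z}^b}$ plus the codifferential contribution $-2\nabla^*(\alpha\bar{\alpha}\hat{\alpha})$ then reproduces the stated $X(J,\alpha)$. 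The main obstacle I anticipate is precisely this bookkeeping: the raw expansion produces many terms with mixed type and covariance indices, and their regrouping into a real divergence relies on cancellations that become visible only when $\alpha\bar{\alpha}\hat{\alpha}$ and $\bar{\alpha}\hat{\alpha}$ are handled as irreducible building blocks throughout, rather than expanding $\hat{\alpha}$ into its defining series.
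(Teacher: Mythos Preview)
Your overall structure matches the paper's: isolate the scalar-curvature term, show $\rho(J,\alpha)=\mrm{Tr}\,\tilde f(\alpha\bar\alpha)$ with $\tilde f'(\alpha\bar\alpha)=\hat\alpha$, differentiate, then integrate by parts. For the last step the paper simply invokes the calculations in~\cite{ScarpaStoppa_hyperk_reduction}~\S4.2 and~\S5.2, so your sketch of that bookkeeping is actually more explicit than what the present proof records.

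The genuine divergence is in how the spectral formula for~$\rho$ is obtained. The paper does \emph{not} verify a candidate via the Biquard--Gauduchon uniqueness clause. It starts from the representation already established in~\cite{ScarpaStoppa_hyperk_reduction},
\[
\rho(J,\alpha)=\tfrac12\,\mrm{Tr}\bigl(f(\Xi_A)(A)\cdot A\bigr),
\]
where~$\Xi_A\colon B\mapsto -\tfrac12(A^2B+BA^2)$ is an endomorphism of~$T_J\m J$, not of~$T^*M$. The new work is a linear-algebraic reduction: writing~$\alpha^\vee=g^{-1}\sigma$ with~$\sigma$ symmetric, diagonalising~$g$ unitarily, and then applying the Takagi--Autonne factorisation to the resulting complex symmetric matrix, one produces a basis of~$T_J\m J$ in which~$\Xi_A$ is diagonal with eigenvalues~$-(\delta_i+\delta_j)$, the~$\delta_i$ being the (real, nonnegative) eigenvalues of~$\alpha\bar\alpha$. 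Evaluating the trace in that basis collapses the operator-level spectral function~$f(\Xi_A)$ to the matrix-level one~$\tilde f(\alpha\bar\alpha)$.

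Your alternative---guess~$\mrm{Tr}\,\tilde f(\alpha\bar\alpha)$ and check that~$\I\diff\bdiff$ of it corrects the pullback form to a hyperk\"ahler one---is legitimate in principle, but ``reduces to a scalar ODE that can be checked directly'' undersells the task: you would have to redo the curvature computation on~$T^*(\operatorname{Sp}(2n)/\operatorname{U}(n))$ and verify the hyperk\"ahler condition eigenvalue by eigenvalue, which amounts to re-proving the Biquard--Gauduchon result for this particular symmetric space. The paper's route trades that analytic verification for a finite-dimensional diagonalisation argument, at the price of importing the~$\Xi_A$ formula from prior work.
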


\begin{proof}
Recall that the real moment map~\eqref{eq:real_mm_implicit} is the sum of two terms,~$m^{\bb{R}}_{(J,\alpha)}(h)=\mu_{J}(h)+\mu'_{J,\alpha}(h)$, where~$\mu_{J}(h)=2\langle s(g_J)-\hat{s},h\rangle_{L^2(\omega)}$ and 
\begin{equation}\label{eq:real_mm_accessorio}
\mu'_{J,\alpha}(h)=\int_{x\in M}d^c\rho_{(J(x),\alpha(x))}\left(\mathcal{L}_{X_h}J,\mathcal{L}_{X_h}\alpha\right)\,\frac{\omega^n}{n!}.
\end{equation}
Our aim is to also write this integral as the~$L^2$-pairing of~$h$ with some function. The general calculations that are required have been carried out already in~\cite[\S$4.2$ and \S$5.2$]{ScarpaStoppa_hyperk_reduction}, however they depend on an explicit expression for the Biquard-Gauduchon function and its differential, which in~\cite{ScarpaStoppa_hyperk_reduction} was obtained only for complex dimension~$1$ or~$2$. We will show here how to remove this restriction. As shown in~\cite[\S$3.2$]{ScarpaStoppa_hyperk_reduction}, the Biquard-Gauduchon function~$\rho(J,\alpha)$ is computed as follows: let~$f$ be the spectral function
\begin{equation*}
f(x)=\frac{1}{x}\left(\left(1+x\right)^{\frac{1}{2}}-1-\mrm{log}\frac{1+\left(1+x\right)^{\frac{1}{2}}}{2}\right),
\end{equation*}
let~$A:=\alpha^\vee+\bar{\alpha}^\vee$ be the first-order deformation of the complex structure induced by~$\alpha$, and consider the endomorphism~$\Xi_A$ of~$T_J\m{J}$ defined by
\begin{equation*}
\Xi_A:B\mapsto-\frac{1}{2}\left(A^2B+BA^2\right).
\end{equation*}
Then the Biquard-Gauduchon function is
\begin{equation*}
\rho(J,\alpha)=\frac{1}{2}\mrm{Tr}\left(f(\Xi_A)(A)\,A\right).
\end{equation*}
In order to compute the spectral function~$f$ of the endomorphism~$\Xi_A$, we will first find a basis of~$T_J\m{J}$ that diagonalises~$\Xi_A$. Decomposing~$TM$ as~$T^{1,0}M\oplus T^{0,1}M$, we can write~$A=\begin{pmatrix}0 & \bar{\alpha}^\vee \\ \alpha^\vee & 0\end{pmatrix}$. Moreover, in a system of complex coordinates on~$M$,~$\alpha^\vee=g^{-1}\sigma$ for a symmetric complex matrix~$\sigma$. Since~$g$ is a Hermitian matrix, there is a unitary matrix~$S$ such that~$g^{-1}=\bar{S}^\transpose\Lambda S$, for a diagonal matrix~$\Lambda$ with real, positive eigenvalues. Then we can decompose~$\alpha^\vee$ as
\begin{equation*}
\alpha^\vee=\bar{S}^\transpose\Lambda S\sigma=\bar{S}^\transpose\Lambda^{\frac{1}{2}}\Lambda^{\frac{1}{2}} S\sigma S^\transpose\Lambda^{\frac{1}{2}}\Lambda^{-\frac{1}{2}}\bar{S}.
\end{equation*}
The matrix~$R:=\Lambda^{\frac{1}{2}} S\sigma S^\transpose\Lambda^{\frac{1}{2}}$ is symmetric, and the Takagi-Autonne factorization (see~\cite{Takagi_Factor} and~\cite{Autonne_Factor}) tells us that there is a unitary matrix~$U$ such that~$R=U D U^\transpose$ for a real diagonal matrix~$D$. Let~$\Delta:=D^2$. Then we have
\begin{align*}
&\alpha^\vee=\bar{S}^\transpose\Lambda^{\frac{1}{2}}U D U^\transpose \Lambda^{-\frac{1}{2}}\bar{S},\\
&\alpha^\vee\bar{\alpha}^\vee=\bar{S}^\transpose\Lambda^{\frac{1}{2}}U \Delta \bar{U}^\transpose \Lambda^{-\frac{1}{2}}S.
\end{align*}
This shows that the eigenvalues of~$\alpha^\vee\bar{\alpha}^\vee$ are the diagonal entries of~$\Delta$, and in particular they are all real and non-negative.

From now on let~$\Delta=\mrm{diag}\left(\delta(1),\dots,\delta(n)\right)$, and let~$Q:=\bar{S}^\transpose\Lambda^{\frac{1}{2}}U$, so that~$\alpha^\vee\bar{\alpha}^\vee=Q\Delta Q^{-1}$. The automorphism~$Q$ essentially describes a basis of~$T_J\m{J}$ for which~$\Xi_A$ is diagonal: indeed, for any~$B$ we find
\begin{equation*}
\begin{split}
A^2B+BA^2=&\begin{pmatrix}0 & \bar{Q}\\ Q & 0\end{pmatrix}
\begin{pmatrix}\Delta & 0\\ 0 &\Delta\end{pmatrix}
\begin{pmatrix}0 & Q^{-1}\\ \bar{Q}^{-1} & 0\end{pmatrix}
\begin{pmatrix}0& B'\\ B''&0\end{pmatrix}+\\
&+\begin{pmatrix}0& B'\\ B''&0\end{pmatrix}
\begin{pmatrix}0 & \bar{Q}\\ Q & 0\end{pmatrix}
\begin{pmatrix}\Delta & 0\\ 0 &\Delta\end{pmatrix}
\begin{pmatrix}0 & Q^{-1}\\ \bar{Q}^{-1} & 0\end{pmatrix}=\\
=\begin{pmatrix}0 &\bar{Q}\\ Q &0\end{pmatrix}&\!\!
\begin{pmatrix}0 &\Delta P(B)+P(B)\Delta\\ \Delta\overline{P(B)}+\overline{P(B)}\Delta &0\end{pmatrix}\!\!
\begin{pmatrix}0 &Q^{-1}\\\bar{Q}^{-1} &0\end{pmatrix},
\end{split}
\end{equation*}
where~$P(B)$ denotes the symmetric matrix~$Q^{-1}B''\bar{Q}$. The map
\begin{equation*}
P\mapsto \Delta P+P\Delta
\end{equation*}
on the space of complex symmetric matrices is quite easy to diagonalise: considering the matrices~$E_{ij}$ for~$i\leq j$ defined by
\begin{equation*}
\left(E_{ij}\right)_{pq}:=\frac{1}{2}\left(\delta_{ip}\delta_{jq}+\delta_{iq}\delta_{jp}\right),
\end{equation*}
we have~$\Delta E_{ij}+E_{ij}\Delta=\left(\delta_i+\delta_j\right)E_{ij}$.

We are now ready to compute the Biquard-Gauduchon function. We do this under the change of basis defined by~$Q$. Notice that~$P(A)=D$ is a real diagonal matrix, so that in the basis~$E_{ij}$ of the space of complex symmetric matrices~$P(A)=\sum_{i=1}^n\sqrt{\delta_i}\,E_{ii}$. Then we have
\begin{align}\label{eq:rhoAC_acc}
\nonumber&\rho(J,\alpha)=\frac{1}{2}\mrm{Tr}\left[f(\Xi_A)(A)\!\cdot\!A\right]=\\
\nonumber&=\frac{1}{2}\mrm{Tr}\left[\!\begin{pmatrix}
0 & \sum_{i=1}^nf(-\delta_i)\sqrt{\delta_i}\,E_{ii}
\\ \sum_{i=1}^nf(-\delta_i)\sqrt{\delta_i}\,E_{ii} & 0
\end{pmatrix}\!\!
\begin{pmatrix}
0 & P(A) \\ \overline{P(A)} & 0
\end{pmatrix}
\!\right]=\\
&=\mrm{Tr}\left[\left(\sum_{i=1}^nf(-\delta_i)\sqrt{\delta_i}\,E_{ii}\right)\left(\sum_{i=1}^n\sqrt{\delta_i}\,E_{ii}\right)\right].
\end{align}
Note that~$E_{ij}$ is an orthogonal basis with respect to the trace, and in particular
\begin{equation*}
\mrm{Tr}(E_{ii}E_{jj})=\sum_{k,l}\delta_{ik}\delta_{il}\delta_{jk}\delta_{jl}=\sum_l\delta_{il}\delta_{ij}\delta_{jl}=\delta_{ij}.
\end{equation*}
So~\eqref{eq:rhoAC_acc} gives 
\begin{equation*}
\begin{split}
\rho(J,\alpha)=&\mrm{Tr}\left[\left(\sum_{i=1}^nf(-\delta_i)\sqrt{\delta_i}E_{ii}\right)\left(\sum_{i=1}^n\sqrt{\delta_i}E_{ii}\right)\right]=\sum_{i=1}^nf(-\delta_i)\delta_i=\\
=&\sum_i1-\sqrt{1-\delta_i}+\mrm{log}\frac{1+\sqrt{1-\delta_i}}{2}.
\end{split}
\end{equation*}
In other words, if we let~$\tilde{f}(x):=1-\sqrt{1-x}+\mrm{log}\frac{1+\sqrt{1-x}}{2}$, then~$\rho(J,\alpha)$ is the spectral function~$\mrm{Tr}\left(\tilde{f}(\alpha\bar{\alpha})\right)$.

Now the integral in~\eqref{eq:real_mm_implicit} does not depend on~$\rho$, but rather on its differential~$\mrm{d}\rho$. This can be computed as
\begin{equation*}
\begin{split}
\diff_t\Big|_{t=0}\rho(\alpha_t\bar{\alpha}_t)=&\mrm{Tr}\left(\tilde{f}'(\alpha\bar{\alpha})\diff_t(\alpha_t\bar{\alpha}_t)\right)=\\
=&\mrm{Tr}\left(\frac{1}{2}\left(\mathbbm{1}+\left(\mathbbm{1}-\alpha\bar{\alpha}\right)^{\frac{1}{2}}\right)^{-1}\diff_t(\alpha_t\bar{\alpha}_t)\right)=\mrm{Tr}\left(\hat{\alpha}\,\diff_t(\alpha_t\bar{\alpha}_t)\right).
\end{split}
\end{equation*}
At this point the general calculations in~\cite[\S$4.2$ and \S$5.2$]{ScarpaStoppa_hyperk_reduction} can be used to obtain the required explicit expression for the real moment map.\qedhere
\end{proof}

\begin{remark}
We will also need an alternative expression for~$\hat{\alpha}$; at least when the eigenvalues of~$\alpha\bar{\alpha}$ are distinct, one can check in a system of coordinates for which~$\alpha\bar{\alpha}$ is diagonal that
\begin{equation}\label{eq:hat_alpha}
\hat{\alpha}=\sum_i\frac{1}{2\left(1+\sqrt{1-\delta_i}\right)}\prod_{j\not= i}\frac{\delta_j\mathbbm{1}-\alpha\bar{\alpha}}{\delta_j-\delta_i}.
\end{equation}
By passing to the limit as~$\delta_j - \delta_i \to 0$, this expression for~$\hat{\alpha}$ extends to the case in which two or more eigenvalues coincide. 
\end{remark}

\section{The HcscK equations in symplectic coordinates}\label{SympCoordSec}

This Section is devoted to writing the Hitchin-cscK system in a system of symplectic coordinates on abelian varieties and toric manifolds. We obtain the expression of the periodic HcscK system of Theorem~\ref{thm:HcscKSymplCoordThm} and the analogous equation~\eqref{eq:symplectic_HcscK_toric} in the toric case. We also study the boundary conditions for a symplectic potential~$u$ and a Higgs term~$\xi$ on a convex polytope corresponding to a toric manifold, proving Proposition~\ref{proposition:toric_boundarycond}. The techniques used in this proof can be used to study the integrability condition for a Higgs fields~$\xi$. This is the content of Proposition~\ref{proposition:integrability_xi}.

These results rely on the interplay of complex and symplectic coordinates. The idea of developing the K\"ahler geometry of toric manifolds in symplectic coordinates is due to Abreu~\cite{Abreu_toric} and was adapted to complex tori in~\cite{FengSzekelyhidi_abelian}. 

Consider first the case of an abelian variety~$M=\mathbb{C}^n/(\mathbb{Z}^n+i\mathbb{Z}^n)$, with the flat K\"ahler metric~$\omega$. Any~$\bb{T}^n$-invariant K\"ahler form~$\omega_g$ in the K\"ahler class~$[\omega]$ is given by the expression
\begin{equation*}
\omega_g = i \sum_{a,b} v_{a b} dz^a\wedge d\bar{z}^b
\end{equation*}
where~$v(x)$ is a strictly convex function of the form
\begin{equation*}
v(x)=\frac{1}{2}\abs{x}^2+h(x),
\end{equation*}
and the Hessian~$D^2 v = v_{ab}$ is~$\bb{Z}^n$-periodic. By convexity, the gradient
\begin{equation*}
y = \nabla v(x)
\end{equation*}
provides an alternative system of (periodic) coordinates; one can check that in fact these are Darboux coordinates for the symplectic form~$\omega_g$, so the~$y$ are called \emph{symplectic coordinates}. The Legendre dual~$u(y)$ to the potential~$v(x)$ is defined by the involutive relation
\begin{equation*}
u(y) + v(x) = x\cdot y.
\end{equation*}
Then the~$\mathbb{T}^n$-invariant tensor
\begin{equation*}
J(y, w) = J(y) = 
\left(\begin{matrix} 0 & -(D^2 u)^{-1} \\
D^2 u & 0
\end{matrix}\right) 
\end{equation*}
is an integrable almost complex structure on~$M$ compatible with the reference form~$\omega$. Considering (periodic) complex coordinates for~$J$ we return to the viewpoint of the fixed almost complex structure~$J_0$ on the abelian variety~$M$ with varying K\"ahler form~$\omega_g$. 

We have the fundamental Legendre duality identities 
\begin{equation*}
u_{ab}(y)=v^{ab}(x), v_{ab}(x)=u^{ab}(y), v^{ab}\diff_{x^a}=\diff_{y^b}, u^{ab}\diff_{y^a}=\diff_{x^b}.
\end{equation*}
Using these identities one may obtain the well-known result of Abreu~\cite{Abreu_toric} expressing the scalar curvature of the K\"ahler metric~$g_J$ as
\begin{equation*}
s(g_J) = -\frac{1}{4}(u^{ab})_{,ab}.
\end{equation*}

The key ingredient in the proof of Theorem~\ref{thm:HcscKSymplCoordThm} is the following simple computation.
\begin{lemma}\label{lemma:divergenza_coord_sympl}
Let~$X$ be a~$(1,0)$-vector field on~$M$ that is invariant under the~$\mathbb{T}^n$-action. Then in symplectic coordinates the divergence of~$X$ may be expressed as 
\begin{equation*}
\operatorname{div}(X)=\frac{1}{2}\diff_{y^a}\left(u^{ab}\chi_b(y)\right)
\end{equation*}
where we write~$\chi_b(y) =X^b(x)$ for the image under the Legendre duality.
\end{lemma}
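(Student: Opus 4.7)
The plan is to exploit the fact that $(y, w)$ are Darboux coordinates (up to a harmless overall constant) for the reference symplectic form $\omega$, so that the divergence of any (complex) vector field in these coordinates is simply the sum of partial derivatives of its components. To make this explicit, one first verifies that under the Legendre change $y = \nabla v(x)$ the form $\omega$ in the new coordinates becomes proportional to $\sum_a dy^a \wedge dw^a$; equivalently, $\omega^n/n!$ is (a constant multiple of) the standard Lebesgue measure in $(y, w)$, and no Jacobian factors appear in the divergence.

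The main step is then a straightforward bookkeeping of the decomposition
\begin{equation*}
X = X^b \partial_{z^b} = \frac{1}{2} X^b \partial_{x^b} - \frac{i}{2} X^b \partial_{w^b}
\end{equation*}
under the change of variables $\partial_{x^b} = u^{ab} \partial_{y^a}$, which comes from the Legendre identity $\partial y^c / \partial x^b = v_{cb} = u^{cb}$. Substituting and writing $\chi_b(y) = X^b(x)$ yields
\begin{equation*}
X = \frac{1}{2} u^{ab} \chi_b \, \partial_{y^a} - \frac{i}{2} \chi_b \, \partial_{w^b}.
\end{equation*}

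Finally, the $\mathbb{T}^n$-invariance of $X$ forces both $\chi_b$ and $u^{ab}$ to depend only on $y$. Hence the coefficient of $\partial_{w^b}$ is independent of $w$ and contributes nothing to the divergence, leaving precisely
\begin{equation*}
\operatorname{div}(X) = \frac{1}{2} \partial_{y^a}(u^{ab} \chi_b).
\end{equation*}
There is no substantive obstacle; the only points requiring care are tracking the factor of $\frac{1}{2}$ from $\partial_{z^b} = \frac{1}{2}(\partial_{x^b} - i \partial_{w^b})$ and applying Legendre duality correctly to identify the Jacobians with inverse Hessians.
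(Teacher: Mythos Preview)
Your argument is correct. The paper takes a slightly different route: rather than invoking the Darboux property to reduce the divergence to an ordinary Euclidean divergence in the $(y,w)$-coordinates, it computes $\nabla_a X^a$ directly in the complex chart using the K\"ahler Christoffel symbols, obtaining
\[
\nabla_a X^a=\tfrac{1}{2}\bigl(\partial_{x^a}X^a + X^b\,v^{ac}\,\partial_{x^a} v_{bc}\bigr),
\]
and then converts each term via the Legendre duality identities to
\[
\tfrac{1}{2}\bigl(u^{ab}\partial_{y^b}\chi_a + \chi_b\,\partial_{y^c}u^{bc}\bigr)=\tfrac{1}{2}\,\partial_{y^a}\bigl(u^{ab}\chi_b\bigr).
\]
Your approach is more geometric: it identifies the K\"ahler volume form with the symplectic volume form $\omega^n/n!$ and uses that the latter is constant in Darboux coordinates, so the divergence is the naive sum of partials and no Christoffel terms appear at all. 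The paper's approach is a direct local computation that keeps the intermediate expression in complex coordinates visible. Both are equally short; yours has the small conceptual advantage of explaining \emph{why} no metric factor survives, while the paper's version makes the passage through the Christoffel term explicit.
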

\begin{proof}
Using the fundamental relations of Legendre duality, we have
\begin{equation*}
\begin{split}
\nabla_aX^a=&\frac{1}{2}\left(\partial_{x^a}X^a(x)+X^b(x)v^{ac}\partial_av_{bc}\right)\\
=&\frac{1}{2}\left(u^{ab}\partial_{y^b}\chi_a(y)+\chi_b(y)\partial_{y^c}u^{bc}\right)=\frac{1}{2}\partial_{y^a}\left(u^{ab}\chi_b(y)\right).
\end{split}
\end{equation*}\qedhere
\end{proof}
Let us consider the complex moment map equation
\begin{equation*} 
m^{\bb{C}}_{(J,\alpha)}(h) = 0, \, h \in \mathfrak{ham}(M, \omega).
\end{equation*}
According to~\cite{ScarpaStoppa_hyperk_reduction} this is equivalent to the linear PDE
\begin{equation*}
\operatorname{div}(\diff^* A^{0,1}) = 0
\end{equation*}
where~$A = \alpha^{\vee} = A^{1,0} + A^{0,1}$ is the type decomposition of the endomorphism~$A$ of~$T_{\bb{C}}M$ dual to the cotangent vector~$\alpha$ and~$\diff^*$ denotes the formal adjoint with respect to~$g_J=\omega(-,J-)$. Using complex coordinates we may write
\begin{equation*}
A^{0,1}=A^{\bar{b}}_c\,dz^c\otimes\diff_{\bar{z}^b}=g^{a\bar{b}}\varphi_{ac}\,dz^c\otimes\diff_{\bar{z}^b}
\end{equation*}
for some bilinear form~$\varphi$ on the fibres of~$T^{1,0}M$. The required compatibility between~$A$,~$J$ and~$\omega$ is equivalent to symmetry of this bilinear form, so we have in fact~$\varphi\in\operatorname{Sym}^2({T^{1,0}}^*M)$. This is true in general, and in our present~$\mathbb{T}^n$-invariant situation we have moreover~$A^{0,1}=v^{ac}(x)\varphi_{cb}(x)dz^b\otimes\diff_{\bar{z}^a}$. 

We are now in a position to express the complex moment map equation in symplectic coordinates. Let us write~$\xi$ for the image of the matrix-valued function~$\varphi$ under Legendre duality, i.e.~$\xi^{ab}(y) = \varphi_{ab}(x)$. Then we have~$A^{0,1}(x) = u_{ac}(y) \xi^{cb}(y)$, and 
\begin{equation*}
\begin{split}
\left(\diff^*A^{0,1}\right)^a=&-g^{\bar{c}b}\nabla_{\bar{c}}A^{\bar{a}}_b =-\frac{1}{2}v^{cb}\left(\diff_c(v^{ad}\varphi_{bd})+v^{ed}\varphi_{bd}v^{af}\diff_cv_{ef}\right)=\\
=&-\frac{1}{2}\left(\diff_{y^b}(u_{ad}\xi^{bd})+u_{ed}\xi^{bd}u_{af}\diff_{y^b}u^{ef}\right)=-\frac{1}{2}u_{ad}\diff_{y^b}\xi^{bd}
\end{split}
\end{equation*}
so by Lemma~\ref{lemma:divergenza_coord_sympl} we find
\begin{equation*}
\operatorname{div}\left(\diff^*A^{0,1}\right)=-\frac{1}{4}\diff_{y^c}\left(u^{ac}u_{ad}\diff_{y^b}\xi^{bd}\right)=-\frac{1}{4}\diff_{y^c}\diff_{y^b}\xi^{bc}.
\end{equation*}
Thus the vanishing condition~$m^{\bb{C}}_{(J,\alpha)}(h) = 0, \, h \in \mathfrak{ham}(M, \omega)$ is equivalent to the linear PDE 
$\xi^{ab}_{,ab}=0$
appearing in~\eqref{eq:HcscK_squareroot}.

We turn to the real moment map equation of the HcscK system, starting from the expression of the moment map in Theorem~\ref{thm:real_mm_expl}. We need a preliminary result about the local coordinate expression of the tensor~$\hat{\alpha}$ .

\begin{lemma}\label{lemma:reale}
Let~$\check{\alpha}\indices{^a_b}(y):=\hat{\alpha}\indices{_a^b}(x)$ be the coordinate expression of~$\hat{\alpha}$ in symplectic coordinates, so that~$\check{\alpha}=\frac{1}{2}\left(\mathbbm{1}+\left(\mathbbm{1}-\xi\,D^2u\,\bar{\xi}\,D^2u\right)^{\frac{1}{2}}\right)^{-1}$. Then~$\left(\check{\alpha}\,\xi\,D^2u\,\bar{\xi}\right)^{ab}_{,ab}$ is a real quantity.
\end{lemma}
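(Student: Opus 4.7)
The plan is to prove the stronger statement that the complex matrix-valued function $N:=\check\alpha\,\xi\,D^2u\,\bar\xi$ is pointwise Hermitian, in the sense $N^{\transpose}=\bar N$. Once this is established the conclusion is immediate: since the second-order operator $\partial_a\partial_b$ is symmetric in $a,b$, relabeling the dummy indices yields $\sum_{a,b}\partial_a\partial_b N^{ab}=\sum_{a,b}\partial_a\partial_b N^{ba}$, and combining with Hermiticity gives
\begin{equation*}
\overline{N^{ab}_{,ab}}=\bar N^{ab}_{,ab}=N^{ba}_{,ab}=N^{ab}_{,ab}.
\end{equation*}

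To verify the Hermiticity of $N$, I abbreviate $T:=D^2u$ and $B:=\xi T\bar\xi T$, so that $\check\alpha=h(B)$ for the spectral function $h(x)=\tfrac{1}{2}(1+\sqrt{1-x})^{-1}$, and correspondingly $\bar{\check\alpha}=h(\bar B)$. The key intermediate identity is
\begin{equation*}
\bar{\check\alpha}=T^{-1}\,\check\alpha^{\transpose}\,T.
\end{equation*}
To derive it I use the symmetry of $\xi$, $\bar\xi$ and $T$ to compute $B^{\transpose}=T\bar\xi T\xi$, whence $\bar B=\bar\xi T\xi T=T^{-1}B^{\transpose}T$ is similar to $B^{\transpose}$. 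Applying the functional calculus to both sides, together with $h(B^{\transpose})=h(B)^{\transpose}=\check\alpha^{\transpose}$, produces the identity.

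With this in hand the argument closes in one line. Because $\check\alpha$ is a spectral function of $B$ it commutes with $B$, so $\check\alpha^{\transpose}$ commutes with $B^{\transpose}$; combined with $\bar\xi T\xi=T^{-1}B^{\transpose}$ (read off from $B^{\transpose}=T\bar\xi T\xi$) this gives
\begin{equation*}
N^{\transpose}=\bar\xi T\xi\,\check\alpha^{\transpose}=T^{-1}B^{\transpose}\check\alpha^{\transpose}=T^{-1}\check\alpha^{\transpose}B^{\transpose}=\bar{\check\alpha}\,\bar\xi T\xi=\bar N.
\end{equation*}
The main obstacle I anticipate is purely notational: carefully keeping track of the interaction between complex conjugation, matrix transposition, and the spectral functional calculus for the non-Hermitian matrix $B$. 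Once the identity $\bar{\check\alpha}=T^{-1}\check\alpha^{\transpose}T$ is recognized, the remainder reduces to the short algebraic check above.
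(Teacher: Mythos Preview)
Your proof is correct and the overall strategy coincides with the paper's: both arguments establish that the matrix $N=\check\alpha\,\xi\,D^2u\,\bar\xi$ is pointwise Hermitian, $N^{\transpose}=\bar N$, and then invoke the symmetry of $\partial_a\partial_b$ to conclude. The only difference lies in the intermediate identity. The paper works from the complex-coordinate relation $\hat\alpha\,\alpha=\alpha\,\bar{\hat\alpha}$ (obtained via the explicit formula~\eqref{eq:hat_alpha}) and translates it into $\check\alpha\,\xi\,D^2u=\xi\,D^2u\,\bar{\check\alpha}$, together with $\bar{\check\alpha}\,\bar\xi=\bar\xi\,\bar{\check\alpha}^{\transpose}$; your route instead stays entirely in symplectic coordinates and derives the conjugation identity $\bar{\check\alpha}=T^{-1}\check\alpha^{\transpose}T$ directly from $\bar B=T^{-1}B^{\transpose}T$ via functional calculus. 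Your version is slightly more self-contained in that it avoids the detour through~\eqref{eq:hat_alpha}, but the two arguments are really reorderings of the same algebra.
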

\begin{proof}
Using the formula~\eqref{eq:hat_alpha} for~$\hat{\alpha}$, we see that~$\bar{\alpha}\hat{\alpha}=\bar{\hat{\alpha}}\bar{\alpha}$ and~$\hat{\alpha}\alpha=\alpha\bar{\hat{\alpha}}$. Turning to the expression of~$\alpha$ in symplectic coordinates, this translates to~$\check{\alpha}\,\xi\,D^2u=\xi\,D^2u\,\bar{\check{\alpha}}$. Since~$\xi$ is symmetric, we also have~$\bar{\check{\alpha}}\bar{\xi}=\bar{\xi}\bar{\check{\alpha}}^\transpose$, so that
\begin{equation*}
\left(\check{\alpha}\,\xi\,D^2u\,\bar{\xi}\right)^{ab}_{,ab}=\left(\xi\,D^2u\,\bar{\xi}\,\overline{\check{\alpha}^{\transpose}}\right)^{ab}_{,ab}=\left(\left(\xi\,D^2u\,\bar{\xi}\,\overline{\check{\alpha}^{\transpose}}\right)^\transpose\right)^{ab}_{,ab}=\left(\overline{\check{\alpha}}\,\bar{\xi}\,D^2u\,\xi\right)^{ab}_{,ab}.{}
\end{equation*}\qedhere
\end{proof}
Recall from Theorem~\ref{thm:real_mm_expl} that the real moment map can be written as
\begin{equation}\label{eq:real_HcscK}
2\left(s(g_J)-\hat{s}\right)+\operatorname{div}X(J,\alpha)=0.
\end{equation}
Using Abreu's formula for the scalar curvature, together with the fact that~$\hat{s}=0$ on~$M$, we know
\begin{equation*}
2\left(s(g_J)-\hat{s}\right)=-\frac{1}{2}u^{ab}_{,ab}-2\hat{s}=-\frac{1}{2}u^{ab}_{,ab}.
\end{equation*}
In order to prove Theorem~\ref{thm:HcscKSymplCoordThm} we will compute the divergence term of~\eqref{eq:real_HcscK} in symplectic coordinates. We will use the following fact: if~$X$ is a real vector field and we decompose it as~$X=X^{1,0}+X^{0,1}$, we have~$X^{0,1}=\overline{X^{1,0}}$, so~$X=2\,\Re (X^{1,0})$ and
\begin{equation*}
\operatorname{div}(X)=\operatorname{div}\left(2\,\Re (X^{1,0})\right)=2\,\Re \left(\operatorname{div}(X^{1,0})\right).
\end{equation*}

\begin{proof}[Proof of Theorem~\ref{thm:HcscKSymplCoordThm}]
Fix a symplectic potential~$u$, and let~$G=D^2u$ be the Hessian of~$u$. The~$(1,0)$-part of the vector field~$X(J,\alpha)$ in~\eqref{eq:real_HcscK} is
\begin{equation}\label{eq:X_10}
X(J,\alpha)^a=g(\nabla^a\alpha,\bar{\alpha}\,\hat{\alpha})-g(\nabla^a\bar{\alpha},\alpha\,\bar{\hat{\alpha}})-2\nabla^*(\alpha\bar{\alpha}\,\hat{\alpha})^a.
\end{equation}
We examine the three terms in~\eqref{eq:X_10} separately:
\begin{align*}
& g(\nabla^a\alpha,\bar{\alpha}\,\hat{\alpha})=\frac{1}{2}\diff_{y^a}\xi^{bc}\,u_{bd}u_{ep}\bar{\xi}^{dp}\check{\alpha}\indices{^e_c}\\
& g(\nabla^a\bar{\alpha},\alpha\,\bar{\hat{\alpha}})=g(\nabla^a\bar{\alpha},\hat{\alpha}\alpha)=\frac{1}{2}\left(\diff_{y^a}\bar{\xi}^{bc}u_{bd}\check{\alpha}\indices{^d_e}\xi^{ep}u_{pc}+2\diff_{y^a}u_{bd}\bar{\xi}^{bc}\check{\alpha}\indices{^d_e}\xi^{ep}u_{pc}\right)\\
& \nabla^*(\alpha\bar{\alpha}\hat{\alpha})^a=-\frac{1}{2}\diff_{y^c}\left(\xi^{cp}u_{dp}\bar{\xi}^{db}u_{be}\check{\alpha}\indices{^e_a}\right).
\end{align*}
Putting everything together we have
\begin{equation}\label{eq:symplectic_acc1}
\begin{split}
2\,X^a({x})=&\diff_{a}\xi^{cb}\,u_{bd}(\bar{\xi}G)\indices{^d_e}\,\check{\alpha}\indices{^e_c}-\diff_{a}\bar{\xi}^{bc}\,u_{bd}\,\check{\alpha}\indices{^d_e}(\xi G)\indices{^e_c}-\\
&-2\diff_{y^a}u_{bd}\,\bar{\xi}^{bc}\check{\alpha}\indices{^d_e}(\xi G)\indices{^e_c}+2\diff_{y^c}\left((\xi G)\indices{^c_d}(\bar{\xi}G)\indices{^d_e}\check{\alpha}\indices{^e_a}\right).
\end{split}
\end{equation}
Since~$\check{\alpha}\,\xi\,G=\xi\,G\,\bar{\check{\alpha}}$ we can rewrite the first two terms in~\eqref{eq:symplectic_acc1} as
\begin{equation*}
\begin{gathered}
\diff_{a}\xi^{cb}\,u_{bd}(\bar{\xi}G)\indices{^d_e}\,\check{\alpha}\indices{^e_c}-\diff_{a}\bar{\xi}^{bc}\,u_{bd}\,\check{\alpha}\indices{^d_e}(\xi G)\indices{^e_c}=\\=\diff_{a}\xi^{cb}\,u_{bd}(\bar{\xi}G)\indices{^d_e}\,\check{\alpha}\indices{^e_c}-\diff_{a}\bar{\xi}^{bc}\,u_{bd}\,\bar{\check{\alpha}}\indices{^e_c}\,(\xi G)\indices{^d_e}
\end{gathered}
\end{equation*}
and in particular this quantity is purely imaginary; as we are interested in the real part of the divergence of~$X^{1,0}$, we can ignore these two terms in the computation. For the other terms in~\eqref{eq:symplectic_acc1} we have instead
\begin{align*}
&-2\diff_{y^a}u_{bd}\,(\check{\alpha}\xi G\bar{\xi})^{db}+2\diff_{y^c}(\xi G\bar{\xi}G\check{\alpha})\indices{^c_a}=\\&=-2\diff_{y^a}u_{bd}\,(\check{\alpha}\xi G\bar{\xi})^{db}+2\diff_{y^c}(\check{\alpha}\xi G\bar{\xi}G)\indices{^c_a}=\\
&=-2\diff_{y^a}u_{bd}\,(\check{\alpha}\xi G\bar{\xi})^{db}
+2\,u_{ea}\,\diff_{y^c}(\check{\alpha}\xi G\bar{\xi})^{ce}
+2\,\diff_{y^c}u_{ea}\,(\check{\alpha}\xi G\bar{\xi})^{ce}=\\
&=2\,u_{ea}\,\diff_{y^c}(\check{\alpha}\xi G\bar{\xi})^{ce}
\end{align*}
and finally we conclude the computation of the divergence term using Lemma~\ref{lemma:divergenza_coord_sympl} and Lemma~\ref{lemma:reale}:
\begin{equation*}
\mrm{div}(X(J,\alpha))=\Re\left(\diff_{y^a}\diff_{y^b}\left(\check{\alpha}\,\xi\,G\,\bar{\xi}\right)^{ab}\right)=\diff_{y^a}\diff_{y^b}\left(\check{\alpha}\,\xi\,G\,\bar{\xi}\right)^{ab}.
\end{equation*}
Combining Abreu's formula for the scalar curvature and this expression for the divergence of~$X(J,\alpha)$, we can write the real moment map equation as
\begin{equation}\label{eq:HcscKSymplRealEq}
\left(G^{-1}-\left(\mathbbm{1}+\left(\mathbbm{1}-\xi\,G\,\bar{\xi}\,G\right)^{\frac{1}{2}}\right)^{-1}\!\!\!\xi\,G\,\bar{\xi}\right)^{ab}_{,ab}=0. 
\end{equation}
To conclude the proof, notice that~$x\left(1+\sqrt{1-x}\right)^{-1}=1-\sqrt{1-x}$, so that in a basis that diagonalises~$\xi\,G\,\bar{\xi}\,G$ we get
\begin{equation*}
\mathbbm{1}-\left(\mathbbm{1}+\left(\mathbbm{1}-\xi\,G\,\bar{\xi}\,G\right)^\frac{1}{2}\right)^{-1}\!\!\!\xi\,G\,\bar{\xi}\,G=\left(\mathbbm{1}-\xi\,G\,\bar{\xi}\,G\right)^{\frac{1}{2}}
\end{equation*}
and~\eqref{eq:HcscKSymplRealEq} then becomes~$\left(\left(\mathbbm{1}-\xi\,G\,\bar{\xi}\,G\right)^{\frac{1}{2}}G^{-1}\right)^{ab}_{,ab}=0$.\qedhere
\end{proof}

The same proof also works in the toric case, as it only uses the Legendre duality between K\"ahler potentials and symplectic potentials. Thus the HcscK equations in the interior of a Delzant polytope~$P$ are given by 
\begin{equation*}
\begin{dcases}
\xi^{ab}_{,ab}=0\\
\left(\left(\mathbbm{1}-\xi\,D^2u\,\bar{\xi}\,D^2u\right)^{\frac{1}{2}}D^2u^{-1}\right)^{ab}_{,ab}=-C
\end{dcases}
\end{equation*}
for~$C=4\hat{s}$. We need to impose some boundary conditions on the symplectic potential~$u$ and the Higgs term~$\xi$, to guarantee that a solution of the HcscK system on~$P$ can be extended to a solution on the whole toric manifold~$M$. We start by recalling the boundary conditions for the symplectic potentials, following Abreu~\cite{Abreu_toric}.

There is a standard complex structure~$J_P$ on~$P^\circ\times\bb{T}^n$, whose boundary behaviour allows it to be extended to a complex structure on the whole manifold~$M_P$; moreover,~$(M_P,J_P)$ is~$\bb{T}^n$-equivariantly biholomorphic to~$(M,J)$. We recall from~\cite{Guillemin_toric} the construction of~$J_P$: let~$S_1,\dots,S_r$ be the faces of the polytope~$P$, defined as~$S_r:=\set*{\ell^r({y})=0}$ for some affine-linear functions~$\ell^r$ such that
\begin{equation}\label{eq:polytope_eq}
P=\set*{{y}\in\bb{R}^n\tc\ell^r({y})\geq 0\ \forall r}.
\end{equation}
Then the potential~$u_P$ for the canonical complex structure~$J_P$ is
\begin{equation}\label{eq:can_potential}
u_P:=\sum_r\ell^r({y})\,\mrm{log}\,\ell^r({y}).
\end{equation}
The following result of Abreu describes all possible integrable compatible complex structures~$J$ on~$P$; more precisely, it shows that any~$\bb{T}^n$-invariant complex structure on~$M$ has the same behaviour of~$J_P$ near the boundary of the moment polytope: this boundary behaviour is what we refer as ``Guillemin's boundary conditions'' for a symplectic potential on the polytope~$P$.
\begin{theorem}[\cite{Abreu_toric}, Thm.~$2.8$]\label{thm:complex_str}
Every integrable compatible complex structure~$J$ is given by a potential~$u=u_P+h$, where~$u_P({y})$ is defined by~\eqref{eq:can_potential}, and~$h$ is a smooth function on the whole polytope such that~$\mrm{Hess}_{{y}}u$ is positive definite on~$P^\circ$ and has determinant of the form
\begin{equation*}
\mrm{det}(\mrm{Hess}_{{y}}(u))=\left(\delta({y})\prod_{r}\ell^r({y})\right)^{-1}
\end{equation*}
for some strictly positive function~$\delta\in\m{C}^0(P)$.
\end{theorem}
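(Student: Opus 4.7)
\medskip

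\noindent\textbf{Proof proposal.} The plan is to reduce the statement to a local analysis near the boundary of~$P$, using the fact that any toric symplectic manifold is locally modelled, in an equivariant way, on an open subset of~$\bb{C}^k\times(\bb{C}^*)^{n-k}$ near a stratum of codimension~$k$. In the interior~$P^\circ$ the claim is essentially tautological: a~$\bb{T}^n$-invariant compatible complex structure on~$P^\circ\times\bb{T}^n$ must be represented by a matrix~$J({y},{w})=\begin{pmatrix} 0 & -G^{-1} \\ G & 0\end{pmatrix}$ for some positive definite symmetric~$G=G({y})$, and the Nijenhuis tensor vanishes if and only if~$G=\mrm{Hess}_{{y}}(u)$ for some strictly convex function~$u$ on~$P^\circ$. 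So the heart of the theorem is the characterisation of which potentials~$u$ arise from a genuine complex structure on the whole of~$M$, i.e. from a potential that extends smoothly across~$\partial P$.

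Next I would take the standard toric model near a fixed point. Near a vertex of~$P$, say the origin in the polytope~$\set*{y^i\geq 0}\subset\bb{R}^n$, the toric manifold is equivariantly biholomorphic to a neighbourhood of~$0$ in~$\bb{C}^n$ with the standard action. A direct computation shows that the symplectic potential of the standard flat structure is exactly~$u_P({y})=\sum_i y^i\log y^i$ up to an affine-linear function, whose Hessian has determinant~$\bigl(\prod_i y^i\bigr)^{-1}$. The same local computation, applied to the model near any face of the polytope (writing~$P$ locally as~$\set*{\ell^r\geq 0}$ and passing to the faces transverse coordinates~$\ell^r$), shows that~$u_P$ as defined in~\eqref{eq:can_potential} gives the canonical complex structure~$J_P$ and that its Hessian has determinant~$\bigl(\delta_P({y})\prod_r\ell^r({y})\bigr)^{-1}$ for a strictly positive smooth~$\delta_P$.

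At this point I would argue that any other compatible integrable~$J$ differs from~$J_P$ by a smoothly bounded perturbation. The key point is that the Kähler form~$\omega$ is fixed, so the difference~$J-J_P$ can be translated, via complex coordinates for~$J_P$, into an~$\omega$-compatible deformation of the reference structure; by the Guillemin--Abreu equivariance, this deformation is determined on each local model by a smooth~$\bb{T}^n$-invariant function~$f$ on~$\bb{C}^n$. Under Legendre duality, adding such a smooth~$f$ to the Kähler potential corresponds exactly to adding a smooth function~$h\in\m{C}^\infty(P)$ to the symplectic potential, so~$u=u_P+h$. Since the positivity of~$D^2 u$ on~$P^\circ$ is built into~$u$ being a symplectic potential, and the determinant formula transforms under addition of~$h$ by multiplying~$\delta_P$ by a positive smooth factor, the required form of~$\det(\mrm{Hess}_{{y}}u)$ follows.

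The main obstacle, and the step requiring real care, is the last one: showing that~$h=u-u_P$ is genuinely smooth up to the boundary (and not just bounded), and that the local smooth extensions across each face patch together coherently. This is handled by the equivariant Darboux--Weinstein theorem applied simultaneously to~$\omega$ and to both complex structures~$J$ and~$J_P$ on the standard local model, ensuring that the Kähler potential of~$\omega$ with respect to~$J$ differs from that with respect to~$J_P$ by a globally smooth function. The inverse Legendre transform then produces the desired~$h\in\m{C}^\infty(P)$, and strict positivity of~$\delta$ on the closed polytope follows from non-degeneracy of~$\omega$ paired with smoothness of~$J$ at the boundary strata.
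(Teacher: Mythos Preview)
The paper does not prove this theorem: it is quoted verbatim as Theorem~2.8 of Abreu~\cite{Abreu_toric} and used as a black box to set up the boundary conditions for symplectic potentials. There is therefore no ``paper's own proof'' to compare your proposal against.

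That said, your sketch is a reasonable outline of how Abreu's argument goes, and the ingredients you identify (local toric models near faces, the explicit canonical potential~$u_P$ on the model, and Legendre duality to pass between K\"ahler and symplectic potentials) are the correct ones. The one place where your sketch is too loose is the smoothness of~$h=u-u_P$ up to~$\partial P$: appealing to ``the equivariant Darboux--Weinstein theorem applied simultaneously to~$\omega$ and to both complex structures'' is not quite the mechanism. In Abreu's proof one works directly in the complex chart~$\bb{C}^n$ near a vertex, writes the K\"ahler potential of~$\omega$ (for the given~$J$) as a smooth~$\bb{T}^n$-invariant function of~$|z_1|^2,\dots,|z_n|^2$, and then computes its Legendre transform explicitly; the logarithmic singularities of~$u_P$ appear precisely because the change of variables~$y^i=|z_i|^2$ degenerates at the coordinate hyperplanes, and one checks by hand that subtracting~$u_P$ leaves a remainder smooth in the~$y^i$. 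The determinant formula then drops out of the same computation. Your invocation of Darboux--Weinstein would only give a symplectomorphism, not control on the complex structure, so as written that step does not close.
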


Using Theorem~\ref{thm:complex_str} we can describe all the~$\bb{T}^n$-invariant deformations of a complex structure~$J$ in symplectic coordinates, proving Proposition~\ref{proposition:toric_boundarycond}.

Assume that~$J$ is a complex structure on the toric manifold~$(M,\omega)$. The tangent space of~$\m{J}$ at~$J$ is given by tensors~$\dot{J}\in\Gamma(M,\mrm{End}(TM))$ satisfying the relations
\begin{equation}\label{eq:defJ_relations}
J\dot{J}+\dot{J}J=0\mbox{ and }\omega(J-,\dot{J}-)+\omega(\dot{J}-,J-)=0.
\end{equation}
In the symplectic coordinates on~$P^\circ$, the complex structure is given by
\begin{equation*}
J=\begin{pmatrix}0 &-D^2u^{-1}\\ D^2u & 0\end{pmatrix}
\end{equation*}
for some symplectic potential~$u$. The conditions~\eqref{eq:defJ_relations} imply that any deformation~$\dot{J}$ of the complex structure is given on~$P^\circ$ by a matrix-valued function  
\begin{equation}\label{eq:deformation_dotJ}
\dot{J}=\begin{pmatrix} D^2u^{-1}A & D^2u^{-1}\,B\,D^2u^{-1}\\ B &- A\,D^2u^{-1}\end{pmatrix}
\end{equation}
for~$A$,~$B$ symmetric matrices. In particular 
\begin{equation*}
\dot{J}=\begin{pmatrix} 0 & D^2u^{-1}\,D^2\varphi\,D^2u^{-1}\\ D^2\varphi & 0\end{pmatrix}.
\end{equation*}
corresponds to the path of symplectic potentials~$u+\varepsilon\varphi$ for~$\varphi\in\m{C}^\infty(P)$. Note that~$J\dot{J}$ is also an integrable first-order deformation of~$J$, so any function given by~\eqref{eq:deformation_dotJ} for two Hessian matrices~$A,B\in\m{C}^\infty(P,\bb{R}^{n\times n})$ corresponds to an integrable first-order deformation of~$J$. Theorem~\ref{thm:complex_str} implies that any integrable deformation of~$J$ can be written in this way. If we do not assume~$A$ and~$B$ to be Hessians, but rather just symmetric matrices, we obtain deformations of the complex structure which are not integrable. This description of first-order deformations of the complex structure~\eqref{eq:deformation_dotJ} allows us to find the correct boundary behaviour for a Higgs term~$\xi$.

\begin{proof}[Proof of Proposition~\ref{proposition:toric_boundarycond}]
Let~$J$ be a complex structure on the toric manifold~$M$, corresponding to a symplectic potential~$u$ on the Delzant polytope~$P$. Consider also a first-order deformation~$\dot{J}$ of~$J$, defined by two symmetric matrix-valued functions~$A,B\in\m{C}^\infty(P,\bb{R})$ as in~\eqref{eq:deformation_dotJ}. To~$J$ and~$\dot{J}$ corresponds a Higgs matrix~$\xi$; the relation between~$\xi$,~$u$ and~$\dot{J}$ is given by 
\begin{equation*}
\xi^{ac}(y)=\dot{J}\indices{^{\bar{b}}_a}(x)\,g_{c\bar{b}}(x),
\end{equation*}
where~$\dot{J}\indices{^{\bar{b}}_a}(x)\mrm{d}z^a\otimes\diff_{\bar{z}^b}$ is the~$(0,1)$-part of~$\dot{J}$. If we let~$G:=D^2u$, the system of holomorphic coordinates for~$J$ is described by the vector fields~$\diff_{z^a}=\frac{1}{2}\left(G^{ab}\diff_{y^b}-\I\diff_{w^a}\right)$, so the~$(0,1)$-part of~$\dot{J}$ is
\begin{equation*}
\begin{split}
\dot{J}\indices{^{\bar{b}}_a}=&\mrm{d}\bar{z}^b\left(\dot{J}(\diff_{z^a})\right)=\frac{1}{2}\left(G_{bc}\mrm{d}y^c-\I\mrm{d}w^b\right)\left(G^{ad}\dot{J}(\diff_{y^d})-\I\dot{J}(\diff_{w^a})\right)=\\
=&\frac{1}{2}\left(G_{bc}G^{ad}G^{ef}A_{fd}\delta^c_e+\delta^{ef}A_{fp}G^{pq}\delta_{qa}\delta^b_e
\right)-\\
&-\frac{\I}{2}\left(G_{bc}G^{ef}B_{fp}G^{pq}\delta_{aq}\delta^c_e+G^{ad}\delta^{ef}B_{fd}\delta^b_e\right)=G^{ad}A_{db}-\I G^{ad}B_{db}.
\end{split}
\end{equation*}
For the matrix~$\xi$ then we get
\begin{equation}\label{eq:def_cs_sympcoord}
\xi^{ac}=\left(G^{ad}A_{db}-\I G^{ad}B_{db}\right)G^{cb}=\left(G^{-1}\left(A-\I B\right)G^{-1}\right)^{ac}.
\end{equation}
We can rephrase this as follows: let~$u$ be a symplectic potential, defining a complex structure~$J$ on the toric manifold~$M$. A symmetric matrix-valued function~$\xi$ comes from a~$\bb{T}^n$-invariant Higgs term~$\alpha\in T^*_J\!\m{J}$ if and only if~$D^2u\,\xi\,D^2u=A-\I\,B$ is smooth on~$P$.

So we see that the boundary behaviour of~$\xi$ is determined by that of~$u$. Indeed since~$u$ satisfies Guillemin's boundary conditions (see Theorem~\ref{thm:complex_str}),~$u$ is written as~$u_P+h$ for some~$h\in\m{C}^\infty(P)$. Then
\begin{align*}
&D^2u\,\xi\,D^2u=\left(D^2u_P+D^2h\right)\xi\left(D^2u_P+D^2h\right)=\\
&=\left(\mathbbm{1}+D^2h\,D^2u_P^{-1}\right)D^2u_P\,\xi\,D^2u_P\left(\mathbbm{1}+D^2u_P^{-1}D^2h\right)
\end{align*}
and~$D^2u_P^{-1}$ is smooth on~$P$, so~$D^2u\,\xi\,D^2u$ is smooth on the whole~$P$ if and only if~$D^2u_P\,\xi\,D^2u_P$ is.\qedhere
\end{proof}

We now digress for a moment in order to study the integrability condition of a Higgs term~$\alpha$ from the ``symplectic'' point of view, taking cue from the description of~$\xi$ in equation~\eqref{eq:def_cs_sympcoord}. Notice first that~\eqref{eq:def_cs_sympcoord} holds in both the toric and the periodic settings, the only difference is that in the abelian case we should assume periodicity of~$D^2u$,~$A$ and~$B$. 

The integrability condition for the deformation of the complex structure~$J$ induced by a Higgs term~$\alpha$, is the first-order Maurer-Cartan equation
\begin{equation}\label{eq:integrability}
\diff\alpha=0.
\end{equation}
The next result expresses~\eqref{eq:integrability} in terms of our symplectic coordinates.
\begin{proposition}\label{proposition:integrability_xi}
Let~$u$ be a symplectic potential, associated to an integrable complex structure~$J$, and let~$\xi$ be a symmetric matrix corresponding to a Higgs term~$\alpha\in T^*_J\m{J}$. Then~$\xi$ comes from an integrable deformation of the complex structure~$J$ if and only if~$D^2u\,\xi\,D^2u$ is the Hessian matrix of a function.
\end{proposition}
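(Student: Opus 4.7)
The plan is to use the explicit description of $\bb{T}^n$-invariant first-order deformations of $J$ in symplectic coordinates that was already derived in the proof of Proposition~\ref{proposition:toric_boundarycond}. Recall from equation~\eqref{eq:def_cs_sympcoord} that the Higgs tensor $\xi$ and the pair of symmetric matrices $(A,B)$ defining $\dot{J}$ via~\eqref{eq:deformation_dotJ} are related by
\[
D^2u\,\xi\,D^2u = A - \I B.
\]
Thus the proposition reduces to characterising, among all first-order deformations $\dot{J}$, the integrable ones in terms of the pair $(A,B)$.

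First I would recall the observation made in the paragraph preceding the proposition: for a real function $\varphi$, the one-parameter family of symplectic potentials $u + t\varphi$ produces an integrable first-order deformation $\dot{J}$ whose associated matrices are $A = 0$, $B = D^2\varphi$. Since $J$ is itself integrable, $J\dot{J}$ is again an integrable first-order deformation; a direct matrix computation shows that $J\dot{J}$ corresponds to the pair $(-D^2\varphi, 0)$. By linearity, every pair $(A,B) = (D^2\psi_1, D^2\psi_2)$ with $\psi_1,\psi_2 \in \m{C}^\infty$ real arises from an integrable deformation. Conversely, Abreu's Theorem~\ref{thm:complex_str} identifies the space of $\bb{T}^n$-invariant integrable compatible complex structures with the set of symplectic potentials $u_P + h$, $h \in \m{C}^\infty(P)$ (in the toric case; the analogous statement in the abelian case is elementary), so linearising this identification shows that every integrable first-order deformation is of the above form, i.e. with both $A$ and $B$ Hessians of real functions.

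Given this dictionary, the proposition follows by inspection. If $\xi$ comes from an integrable deformation, writing $A = D^2\psi_1$ and $B = D^2\psi_2$ yields
\[
D^2u\,\xi\,D^2u = D^2\psi_1 - \I D^2\psi_2 = D^2(\psi_1 - \I\psi_2),
\]
which is the Hessian of a complex-valued function. Conversely, if $D^2u\,\xi\,D^2u = D^2\varphi$ for some complex-valued $\varphi$, decomposing $\varphi$ into real and imaginary parts identifies both $A$ and $B$ as real Hessians, so the associated $\dot{J}$ is integrable.

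The one point that requires care is the converse direction in the characterisation of integrability, namely that every integrable first-order deformation necessarily arises from a pair of Hessians $(A,B)$. This is precisely the linearisation of Theorem~\ref{thm:complex_str}, and constitutes the main input into the argument; once this is granted, the rest of the proof is essentially just reading off equation~\eqref{eq:def_cs_sympcoord}.
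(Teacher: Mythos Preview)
Your approach is genuinely different from the paper's, and it has a real gap in the converse direction.

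The paper's proof is a direct computation: it writes the Maurer--Cartan condition $\partial_{z^a}\alpha_c^{\bar b}=\partial_{z^c}\alpha_a^{\bar b}$ in complex coordinates, then uses the Legendre duality relations to rewrite everything in symplectic coordinates. After a short manipulation this becomes the condition that $\partial_c H_{ab}$ is totally symmetric, where $H=D^2u\,\xi\,D^2u$; on a convex domain this is exactly the Hessian condition. No appeal to Abreu's theorem is needed.

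Your argument, by contrast, tries to read the result off from the parametrisation of integrable structures. The forward direction is fine: Hessian pairs $(A,B)$ do give integrable first-order deformations. The problem is the converse. You say that ``linearising Theorem~\ref{thm:complex_str} shows that every integrable first-order deformation is of the above form''. But linearising a surjective parametrisation $\Phi:\{\text{potentials}\}\to\mathcal I$ only tells you that the image of $D\Phi$ lies \emph{inside} the kernel $K$ of the linearised Maurer--Cartan operator; it does not tell you these coincide. Abreu's theorem makes $\mathcal I$ a smooth manifold with tangent space $\operatorname{im}D\Phi$, but the proposition concerns $K=\{\dot J:\partial\alpha=0\}$, and the inclusion $\operatorname{im}D\Phi\subset K$ could in principle be strict. (Think of a singular variety: the zero locus of the linearised defining equations can be strictly larger than the tangent space to any parametrisation.) Showing $K\subset\operatorname{im}D\Phi$ is exactly the content of the proposition, so invoking it as ``the linearisation of Theorem~\ref{thm:complex_str}'' is circular.

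You correctly flag this converse step as ``the one point that requires care'', but then treat it as an input rather than as the thing to be proved. The paper's computation is precisely what fills this gap: it verifies directly that $\partial\alpha=0$ forces $H$ to be a Hessian, without any deformation-theoretic assumptions on the smoothness of the integrable locus.
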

\begin{proof}
The integrability condition~\eqref{eq:integrability} is written more explicitly as
\begin{equation*}
\diff_{z^a}\alpha\indices{_c^{\bar{b}}}=\diff_{z^c}\alpha\indices{_a^{\bar{b}}}
\end{equation*}
and in our case it becomes
\begin{equation*}
\diff_{x^a}\left(v^{bd}q_{cd}\right)=\diff_{x^c}\left(v^{bd}q_{ad}\right)
\end{equation*}
i.e.~$\diff_{x^a}\left(v^{bd}q_{cd}\right)$ should be symmetric in the indices~$a$,~$c$. Under Legendre duality this quantity becomes, as~$u_{ij}$ is a Hessian matrix,
\begin{equation*}
\diff_{x^a}\left(v^{bd}q_{cd}\right)=u^{ae}\diff_{y^e}\left(u_{bd}\xi^{cd}\right)=u^{ae}\diff_{y^e}\left(u_{bd}\xi^{fd}u_{fg}\right)u^{gc}+u_{bd}\xi^{df}\diff_{y^f}u^{ac}.
\end{equation*}
The second expression on the right hand side is symmetric in~$a$ and~$c$, while the first
\begin{equation*}
u^{ae}\diff_{y^e}\left(u_{bd}\xi^{fd}u_{fg}\right)u^{gc}
\end{equation*}
is symmetric in~$a$ and~$c$ if and only if~$\diff_{y^e}\left(u_{bd}\xi^{fd}u_{fg}\right)$ is symmetric in~$e$ and~$g$. Thus letting~$H_{ab}(y):=u_{ac}\xi^{cd}u_{db}$, equation~\eqref{eq:integrability} is equivalent to~$\diff_c H_{ab}$ being symmetric in all indices.\qedhere
\end{proof}
 
\section{The periodic \textit{HK}-energy}\label{ConvexitySec}

In this Section we study the Biquard-Gauduchon functional~$\m{H}(u, \xi)$ introduced in~\eqref{eq:BGFunc}, and the corresponding version of the~$K$-energy~$\widehat{\m{F}}(u, \xi)$ defined in~\eqref{eq:HKEn}, proving Proposition~\ref{proposition:HKEulLagr} and Theorem~\ref{thm:convexity}.

\begin{proof}[Proof of Proposition~\ref{proposition:HKEulLagr}]
Let~$u(t) =u+t\,\varphi$ be a path of symplectic potentials in~$\mathcal{A}(\xi)$, for a periodic function~$\varphi$, and let~$G_t:=D^2u_t$. We can compute the derivative of the Biquard-Gauduchon function along the path~$\xi\,G_t\,\bar{\xi}\,G_t$ as in the proof of Theorem~\ref{thm:real_mm_expl}:
\begin{equation*}
\diff_t\rho(u_t,\xi)=\mrm{Tr}\left(\check{\alpha}\,\diff_t(\xi\,G_t\,\bar{\xi}\,G_t)\right)
\end{equation*}
From the proof of Lemma~\ref{lemma:reale} we have~$\hat{\alpha}\,\alpha=\alpha\,\bar{\hat{\alpha}}$ and~$\bar{\alpha}\,\hat{\alpha}=\bar{\hat{\alpha}}\,\bar{\alpha}$, so we can write the differential as
\begin{align*}
&\diff_t\rho(u_t,\xi)=\mrm{Tr}\left(\check{\alpha}\,\xi\,\dot{G}\,\bar{\xi}\,G\right)+\mrm{Tr}\left(\check{\alpha}\,\xi\,G\,\bar{\xi}\,\dot{G}\right)=\\
&=\mrm{Tr}\left(\bar{\check{\alpha}}\,\bar{\xi}\,G\,\xi\,\dot{G}\right)+\mrm{Tr}\left(\check{\alpha}\,\xi\,G\,\bar{\xi}\,\dot{G}\right)=2\,\Re\,\mrm{Tr}\left(\check{\alpha}\,\xi\,G\,\bar{\xi}\,\dot{G}\right).
\end{align*}
Computing the derivative of the Biquard-Gauduchon functional~\eqref{eq:BGFunc} is straightforward, since every~$\varphi\in T_u\m{A}$ is a periodic function:
\begin{equation*}
D\m{H}_{(u,\xi)}(\varphi)=\int\Re\left(\check{\alpha}\indices{^a_b}\xi^{bc}u_{cd}\bar{\xi}^{de}\varphi_{,ea}\right)\mrm{d}\mu=\int\Re\left(\check{\alpha}\,\xi\,G\,\bar{\xi}\right)^{ab}_{,ab}\varphi\,\mrm{d}\mu.
\end{equation*}
On the other hand the first variation of the (periodic)~$K$-energy gives the scalar curvature term, see~\cite{FengSzekelyhidi_abelian}, so the variation of~$\hat{\m{F}}=\m{F}+\m{H}$ is
\begin{equation*}
D\hat{\m{F}}_{(u,\xi)}(\varphi)=-\frac{1}{2}\int u^{ab}_{,ab}\varphi\,\mrm{d}\mu+\int\Re\left(\check{\alpha}\,\xi\,G\,\bar{\xi}\right)^{ab}_{,ab}\varphi\,\mrm{d}\mu.
\end{equation*}
To conclude the proof, just recall from Lemma~\ref{lemma:reale} that~$\left(\check{\alpha}\,\xi\,G\,\bar{\xi}\right)^{ab}_{,ab}$ is a real quantity.\qedhere
\end{proof}

We proceed to prove our convexity result, Theorem~\ref{thm:convexity}. We fix a linear path~$u_t = (1-t)u_0 + t u_1$, for~$t\in [0,1]$, between the symplectic potentials~$u_0$ and~$u_1$, and assume that the path lies in~$\m{A}(\xi)$, so that the \textit{HK}-energy is well-defined along~$u_t$. The proof of Theorem~\ref{thm:convexity} will be broken down in many steps. First, notice that~$\rho$ is a convex function of the eigenvalues of~$\xi\,D^2u\,\bar{\xi}\,D^2u$. Then, it will be convenient to consider~$\rho$ as
\begin{equation*}
\rho(u,\xi)=\rho\circ\delta(\xi\,D^2u\,\bar{\xi}\,D^2u)
\end{equation*}
where~$\delta$ is the vector of eigenvalues~$\left(\delta_1,\dots,\delta_n\right)$. We will assume for simplicity that these eigenvalues are all distinct along~$u_t$. The general case can be deduced from this special situation, as the Hessian of~$\widehat{\m{F}}$ is continuous in~$\m{A}(\xi)$.

Let~$G_t=D^2u_t$. The second variation of~$\widehat{\m{F}}$ along~$u_t$ is given by
\begin{align}\label{eq:HK_secondvar}
\nonumber &\diff^2_t\hat{\m{F}}(u_t)=\\
\nonumber&=\frac{1}{2}\int_X\mrm{Tr}\left(G_t^{-1}\dot{G}G_t^{-1}\dot{G}\right)\mrm{d}\mu+\frac{1}{2}\int_XD\rho\cdot\diff^2_t\delta(\xi\,G_t\,\bar{\xi}\,G_t)\,\mrm{d}\mu+\\
&+\frac{1}{2}\int_X\diff_t\delta(\xi\,G_t\,\bar{\xi}\,G_t)^\transpose\cdot D^2\rho\cdot \diff_t\delta(\xi\,G_t\,\bar{\xi}\,G_t)\,\mrm{d}\mu.
\end{align}
The third integral is positive by the convexity of~$\rho$. We will show that, while the second term might be negative, the sum of the first and second terms in~\eqref{eq:HK_secondvar} is always non-negative. The first step is to compute~$\diff^2_t\delta(\xi\,G_t\,\bar{\xi}\,G_t)$.

\subsection{Second variation of the eigenvalues}

Let~$\alpha_t=\xi\,G_t$, so that~$\xi\,G_t\,\bar{\xi}\,G_t=\alpha_t\bar{\alpha}_t$. An expression for the second derivative of~$\delta_i$ can be found in~\cite{Magnus_differential_eigenvalues} (see also~\cite[ch.$8$, \S$11$]{Magnus_matrix_diffcalc} for the case of a real, symmetric matrix): the computations in~\cite{Magnus_differential_eigenvalues} are carried out along a linear path, so they need to be adapted to our case, for which~$\alpha_t\bar{\alpha}_t$ is quadratic in~$t$. The result is expressed in terms of the matrix~$P_i:=\prod_{j\not= i}\frac{\delta_j-\alpha_t\bar{\alpha}_t}{\delta_j-\delta_i}$ as
\begin{equation}\label{eq:second_var_eigenval}
\begin{split}
\diff^2_t\delta_i=&\hphantom{+}\mrm{Tr}\left[P_i\,\diff^2_t(\alpha_t\bar{\alpha}_t)\right]+\\
&{+}\mrm{Tr}\left[2\,P_i\,\diff_t(\alpha_t\bar{\alpha}_t)\,(\mathbbm{1}-P_i)\left(\delta_i\mathbbm{1}-\alpha_t\bar{\alpha}_t\right)^+(\mathbbm{1}-P_i)\diff_t(\alpha_t\bar{\alpha}_t)\right],
\end{split}
\end{equation}
where we denote by~$A^+$ the \emph{Moore-Penrose inverse} of a matrix~$A$, depending on the choice of a Hermitian inner product. We refer to~\cite{Magnus_matrix_diffcalc} for background on the Moore-Penrose inverse of an endomorphism. The main property we use here is~$A^+=(A^*A)^+A^*$. Assuming that the eigenvalues of~$A$ are all distinct, in a basis for which~$A^*A$ is diagonal with eigenvalues~$\beta_j$, it can be checked that
\begin{equation*}
(A^*A)^+=\sum_{\underset{\beta_i\not=0}{i=1}}^n\frac{1}{\beta_i}\prod_{j\not=i}\frac{\beta_j\mathbbm{1}-A^*A}{\beta_j-\beta_i}.
\end{equation*}
As a consequence, we get a relatively easy expression for the Moore-Penrose inverse of~$\delta_i\mathbbm{1}-\alpha\bar{\alpha}$:
\begin{align*}
&\left(\delta_i\mathbbm{1}-\alpha\bar{\alpha}\right)^+=\\
&=\left(\sum_{b\not= i}\frac{1}{(\delta_i-\delta_b)^2}\prod_{l\not=b}\frac{(\delta_i-\delta_l)^2\mathbbm{1}-\left(\delta_i\mathbbm{1}-\alpha\bar{\alpha}\right)^*\left(\delta_i\mathbbm{1}-\alpha\bar{\alpha}\right)}{(\delta_i-\delta_l)^2-(\delta_i-\delta_b)^2}\right)\left(\delta_i\mathbbm{1}-\alpha\bar{\alpha}\right)^*,
\end{align*}
depending on the choice of a Hermitian pairing on~${T^{1,0}}^*M$. In our case we know that~$\alpha_t\bar{\alpha}_t$ is self-adjoint with respect to the metric defined by~$u_t$, so we can simplify this expression as
\begin{equation}\label{eq:pseudoinv_alpha}
\left(\delta_i\mathbbm{1}-\alpha\bar{\alpha}\right)^+=\left(\sum_{b\not= i}\frac{1}{(\delta_i-\delta_b)^2}\prod_{l\not=b}\frac{(\delta_i-\delta_l)^2\mathbbm{1}-\left(\delta_i\mathbbm{1}-\alpha\bar{\alpha}\right)^2}{(\delta_i-\delta_l)^2-(\delta_i-\delta_b)^2}\right)\left(\delta_i\mathbbm{1}-\alpha\bar{\alpha}\right).
\end{equation}
We can use equations~\eqref{eq:second_var_eigenval},~\eqref{eq:pseudoinv_alpha} to compute the second derivative of any eigenvalue~$\delta_i$ along the path~$u_t$. This in turn allows us to compute the second integrand in~\eqref{eq:HK_secondvar}:
\begin{equation*}
\begin{split}
&D\rho\cdot\diff^2_t\delta(\xi\,G_t\,\bar{\xi}\,G_t)=\sum_{i=1}^n\frac{1}{2}\frac{\diff^2_t\delta_i(\xi\,G_t\,\bar{\xi}\,G_t)}{1+\sqrt{1-\delta_i(\xi\,G_t\,\bar{\xi}\,G_t)}}=\\
=&\mrm{Tr}\left[\sum_i\frac{P_i}{1+\sqrt{1-\delta_i}}\,\xi\,\dot{G}\,\bar{\xi}\,\dot{G}\right]+\\
+&\mrm{Tr}\left[\sum_i\frac{P_i\,\diff_t(\xi\,G_t\,\bar{\xi}\,G_t)\,(\mathbbm{1}-P_i)\left(\delta_i\mathbbm{1}-\xi\,G_t\,\bar{\xi}\,G_t\right)^+(\mathbbm{1}-P_i)\diff_t(\xi\,G_t\,\bar{\xi}\,G_t)}{1+\sqrt{1-\delta_i}}\right]
\end{split}
\end{equation*}
and the first term in this sum can be written as
\begin{equation*}
\mrm{Tr}\left[\sum_i\frac{P_i}{1+\sqrt{1-\delta_i}}\,\xi\,\dot{G}\,\bar{\xi}\,\dot{G}\right]=\mrm{Tr}\left[\left(\mathbbm{1}+\left(\mathbbm{1}-\xi\,G\,\bar{\xi}\,G\right)^{\frac{1}{2}}\right)^{-1}\xi\,\dot{G}\,\bar{\xi}\,\dot{G}\right].
\end{equation*}
So we arrive at the expression
\begin{align}\label{eq:diff_seconda_autoval}
\nonumber &D\rho\cdot\diff^2_t\delta(\xi\,G_t\,\bar{\xi}\,G_t)=\mrm{Tr}\left[\left(\mathbbm{1}+\left(\mathbbm{1}-\xi\,G\,\bar{\xi}\,G\right)^{\frac{1}{2}}\right)^{-1}\xi\,\dot{G}\,\bar{\xi}\,\dot{G}\right]+\\
&+\mrm{Tr}\left[\sum_i\frac{P_i\,\diff_t(\xi\,G_t\,\bar{\xi}\,G_t)\,(\mathbbm{1}-P_i)\left(\delta_i\mathbbm{1}-\xi\,G_t\,\bar{\xi}\,G_t\right)^+(\mathbbm{1}-P_i)\diff_t(\xi\,G_t\,\bar{\xi}\,G_t)}{1+\sqrt{1-\delta_i}}\right].
\end{align}

\subsection{Preliminary computations}
In order to investigate the positivity of the quantity 
\begin{equation*}
\mrm{Tr}(G^{-1}\dot{G}\,G^{-1}\dot{G}) + D\rho\cdot\diff^2_t\delta
\end{equation*}
we will work in a system of coordinates for which~$\xi\,G\,\bar{\xi}\,G$ is diagonal, similarly to the proof of Theorem~\ref{thm:real_mm_expl}. We use the same notation: there is an orthogonal matrix~$S$, a positive diagonal matrix~$\Lambda$, a unitary matrix~$U$ and a positive diagonal matrix~$\Delta$ such that
\begin{align*}
& G=S^\transpose\Lambda\,S\\
&\xi=S^\transpose\Lambda^{-\frac{1}{2}}U \sqrt{\Delta}\, U^\transpose\Lambda^{-\frac{1}{2}}S
\end{align*}
and letting~$Q=S^\transpose\Lambda^{-\frac{1}{2}}U$ we get~$\xi\,G\,\bar{\xi}\,G=Q\Delta Q^{-1}$, so that~$\Delta=\mrm{diag}(\delta_1,\dots,\delta_n)$. If now we let~$R$ be the Hermitian matrix~$R:=Q^\transpose\dot{G}\bar{Q}$, we have the following identities:
\begin{align*}
&\mrm{Tr}(G^{-1}\dot{G}G^{-1}\dot{G})=\mrm{Tr}(R^2)=\sum_{i,j}\abs*{R\indices{^i_j}}^2,\\
&\mrm{Tr}\left[\left(\mathbbm{1}+\left(\mathbbm{1}-\xi\,G\,\bar{\xi}\,G\right)^{\frac{1}{2}}\right)^{-1}\xi\,\dot{G}\,\bar{\xi}\,\dot{G}\right]=\\
&=\mrm{Tr}\left[\left(\mathbbm{1}+\sqrt{\mathbbm{1}-\Delta}\right)^{-1}\sqrt{\Delta}\,R\,\sqrt{\Delta}\,\bar{R}\right]=\sum_{i,j}\frac{\sqrt{\delta_i\delta_j}}{1+\sqrt{1-\delta_i}}\left(R\indices{^i_j}\right)^2.
\end{align*}
We should also rewrite the second term of~\eqref{eq:diff_seconda_autoval}, the one involving the Moore-Penrose inverse of~$\delta-\xi\,G\,\bar{\xi}\,G$. Notice first that
\begin{equation}\label{eq:P_i_diag}
\mathbbm{1}-P_i=Q\cdot\mrm{diag}\left(1,\dots,1,\underset{\mbox{\textit{i}-th place}}{0},1,\dots,1\right)\cdot Q^{-1}
\end{equation}
while from~\eqref{eq:pseudoinv_alpha} we have
\begin{equation}\label{eq:pseudoinverse_diag}
\begin{gathered}
\left(\delta_i\mathbbm{1}-\xi\,G\,\bar{\xi}\,G\right)^+=\\
=Q\cdot\mrm{diag}\left(\frac{1}{\delta_i-\delta_1},\dots,\frac{1}{\delta_i-\delta_{i-1}},0,\frac{1}{\delta_i-\delta_{i+1}},\dots,\frac{1}{\delta_i-\delta_n}\right)\cdot Q^{-1}
\end{gathered}
\end{equation}
and for the derivative of~$\xi\,G_t\,\bar{\xi}\,G_t$ we find
\begin{align}\label{eq:derivative_diag}
\nonumber&\diff_t\left(\xi\,G_t\,\bar{\xi}\,G_t\right)=\xi\,\dot{G}\,\bar{\xi}\,G+\xi\,G\,\bar{\xi}\,\dot{G}=\\
&=Q\left(\sqrt{\Delta}\,R\sqrt{\Delta}+\Delta\,\bar{R}\right)Q^{-1}.
\end{align}
Combining~\eqref{eq:P_i_diag},~\eqref{eq:pseudoinverse_diag} and~\eqref{eq:derivative_diag}, we can compute the second term in~\eqref{eq:diff_seconda_autoval} as 
\begin{align}\label{eq:traccia_derivata_diag}
\nonumber&\mrm{Tr}\left[\sum_i\frac{P_i\,\diff_t(\xi\,G_t\,\bar{\xi}\,G_t)\,(\mathbbm{1}-P_i)\left(\delta_i\mathbbm{1}-\xi\,G_t\,\bar{\xi}\,G_t\right)^+(\mathbbm{1}-P_i)\diff_t(\xi\,G_t\,\bar{\xi}\,G_t)}{1+\sqrt{1-\delta_i}}\right]=\\
&=\mrm{Tr}\left[
\sum_i\mrm{diag}\left(0,\dots,0,\frac{1}{1+\sqrt{1-\delta_i}},0,\dots,0\right)\cdot
\left(\sqrt{\Delta}\,R\sqrt{\Delta}+\Delta\,\bar{R}\right)\cdot\right.\\
&\left.\cdot\mrm{diag}\left(\frac{1}{\delta_i-\delta_1},\dots,\frac{1}{\delta_i-\delta_{i-1}},0,\frac{1}{\delta_i-\delta_{i+1}},\dots,\frac{1}{\delta_i-\delta_n}\right)\cdot\left(\sqrt{\Delta}\,R\sqrt{\Delta}+\Delta\,\bar{R}\right)\right].
\end{align}
So if we let for the moment~$K:=\sqrt{\Delta}\,R\sqrt{\Delta}+\Delta\,\bar{R}$, the identity~\eqref{eq:traccia_derivata_diag} becomes 
\begin{align}\label{eq:traccia_derivata}
\nonumber&\mrm{Tr}\left[\sum_i\frac{P_i\,\diff_t(\xi\,G_t\,\bar{\xi}\,G_t)\,(\mathbbm{1}-P_i)\left(\delta_i\mathbbm{1}-\xi\,G_t\,\bar{\xi}\,G_t\right)^+(\mathbbm{1}-P_i)\diff_t(\xi\,G_t\,\bar{\xi}\,G_t)}{1+\sqrt{1-\delta_i}}\right]=\\
&=\sum_i\sum_{j\not=i}\frac{(\delta_i-\delta_j)^{-1}}{1+\sqrt{1-\delta_i}}K\indices{^i_j}K\indices{^j_i}.
\end{align}
Now it is straightforward to check that~$K\indices{^i_j}=\sqrt{\delta_i\delta_j}R\indices{^i_j}+\delta_i\bar{R}\indices{^i_j}$. 

The upshot of these computations is that the quantity 
\begin{equation*}
\mrm{Tr}(G^{-1}\dot{G}\,G^{-1}\dot{G}) + D\rho\cdot\diff^2_t\delta
\end{equation*}
can be written as the sum of three terms:
\begin{equation}\label{eq:tr_G}
\mrm{Tr}(G^{-1}\dot{G}G^{-1}\dot{G})=\sum_{i,j}\abs*{R\indices{^i_j}}^2;
\end{equation}
\begin{equation}\label{eq:tr_xi}
\mrm{Tr}\left[\left(\mathbbm{1}+\left(\mathbbm{1}-\xi\,G\,\bar{\xi}\,G\right)^{\frac{1}{2}}\right)^{-1}\xi\,\dot{G}\,\bar{\xi}\,\dot{G}\right]=\sum_{i,j}\frac{\sqrt{\delta_i\delta_j}}{1+\sqrt{1-\delta_i}}\left(R\indices{^i_j}\right)^2;
\end{equation}
\begin{align}\label{eq:tr_mista}
\nonumber&\sum_i\sum_{j\not=i}\frac{(\delta_i-\delta_j)^{-1}}{1+\sqrt{1-\delta_i}}K\indices{^i_j}K\indices{^j_i}=\\
&=\sum_i\sum_{j\not=i}\frac{(\delta_i-\delta_j)^{-1}}{1+\sqrt{1-\delta_i}}\left(2\delta_i\delta_j\abs*{R\indices{^i_j}}^2+\sqrt{\delta_i\delta_j}\left(\delta_i\left(R\indices{^i_j}\right)^2+\delta_j\left(\bar{R}\indices{^i_j}\right)^2\right)\right).
\end{align}

\subsection{Convexity of~$\widehat{\m{F}}$}
To conclude the proof of Theorem~\ref{thm:convexity}, we will show that the sum of the three terms~\eqref{eq:tr_G}, \eqref{eq:tr_xi} and~\eqref{eq:tr_mista} is non-negative, and vanishes only if~$\dot{G}=0$. It is convenient to decompose~$R$ in its real and imaginary parts, given by a symmetric matrix~$A$ and a skew-symmetric matrix~$B$ respectively. Then~\eqref{eq:tr_G} is
\begin{equation*}
\sum_{i,j}\abs*{R\indices{^i_j}}^2=\sum_{i,j}(A\indices{^i_j})^2+(B\indices{^i_j})^2
\end{equation*}
while~\eqref{eq:tr_xi} is written as
\begin{equation*}
\sum_{i,j}\frac{\sqrt{\delta_i\delta_j}}{1+\sqrt{1-\delta_i}}\left((A\indices{^i_j})^2-(B\indices{^i_j})^2+2\I A\indices{^i_j}B\indices{^i_j}\right)
\end{equation*}
and~\eqref{eq:tr_mista} becomes
\begin{align*}
&\sum_i\sum_{j\not=i}\frac{(\delta_i-\delta_j)^{-1}}{1+\sqrt{1-\delta_i}}2\delta_i\delta_j\left((A\indices{^i_j})^2+(B\indices{^i_j})^2\right)+\\
&+\sum_i\sum_{j\not=i}\frac{\sqrt{\delta_i\delta_j}}{1+\sqrt{1-\delta_i}}\frac{\delta_i+\delta_j}{\delta_i-\delta_j}\left((A\indices{^i_j})^2-(B\indices{^i_j})^2\right)+\\
&+\sum_i\sum_{j\not=i}-\frac{\sqrt{\delta_i\delta_j}}{1+\sqrt{1-\delta_i}}2\I A\indices{^i_j}B\indices{^i_j}.
\end{align*}
The sum of all these terms is
\begin{align}\label{eq:tr_totale}
\nonumber&\sum_{i}\left(1+\frac{\delta_i}{1+\sqrt{1-\delta_i}}\right)(A\indices{^i_i})^2+\\
\nonumber&+\sum_{i,j\not=i}\left(1+\frac{(\delta_i-\delta_j)^{-1}}{1+\sqrt{1-\delta_i}}2\delta_i\delta_j\right)\left((A\indices{^i_j})^2+(B\indices{^i_j})^2\right)+\\
&+\sum_{i,j\not=i}\frac{\sqrt{\delta_i\delta_j}}{1+\sqrt{1-\delta_i}}\left(1+\frac{\delta_i+\delta_j}{\delta_i-\delta_j}\right)\left((A\indices{^i_j})^2-(B\indices{^i_j})^2\right).
\end{align}
The sum~\eqref{eq:tr_totale} can be reordered as
\begin{align}\label{eq:sum_AB}
\nonumber&\sum_{i,j\not=i}\left(1+\frac{2\delta_i\sqrt{\delta_j}}{1+\sqrt{1-\delta_i}}\frac{\sqrt{\delta_j}+\sqrt{\delta_i}}{\delta_i-\delta_j}\right)(A\indices{^i_j})^2+\\
&+\sum_{i,j\not=i}\left(1+\frac{2\delta_i\sqrt{\delta_j}}{1+\sqrt{1-\delta_i}}\frac{\sqrt{\delta_j}-\sqrt{\delta_i}}{\delta_i-\delta_j}\right)(B\indices{^i_j})^2.
\end{align}
Consider first the~$A$-terms: since~$A$ is symmetric, the coefficient of~$(A\indices{^i_j})^2$ in~\eqref{eq:sum_AB} is
\begin{align*}
&\left(1+\frac{2\delta_i\sqrt{\delta_j}}{1+\sqrt{1-\delta_i}}\frac{\sqrt{\delta_j}+\sqrt{\delta_i}}{\delta_i-\delta_j}\right)+\left(1-\frac{2\delta_j\sqrt{\delta_i}}{1+\sqrt{1-\delta_j}}\frac{\sqrt{\delta_i}+\sqrt{\delta_j}}{\delta_i-\delta_j}\right)=\\
&=2+\frac{2}{\sqrt{\delta_i}-\sqrt{\delta_j}}\left(\sqrt{\delta_j}-\sqrt{\delta_j}\sqrt{1-\delta_i}-\sqrt{\delta_i}+\sqrt{\delta_i}\sqrt{1-\delta_j}\right)=\\
&=2\frac{\sqrt{\delta_i}\sqrt{1-\delta_j}-\sqrt{\delta_j}\sqrt{1-\delta_i}}{\sqrt{\delta_i}-\sqrt{\delta_j}}
\end{align*}
and it is easy to check that this is always non-negative: we can assume that~$\delta_i\geq\delta_j$, so~$\sqrt{1-\delta_j}\geq\sqrt{1-\delta_i}$ and we find
\begin{equation*}
\frac{\sqrt{\delta_i}\sqrt{1-\delta_j}-\sqrt{\delta_j}\sqrt{1-\delta_i}}{\sqrt{\delta_i}-\sqrt{\delta_j}}\geq\sqrt{1-\delta_i}\geq 0.
\end{equation*}
Consider instead the~$B$-terms in~\eqref{eq:sum_AB}: since~$B$ is skew-symmetric, the coefficient of~$(B\indices{^i_j})^2$ is
\begin{align*}
&\left(1-\frac{2\delta_i\sqrt{\delta_j}}{1+\sqrt{1-\delta_i}}\frac{\sqrt{\delta_i}-\sqrt{\delta_j}}{\delta_i-\delta_j}\right)+\left(1-\frac{2\delta_j\sqrt{\delta_i}}{1+\sqrt{1-\delta_j}}\frac{\sqrt{\delta_j}-\sqrt{\delta_i}}{\delta_j-\delta_i}\right)=\\
&=2-\frac{2}{\sqrt{\delta_i}+\sqrt{\delta_j}}\left(\sqrt{\delta_j}\left(1-\sqrt{1-\delta_i}\right)+\sqrt{\delta_i}\left(1-\sqrt{1-\delta_j}\right)\right)=\\
&=\frac{2}{\sqrt{\delta_i}+\sqrt{\delta_j}}\left(\sqrt{\delta_j}\sqrt{1-\delta_i}+\sqrt{\delta_i}\sqrt{1-\delta_j}\right)
\end{align*}
and this is strictly positive. This concludes the proof that~\eqref{eq:sum_AB}, and so also~\eqref{eq:tr_totale} is positive, unless~$R=A+\I B$ vanishes. From the definition of~$R$ we know that this happens only if~$\dot{G}$ is itself identically~$0$.

This shows that, even if~$D\rho\cdot\diff^2_t\delta(\xi\,G_t\,\bar{\xi}\,G_t)$ in~\eqref{eq:diff_seconda_autoval} might be negative, its sum with~$\mrm{Tr}\left(G^{-1}\dot{G}G^{-1}\dot{G}\right)$ is always positive (unless~$\dot{G}=0$). As a consequence, the second variation of~$\widehat{\m{F}}$ along~$u_t$, as written in~\eqref{eq:HK_secondvar}, is positive.

\section{The toric \textit{HK}-energy}\label{sec:toric_HKenergy}

In this Section we prove our variational characterisation and convexity result in the case of toric manifolds, Theorem~\ref{thm:toricHKenergy}, as well as our~$K$-stability result, Theorem~\ref{thm:Kstab}. The key new ingredient, with respect to the periodic case, is an integration by parts formula, Lemma~\ref{lemma:intbyparts_realmm}.

Let~$P$ be a moment polytope, described by the equations in~\eqref{eq:polytope_eq}. Fix a symplectic potential~$u$, let~$G=D^2u$ and assume that~$G\,\xi\,G$ is a smooth matrix-valued function on~$P$, as in~\eqref{eq:def_cs_sympcoord}. To prove our integration by parts formula it will be notationally convenient to let~$\mrm{d}\sigma$ be the measure on~$\diff P$ that on each side~$S_r$ is given by the Lebesgue measure multiplied by~$\abs{\nabla\ell^r}^{-2}$. This is the same measure~$\mrm{d}\sigma$ considered by Donaldson in~\cite{Donaldson_stability_toric}.
\begin{lemma}\label{lemma:intbyparts_realmm}
For a Delzant polytope~$P\subset\bb{R}^n$, fix a symplectic potential~$u$ satisfying Guillemin's boundary conditions and a Higgs term~$\xi$ satisfying the conditions in Proposition~\ref{proposition:toric_boundarycond}. For any function~$f\in\m{C}^0(P)\cap\m{C}^\infty(P^\circ)$ that is either convex or smooth on~$P$ we have
\begin{align*}
&\int_P\mrm{Tr}\left(\left(\mathbbm{1}-\xi\,D^2u\,\bar{\xi}\,D^2u\right)^{\frac{1}{2}}D^2u^{-1}\,D^2f\right)\mrm{d}\mu=\\&=\int_Pf\left(\left(\mathbbm{1}-\xi\,D^2u\,\bar{\xi}\,D^2u\right)^{\frac{1}{2}}D^2u^{-1}\right)^{ab}_{,ab}\mrm{d}\mu+\int_{\diff P}\!f\,\mrm{d}\sigma.
\end{align*}
\end{lemma}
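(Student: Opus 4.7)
The plan is to perform a double integration by parts on a compact exhaustion of $P$ and analyse the limit of the boundary contributions, exploiting the specific boundary behaviours of $u$ and $\xi$ to identify the limit with $\int_{\diff P} f\,\mrm{d}\sigma$.

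First assume $f$ is smooth on $P$. Set $G:=D^2u$, $N:=(\mathbbm{1}-\xi G\bar{\xi}G)^{1/2}$, $M^{ab}:=(NG^{-1})^{ab}$, and $P_\varepsilon := \set*{y\in P\tc\ell^r(y)\ge\varepsilon,\ \forall r}$. Since $u$ and $\xi$ are smooth on $P^\circ$, the divergence theorem applied twice on $P_\varepsilon$ gives
\begin{equation*}
\int_{P_\varepsilon}\!\! M^{ab} f_{,ab}\,\mrm{d}\mu - \int_{P_\varepsilon}\!\! M^{ab}_{,ab}\,f\,\mrm{d}\mu = \sum_r\int_{\{\ell^r=\varepsilon\}}\!\bigl(M^{ab}\hat{n}^r_a f_{,b} - M^{ab}_{,a}\hat{n}^r_b\,f\bigr)\,\mrm{d}\mrm{Leb}_{n-1},
\end{equation*}
where $\hat{n}^r := -\nabla\ell^r/|\nabla\ell^r|$ is the outward Euclidean unit normal to $\{\ell^r=\varepsilon\}$. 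It is then enough to show that the right-hand side converges to $\int_{\diff P} f\,\mrm{d}\sigma$ as $\varepsilon\to 0$.

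Fix a face $S_r$ and pick affine coordinates $(y^1,y')$ adapted to $S_r$ so that $\ell^r=y^1$ and $|\nabla\ell^r|=1$ on $S_r$. Guillemin's boundary conditions (Theorem~\ref{thm:complex_str}) give $u_{11}\sim 1/y^1$ while the remaining entries of $G$ are bounded; a block-matrix computation then shows that the first row and first column of $G^{-1}$ are of order $y^1$, whereas the rest of $G^{-1}$ extends smoothly to $S_r$. By Proposition~\ref{proposition:toric_boundarycond}, $Y:=G\xi G$ extends smoothly to $P$, hence $\xi = G^{-1}YG^{-1}$; multiplying out, the leftmost factor of $G^{-1}$ forces the first row of $\xi G$ to be of order $y^1$, and consequently the same holds for the first row of $\xi G\bar{\xi}G$. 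In particular the first row of $\mathbbm{1} - \xi G\bar{\xi}G$ equals $(1,0,\dots,0)$ on $S_r$, and a standard block computation shows that the same structure is preserved by the matrix square root $N$, giving $N^1_b|_{S_r} = \delta^1_b$ for all $b$. Substituting into $M=NG^{-1}$ then yields, on $S_r$,
\begin{equation*}
M^{1b}|_{S_r} = 0,\qquad M^{a1}_{,a}|_{S_r} = \diff_{y^1}(y^1 N^a_1)\big|_{y^1=0} = N^1_1|_{S_r} = 1.
\end{equation*}
Hence the integrand on $\{y^1=\varepsilon\}$ tends uniformly to $f$ on $S_r$ (the first term vanishes, the second produces $f$), so the face contribution converges to $\int_{S_r}f\,\mrm{d}\mrm{Leb}_{n-1}$. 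For a general face $S_r$ the normalisation factor $|\nabla\ell^r|^{-2}$ appearing in $\mrm{d}\sigma$ arises naturally from using the un-normalised affine function $\ell^r$ in place of the adapted coordinate $y^1$, in exactly the same way as in~\cite[Lemma~3.3.5]{Donaldson_stability_toric}.

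The case of $f \in \m{C}^0(P)\cap\m{C}^\infty(P^\circ)$ convex is reached by a standard approximation: take smooth convex $f_k\to f$ uniformly on $P$, and pass to the limit on each side using continuity of $f$ on $\diff P$ for the boundary term and dominated convergence for the interior integrals (relying on the boundedness of $M^{ab}$ up to $\diff P$ and the integrability of $M^{ab}_{,ab}$ against $\mrm{d}\mu$, both of which follow from the regularity of $u$ and $\xi$). The main technical obstacle is the analysis of the square root $N$ near $\diff P$: the crucial input is that, thanks to Proposition~\ref{proposition:toric_boundarycond}, $\xi$ carries two factors of $G^{-1}$, which cancel the singular behaviour of $G$ in the first row of $\xi G\bar{\xi}G$, producing the identity row in $N$ needed to match Donaldson's classical boundary contribution.
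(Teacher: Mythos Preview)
Your treatment of the smooth case follows the same integration-by-parts-on-an-exhaustion strategy as the paper, and your boundary analysis of $M=NG^{-1}$ near a face is essentially equivalent to the paper's expansion of $\left(\mathbbm{1}-G^{-1}\Phi G^{-1}\bar\Phi\right)^{1/2}G^{-1}\nabla\ell$; both produce the divergence~$1+O(\varepsilon)$ and the vanishing of the other boundary term. That part is fine.

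The convex case, however, contains a genuine gap. You propose to approximate $f$ uniformly by smooth convex $f_k$ and pass to the limit by dominated convergence. The boundary integral and the term $\int_P M^{ab}_{,ab}\,f_k\,\mrm{d}\mu$ are unproblematic, but the left-hand side $\int_P M^{ab}(f_k)_{,ab}\,\mrm{d}\mu$ is not: uniform convergence of $f_k$ gives you no control whatsoever on $D^2f_k$ near $\diff P$, and since $D^2f$ may blow up there, there is no integrable majorant available. ``Boundedness of $M^{ab}$ up to $\diff P$'' does not help, because it is the Hessian factor, not $M$, that is unbounded. So dominated convergence does not apply, and the argument as written does not establish $\int_P M^{ab}(f_k)_{,ab}\,\mrm{d}\mu\to\int_P M^{ab}f_{,ab}\,\mrm{d}\mu$.

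The paper avoids this difficulty entirely: it never approximates $f$. Instead it works directly with the given convex $f$ on $P_\delta$ and analyses the dangerous boundary term $\int_{\diff P_\delta}\nabla_V f\,\mrm{d}\sigma$, where $V$ is the real part of $NG^{-1}\nabla\ell$. The point is that although $\nabla f$ may blow up, the vector $V$ vanishes to order~$\delta$, and convexity lets one bound $|\nabla_V f(p)|$ by $c\cdot\max\{f(q)-f(p),\,f(q')-f(p)\}$ for suitable points $q,q'$ with $|p-q|\sim\delta$; uniform continuity of $f$ on $P$ then forces the whole boundary integral to zero. This is exactly Donaldson's argument from~\cite[Lemma~3.3.5]{Donaldson_stability_toric}, and it is where convexity is genuinely used. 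Your proposal bypasses this step and, as it stands, does not close.
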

We follow the proof of~\cite[Lemma~$3.3.5$]{Donaldson_stability_toric}.
\begin{proof}
The matrix~$G^{-1}$ is a continuous and bounded function on the whole polytope; by our assumptions it is invertible in~$P^\circ$, and as~$y$ goes to a face~$S_r$,~$G^{-1}(y)$ acquires a kernel containing the vector~$\nabla\ell^r$. In particular,~$G^{-1}$ vanishes at the vertices of~$P$. For~$\delta>0$, let~$P_\delta$ be a polytope contained in~$P$, with faces parallel to those of~$P$ separated by a distance~$\delta$. Inside~$P_\delta$ both~$u$ and~$f$ are smooth, so we can integrate by parts and obtain  
\begin{align}\label{eq:intbyparts_realmm}
\nonumber&\int_{P_\delta}\!\!\!\mrm{Tr}\left(\!\left(\mathbbm{1}-\xi\,G\,\bar{\xi}\,G\right)^{\frac{1}{2}}G^{-1}D^2f\right)\mrm{d}\mu=\int_{P_\delta}\!\!\!f\left(\!\left(\mathbbm{1}-\xi\,G\,\bar{\xi}\,G\right)^{\frac{1}{2}}G^{-1}\right)^{ab}_{,ab}\mrm{d}\mu+\\
&+\int_{\diff P_\delta}\!\!\!\!\!f\,\mrm{div}\left(\!\left(\mathbbm{1}-\xi\,G\,\bar{\xi}\,G\right)^{\frac{1}{2}}G^{-1}\nabla\ell\right)\mrm{d}\sigma-\int_{\diff P_\delta}\!\!\!\left(\!\left(\mathbbm{1}-\xi\,G\,\bar{\xi}\,G\right)^{\frac{1}{2}}G^{-1}\nabla\ell\right)^a\!\diff_af\mrm{d}\sigma
\end{align}
since the outer normal to the boundary is given, at the face~$S_r$, by~$-\frac{\nabla\ell^r}{\abs{\nabla\ell^r}^2}$.

We will take the limit for~$\delta\to 0$ of the two boundary terms on the right-hand side. Fix a face~$S_{\delta,r}$ of~$P_\delta$, parallel to the face~$S_r$ of~$P$; we consider a new system of coordinates, centred at a vertex~$p_0$ of~$S_r$, generated by the vectors~$\set*{\nabla\ell^j\tc j\in I}$, where~$\set*{S_j\tc j\in I}$ is the set of faces that meet at~$p_0$. Up to reordering the faces of~$P$, we can assume that~$r=1$ and that the faces meeting at~$p_0$ are~$S_1,\dots,S_n$. As the distance between~$S_{\delta,r}$ and~$S_{r}$ is~$\delta$, in this new system of coordinates the points of~$S_{\delta,r}$ are described as~$(\delta,y^2,\dots,y^n)$.

In this system of coordinates, the matrix~$G^{-1}$ takes the form
\begin{equation*}
G^{-1}=\begin{pmatrix} y^1+O((y^1)^2) & O(y^1)\\ O(y^1) & \tilde{G}\end{pmatrix}
\end{equation*}
where~$\tilde{G}$ is a~$(n-1)\times(n-1)$ matrix that vanishes at~$0$. The boundary conditions for~$\xi$ imply that there is a smooth matrix-valued function~$\Phi$ such that~$\xi=G^{-1}\,\Phi\,G^{-1}$. Writing the matrix~$\Phi$ as~$\begin{pmatrix}\Phi_{11} & \dots \\\vdots & \tilde{\Phi}\end{pmatrix}$, we can expand~$\left(\mathbbm{1}-\xi\,G\,\bar{\xi}\,G\right)^{\frac{1}{2}}G^{-1}\nabla\ell(p)$ in~$y^1$ as
\begin{align*}
&\left(\mathbbm{1}-G^{-1}\,\Phi\,G^{-1}\,\bar{\Phi}\right)^{\frac{1}{2}}G^{-1}\nabla\ell=\\
&=\left(\begin{pmatrix}\mathbbm{1}& 0\\\vdots & \mathbbm{1}-\tilde{G}\tilde{\Phi}\tilde{G}\bar{\tilde{\Phi}}\end{pmatrix}^{\frac{1}{2}}\!\!\!+O(y^1)\right)\begin{pmatrix}y^1+O((y^1)^2)\\O(y^1)\end{pmatrix}=\begin{pmatrix}y^1+O((y^1)^2)\\O(y^1)\end{pmatrix}
\end{align*}
and the divergence of this vector at a point~$p\in S_{r,\delta}$ is~$1+O(\delta)$. Hence the first boundary term in~\eqref{eq:intbyparts_realmm} goes to~$\int_{\diff P}f\mrm{d}\sigma$ as~$\delta$ goes to~$0$.

As for the second boundary term, it certainly vanishes if~$f$ is smooth up to the boundary of~$P$, as~$G^{-1}\nabla\ell\in O(\delta)$. It is more complicated to obtain the same result for a generic convex function~$f\in\m{C}^0(P)\cap\m{C}^\infty(P^\circ)$, since the gradient of~$f$ might blow up as we go to the boundary of~$P$. However the convexity of~$f$ is sufficient to guarantee that the vanishing of~$G^{-1}\nabla\ell$ will balance out the growth of~$\nabla f$, as shown in~\cite{Donaldson_stability_toric}.

Let~$V$ be the real part of~$\left(\mathbbm{1}-G^{-1}\Phi G^{-1}\bar{\Phi}\right)^{\frac{1}{2}}G^{-1}\nabla\ell$. Since all the other terms in~\eqref{eq:intbyparts_realmm} are real, at least in the limit for~$\delta\to0$, we just have to show that
\begin{equation*}
\int_{\diff P_\delta}\nabla_Vf\mrm{d}\mu\to 0\mbox{ as }\delta\to0.
\end{equation*}
For~$p\in S_{\delta,r}$, let~$q=q(p)$ be the closest point to~$p$ at the intersection between~$\diff P$ and the ray~$p-tV$. Notice that, as~$\nabla\ell^\transpose\cdot V\geq 0$ (see the proof of Theorem~\ref{thm:Kstab} below), the vector~$V$ points inward from~$p$ to~$P_\delta$. We can choose~$\delta_0$ such that, for~$\delta<\delta_0$ and every~$p\in S_{\delta,r}$,~$q(p)$ belongs to the face~$S_r$ of~$\diff P$, as the slopes of~$V$, namely~$V^2/V^1,\dots,V^n/V^1$ are uniformly bounded on~$S_{\delta,r}$.

Let now~$q'=q'(p)=p-(q-p)$, and consider the norm of~$V$ and the distance between~$p$ and~$q$,~$q'$. As~$q$ lies on~$S_r$, we have
\begin{equation*}
\abs{p-q}=\abs{p-q'}=\abs{t V}=(1+O(\delta))\abs{V}
\end{equation*}
and so there is some positive constant~$c$ such that, for~$\delta<\delta_0$
\begin{equation*}
\abs{V}\leq c\abs{p-q}.
\end{equation*}
Using the convexity of~$f$, this implies that at~$p\in S_{\delta,r}$
\begin{equation*}
\begin{split}
\abs*{\nabla_Vf(p)}\leq&\frac{\abs{V}}{\abs{q-p}}\mrm{max}\set*{f(q)-f(p),f(q')-f(p)}\leq\\
\leq&c\,\mrm{max}\set*{f(q)-f(p),f(q')-f(p)}
\end{split}
\end{equation*}
so we can estimate the second boundary term in~\eqref{eq:intbyparts_realmm} as
\begin{align*}
&\abs*{\int_{\diff P_\delta}\nabla_Vf\mrm{d}\mu}\leq\int_{\diff P_\delta}\abs*{\nabla_Vf}\mrm{d}\mu\leq\\ 
&\leq c\int_{p\in\diff P_\delta}\mrm{max}\set*{f(q(p))-f(p),f(q'(p))-f(p)}\mrm{d}\mu\leq\\
&\leq c\,\mrm{Vol}(\diff P_\delta)\max_{p\in\diff P_\delta}\set*{f(q)-f(p),f(q')-f(p)}.
\end{align*}
As~$f$ is uniformly continuous on~$P$ and~$\abs{p-q}\in O(\delta)$, this inequality shows that, as~$\delta$ goes to~$0$,
\begin{equation*}
\int_{\diff P_\delta}\nabla_Vf\mrm{d}\mu\to 0.{}
\end{equation*}\qedhere
\end{proof}
Now the variational characterisation in Theorem~\ref{thm:toricHKenergy} follows easily from Lemma~\ref{lemma:intbyparts_realmm}. Indeed, the Euler-Lagrange equation of the toric \textit{HK}-energy can be readily computed by our integration by parts formula in Lemma~\ref{lemma:intbyparts_realmm}: just notice that the formula is linear in~$f$, so it holds also when~$f$ is a difference of convex functions.  The convexity statement in Theorem~\ref{thm:toricHKenergy} instead follows from the computations in Section~\ref{ConvexitySec} for the periodic \textit{HK}-energy, since the boundary terms in~\eqref{eq:toricHKenergy} are linear in the symplectic potential.

We now proceed to prove Theorem \ref{thm:Kstab}. First, let us recall the definition of uniform~$K$-stability from \cite{Li_uniformKstab}.

\begin{definition}\label{def:Kstab}
Let~$P$ be the moment polytope of a polarized toric manifold~$M$, let~$C=\mrm{d}\sigma(\diff P)/\mrm{d}\mu(P)$ and consider the functional
\begin{equation*}
\m{L}_C(f)=\int_{\diff P}f\,\mrm{d}\sigma-\int_P C\,f\,\mrm{d}\mu.
\end{equation*}
We say that~$M$ is~$K$-stable if~$\m{L}_C(f)\geq 0$ for any convex function~$f\in\m{C}(P)\cap\m{C}^\infty(P^\circ)$, with equality if and only if~$f$ is affine linear.

For a chosen point~$p_0\in P^\circ$, let~$\m{C}_\infty$ be the set of convex functions~$f\in\m{C}(P)\cap\m{C}^\infty(P^\circ)$ such that~$f(p_0)=\mrm{d}f(p_0)=0$. The manifold is \emph{uniformly}~$K$-stable if $\m{L}_c$ vanishes on affine-linear functions and there exists a positive constant~$\lambda$ such that for all~$f\in\m{C}_\infty$
\begin{equation*}
\m{L}_C(f)\geq\lambda\int_{\diff P}f\mrm{d}\sigma.
\end{equation*}
\end{definition}
The following proof is adapted from the analogous results in \cite{Donaldson_stability_toric} and \cite{Li_uniformKstab}, where it is shown that uniform~$K$-stability is a necessary condition for the existence of a solution to Abreu's equation.

\begin{proof}[Proof of Theorem \ref{thm:Kstab}]
Assume that~$(\xi,u)$ is a solution of the toric HcscK system~\eqref{eq:symplectic_HcscK_toric}. From Lemma~\ref{lemma:intbyparts_realmm} we see that, for every convex function~$f\in\m{C}(P)\cap\m{C}^\infty(P^\circ)$
\begin{equation}\label{cor:toric_FutakiEqu}
\int_P\mrm{Tr}\left(\left(\mathbbm{1}-\xi\,D^2u\,\bar{\xi}\,D^2u\right)^{\frac{1}{2}}D^2u^{-1}\,D^2f\right)\mrm{d}\mu=\m{L}_C(f).
\end{equation}
$K$-stability of the toric manifold follows from equation \eqref{cor:toric_FutakiEqu} and the following
\begin{claim}
The matrix~$\left(\mathbbm{1}-\xi\,D^2u\,\bar{\xi}\,D^2u\right)^{\frac{1}{2}}D^2u^{-1}$ is Hermitian and positive definite.
\end{claim}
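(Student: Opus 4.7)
\medskip

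The plan is to reduce the claim to a clean linear algebra statement by conjugating with $G^{1/2}$, where $G = D^2 u$. Set
\[
\Xi := G^{1/2}\,\xi\,G^{1/2}.
\]
Since $\xi$ is complex symmetric and $G$ is real symmetric positive definite, $\Xi$ is again complex symmetric. A direct computation shows
\[
G^{1/2}\left(\xi\,G\,\bar\xi\,G\right)G^{-1/2} = \left(G^{1/2}\xi G^{1/2}\right)\left(G^{1/2}\bar\xi G^{1/2}\right) = \Xi\,\bar\Xi,
\]
so $\xi G \bar\xi G$ is similar, via $G^{1/2}$, to $\Xi\bar\Xi$. By the functional calculus this similarity intertwines the square roots (once they are defined), giving
\[
\left(\mathbbm{1}-\xi G \bar\xi G\right)^{1/2}G^{-1} = G^{-1/2}\left(\mathbbm{1}-\Xi\bar\Xi\right)^{1/2}G^{-1/2}.
\]
Since conjugation by the real symmetric matrix $G^{-1/2}$ preserves both the Hermitian property and positive definiteness, it is enough to show that $(\mathbbm{1}-\Xi\bar\Xi)^{1/2}$ is Hermitian and positive definite.

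For this I would invoke the Takagi--Autonne factorization that has already been used in the proof of Theorem~\ref{thm:real_mm_expl}: write $\Xi = V D V^{\transpose}$ with $V$ unitary and $D$ a real non-negative diagonal matrix. Using the unitarity identity $V^{\transpose}\bar V = \mathbbm{1}$ one computes
\[
\Xi\,\bar\Xi = V D V^{\transpose}\,\bar V D \bar V^{\transpose} = V D^2 V^*,
\]
which is manifestly Hermitian positive semidefinite with eigenvalues equal to the diagonal entries of $D^2$. In particular the eigenvalues of $\xi G \bar\xi G$ (equal to those of $\Xi\bar\Xi$) are real and non-negative, and the standing hypothesis $\SpecRad(\xi G \bar\xi G)<1$ places them in $[0,1)$. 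Therefore
\[
\mathbbm{1}-\Xi\bar\Xi = V(\mathbbm{1}-D^2)V^*
\]
is Hermitian positive definite, so its unique positive square root $(\mathbbm{1}-\Xi\bar\Xi)^{1/2} = V(\mathbbm{1}-D^2)^{1/2}V^*$ is too. Conjugating by $G^{-1/2}$ proves the claim.

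There is no real obstacle here; the only mild subtlety is justifying that the similarity $G^{1/2}(\cdot)G^{-1/2}$ commutes with the square-root operation, which follows from the fact that the square root is obtained by Dunford calculus on a spectrum contained in $[0,1]$, and similarities respect holomorphic functional calculus. Once the claim is established, the proof of Theorem~\ref{thm:Kstab} concludes as in \cite{Donaldson_stability_toric,Li_uniformKstab}: positivity of $\mathrm{Tr}\bigl((\mathbbm{1}-\xi G \bar\xi G)^{1/2}G^{-1}\,D^2 f\bigr)$ for convex $f$ follows because the trace of a product of a Hermitian positive definite matrix with a positive semidefinite matrix is non-negative, with vanishing forcing $D^2 f\equiv 0$ on $P^\circ$.
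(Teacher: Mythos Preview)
Your proof is correct and follows essentially the same route as the paper. The paper invokes the diagonalisation $Q=S^\transpose\Lambda^{-1/2}U$ from Section~\ref{ConvexitySec} (with $G=S^\transpose\Lambda S$ and $U$ coming from Takagi--Autonne) to write $\left(\mathbbm{1}-\xi G\bar\xi G\right)^{1/2}G^{-1}=Q(\mathbbm{1}-\Delta)^{1/2}\bar Q^\transpose=Q(\mathbbm{1}-\Delta)^{1/2}Q^*$; since $Q=G^{-1/2}(S^\transpose U)$ and your $V=S^\transpose U$, $D=\Delta^{1/2}$, this is exactly your expression $G^{-1/2}V(\mathbbm{1}-D^2)^{1/2}V^*G^{-1/2}$. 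Your packaging, separating the congruence by $G^{1/2}$ from the Takagi--Autonne step on $\Xi$, is arguably cleaner.
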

To prove this claim, it is enough to diagonalise~$D^2u$ and~$\xi\,D^2u\,\bar{\xi}\,D^2u$ as in the proof of Theorem~\ref{thm:convexity}. Then, for the matrix square root we get
\begin{equation*}
\left(\mathbbm{1}-\xi\,D^2u\,\bar{\xi}\,D^2u\right)^{\frac{1}{2}}D^2u^{-1}=Q\left( \mathbbm{1}-\Delta\right)^{\frac{1}{2}}\bar{Q}^\transpose
\end{equation*}
and since the eigenvalues of~$\Delta$ are smaller than~$1$,~$\left(\mathbbm{1}-\Delta\right)^{\frac{1}{2}}$ is positive definite.

To prove that~$M$ is \emph{uniformly}~$K$-stable, we can repeat the argument of \cite[\S$5$]{Li_uniformKstab} almost verbatim. The results in \cite[\S$5$]{Li_uniformKstab} (see in particular the proofs of Lemma $5.1$ and Theorem $4.4$) can be rewritten as follows:
\begin{proposition}[\cite{Li_uniformKstab}]\label{prop:unif_lemma}
Assume that there is a matrix-valued function~$V\in\m{C}^0(P)\cap\m{C}^\infty(P^\circ)$, strictly positive definite in $P^\circ$, such that
\begin{equation}\label{eq:unif_lemma}
\forall f\in\m{C}_\infty,\quad\m{L}_C(f)=\int_PV^{ij}f_{,ij}\mrm{d}\mu.
\end{equation}
Then $P$ is uniformly $K$-stable.
\end{proposition}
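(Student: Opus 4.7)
The plan is to adapt the Donaldson--Li strategy used for the existence problem of cscK metrics on toric manifolds. First I would observe that the two easy parts of uniform $K$-stability fall out of the hypothesis directly: vanishing of $\mathcal{L}_C$ on affine-linear functions is immediate from \eqref{eq:unif_lemma} since their Hessian vanishes, and the non-uniform inequality $\mathcal{L}_C(f) \geq 0$ is pointwise, because the trace pairing of two positive-semidefinite symmetric matrices (the continuous strictly positive $V$ on $P^\circ$ and the Hessian of a convex $f$) is non-negative.

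For the quantitative bound $\mathcal{L}_C(f) \geq \lambda \int_{\partial P} f \, d\sigma$ I would argue by contradiction. Suppose no such $\lambda > 0$ exists; then there is a sequence $f_k \in \mathcal{C}_\infty$ with $\int_{\partial P} f_k \, d\sigma = 1$ and $\int_P V^{ij} (f_k)_{,ij} \, d\mu \to 0$. Using the normalizations $f_k(p_0) = 0$, $\nabla f_k(p_0) = 0$ together with Donaldson's inequalities from \cite{Donaldson_stability_toric} bounding $\|f\|_{L^1(P,d\mu)}$ on $\mathcal{C}_\infty$ by $\int_{\partial P} f \, d\sigma$, the sequence is uniformly bounded in $L^1(P,d\mu)$ and locally uniformly bounded on $P^\circ$. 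Standard compactness for bounded sequences of convex functions then produces a subsequence converging locally uniformly on $P^\circ$ to a convex limit $f_\infty$ with $f_\infty(p_0) = \nabla f_\infty(p_0) = 0$.

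The central difficulty will be to transfer the vanishing $\int_P V^{ij}(f_k)_{,ij}\, d\mu \to 0$ and the normalization $\int_{\partial P} f_k \, d\sigma = 1$ to the limit: the Hessians converge only as matrix-valued measures, and boundary integrals of convex functions are merely lower semicontinuous. To circumvent this I would integrate by parts in \eqref{eq:unif_lemma} once, pushing one derivative onto $V$ (which is continuous up to $\partial P$) and collecting a boundary term. This recasts $\mathcal{L}_C(f_k)$ as a combination of integrals that are continuous under the $L^1$ convergence of $f_k$, plus boundary data whose asymptotics near $\partial P$ can be controlled analogously to the proof of Lemma \ref{lemma:intbyparts_realmm}. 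A Donaldson--style barrier argument, using piecewise linear test functions to probe individual faces, then rules out the possibility that the boundary mass of $f_k$ concentrates on lower-dimensional strata and disappears in the limit, yielding both $\mathcal{L}_C(f_\infty) = 0$ and $\int_{\partial P} f_\infty \, d\sigma > 0$.

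To conclude, I would exploit that $\mathcal{L}_C(f_\infty) = 0$ combined with pointwise non-negativity of $V^{ij}(f_\infty)_{,ij}$ and the strict positive definiteness of $V$ on $P^\circ$ forces $D^2 f_\infty \equiv 0$ on $P^\circ$. Hence $f_\infty$ is affine-linear, and the normalizations $f_\infty(p_0) = 0$, $\nabla f_\infty(p_0) = 0$ give $f_\infty \equiv 0$, contradicting $\int_{\partial P} f_\infty \, d\sigma > 0$. The hard part, as indicated, is the compactness/limit step and the careful handling of the boundary behaviour; the algebraic endpoint is then a direct consequence of positive definiteness.
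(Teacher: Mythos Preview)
The paper does not actually prove this proposition: it is stated with attribution to \cite{Li_uniformKstab} (specifically, as a repackaging of the proofs of Lemma~5.1 and Theorem~4.4 there), and is then applied as a black box. So there is no in-paper argument to compare against; what you have written is a sketch of the Donaldson--Li compactness argument that underlies the cited reference, and the overall architecture (contradiction via a normalised minimising sequence in~$\m{C}_\infty$, compactness of convex functions, strict positivity of~$V$ forcing the limit to be affine) is correct.

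There is, however, a genuine weak point in your sketch. You propose to ``integrate by parts in~\eqref{eq:unif_lemma} once, pushing one derivative onto~$V$'' in order to make the functional continuous under~$L^1$ convergence. But the hypothesis only gives~$V\in\m{C}^0(P)\cap\m{C}^\infty(P^\circ)$: the first derivatives of~$V$ are not assumed to extend continuously to~$\diff P$, and in the application~$V=\left(\mathbbm{1}-\xi\,D^2u\,\bar{\xi}\,D^2u\right)^{1/2}(D^2u)^{-1}$ they typically do not. Moreover, a single integration by parts still leaves~$\nabla f_k$ in the integrand, which for convex~$f_k$ can blow up at~$\diff P$; so this manoeuvre does not buy the continuity you claim. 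The argument in~\cite{Li_uniformKstab} avoids this: one works directly with~$\int_P V^{ij}f_{,ij}\,\mrm{d}\mu$, uses weak convergence of the Hessian measures on compact subsets of~$P^\circ$ together with Fatou and the pointwise non-negativity of~$V^{ij}f_{,ij}$ to get~$D^2f_\infty=0$, and then derives the contradiction from a separate local estimate near~$\diff P$ (this is where the ``$P$-metric'' and the comparison with simple piecewise-linear functions enter). Your final paragraph is fine; it is the middle step that needs to be rerouted.
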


Donaldson's integration by parts formula (see \cite[Lemma $3.3.5$]{Donaldson_stability_toric}\footnote{Notice that there is a sign misprint in the statement; the correct sign can be found in \cite[equation $(3.3.6)$]{Donaldson_stability_toric}.}) readily implies that, if there is a symplectic potential $v$ solving Abreu's equation, equality \eqref{eq:unif_lemma} holds for $V=D^2v^{-1}$.

In the present case, we can instead establish \eqref{eq:unif_lemma} using the integration by parts formula of Lemma \ref{lemma:intbyparts_realmm}: equation \eqref{cor:toric_FutakiEqu} implies that $V=\left(\mathbbm{1}-\xi\,D^2u\,\bar{\xi}\,D^2u\right)^{\frac{1}{2}}D^2u^{-1}$ satisfies \eqref{eq:unif_lemma}. As we know that this matrix is positive definite, Proposition \ref{prop:unif_lemma} allows us to conclude.\qedhere
\end{proof}

\section{Solutions of the periodic HcscK system}\label{sec:periodic_sol}
\subsection{Quantitative perturbation}\label{QuantPertSec}

In this Section we will prove our existence result on complex tori, Theorem~\ref{QuantPertThm}. The proof is based on regarding the real moment map equation in~\eqref{thm:HcscKSymplCoordThm} as a perturbation of Abreu's equation~$u^{ab}_{,ab} = A$, by considering the continuity path 
\begin{equation}\label{AbreuEq}
u^{ab}_{,ab} = A_t   
\end{equation}
given for a fixed Higgs tensor~$\xi$ by
\begin{equation}\label{nonlinearLHS}
A_t =t\left(\left(\mathbbm{1}+\left(\mathbbm{1}-\xi\,D^2u\,\bar{\xi}\,D^2u\right)^{\frac{1}{2}}\right)^{-1}\!\!\!\xi\,D^2u\,\bar{\xi}\right)^{ab}_{,ab}\ 
\mbox{for }t\in[0,1]. 
\end{equation}
For~$t=0$,~\eqref{AbreuEq} is just Abreu's equation on the abelian variety, solved by the flat metric. Using~\eqref{eq:HcscKSymplRealEq} instead one can see that~\eqref{AbreuEq} for~$t=1$ becomes the real moment map equation of the Hitchin-cscK system.

We recall a basic estimate on the determinant due to Feng-Sz\'ekelyhidi.
\begin{proposition}[\cite{FengSzekelyhidi_abelian} Lemma 3]\label{DetEstimate} There exist positive constants~$c_1$,~$c_2$, depending only on~$\sup\abs{A}$, such that a solution of~$(u^{ab})_{,ab} = A$ satisfies
\begin{equation*}
0 < c_1 < \det D^2 u < c_2.
\end{equation*}
\end{proposition}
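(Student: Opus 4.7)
The plan is to bound $w := \log\det D^2 u$ from above and below by constants depending only on $\sup|A|$, where $A := (u^{ab})_{,ab}$. The strategy is to derive a second-order elliptic equation satisfied by $w$ and combine the maximum principle on $\bb{T}^n$ with integration by parts against periodic test functions.

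The first step is to establish the pointwise algebraic identity
\begin{equation*}
u^{ab}\, w_{,ab} = -A + Q, \qquad Q := u^{ab}\, u^{ce}\, u^{df}\, u_{cd,a}\, u_{ef,b} \ge 0.
\end{equation*}
This follows from the standard matrix-calculus formulas $(u^{ab})_{,c} = -u^{ap}u^{bq}u_{pq,c}$ and $w_{,a} = u^{cd}u_{cd,a}$ by a direct expansion of both $(u^{ab})_{,ab}$ and $u^{ab}w_{,ab}$: the fourth-order contributions cancel exactly and the remaining third-order terms combine into $Q$. The non-negativity of $Q$, which is the squared $g_u$-norm of $D^3u$, is essentially the concavity of $\log\det$ on positive-definite matrices. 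A Cauchy--Schwarz on top of this yields the auxiliary gradient bound $|\nabla w|^2_{g_u} \le nQ$.

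Next I would apply the maximum principle on $\bb{T}^n$. Since $D^2u$, and hence $w$, is $\bb{Z}^n$-periodic, $w$ attains its maximum at some $y_\ast\in\bb{T}^n$, where $D^2w(y_\ast)\le 0$ gives $u^{ab}w_{,ab}(y_\ast) \le 0$, and hence $Q(y_\ast) \le A(y_\ast)\le \sup|A|$; symmetrically, $Q(y^\ast) \le \sup|A|$ at the minimum. For a global control, integrate the identity against $w - \bar w$ over $\bb{T}^n$ and use $\int A\,d\mu = 0$ (an immediate consequence of periodicity applied to $(u^{ab})_{,ab}=A$) to obtain
\begin{equation*}
\int_{\bb{T}^n} |\nabla w|^2_{g_u}\,d\mu \le n\int_{\bb{T}^n} Q\,d\mu = n\int_{\bb{T}^n} A\,(w-\bar w)\,d\mu \le n\sup|A|\cdot \|w-\bar w\|_{L^1(\bb{T}^n)}.
\end{equation*}

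The main obstacle is to upgrade this integral inequality into an $L^\infty$ bound on the oscillation of $w$ without having uniform ellipticity of $u^{ab}$ a priori. The resolution relies on Alexandrov--Bakelman--Pucci and Krylov--Safonov theory applied to the linear inequality $u^{ab}w_{,ab} \ge -\sup|A|$ on balls in the universal cover, with periodicity used to transfer the pointwise bounds back to $\bb{T}^n$; the elementary comparison $\det D^2u \le n^{-n}(\operatorname{Tr} D^2u)^n$ supplies the one-sided control between the $g_u$- and Euclidean norms that makes these estimates effective independently of any a priori bound. Once an oscillation bound $\operatorname{osc}\,w \le C(\sup|A|)$ is secured, the mean $\bar w$ is pinned by the normalization $\int_{\bb{T}^n}(D^2u - \bb{I})\,d\mu = 0$ (consequent on $D^2\phi$ being periodic with mean zero, WLOG) via Jensen applied to $\log\det$, producing the two-sided pointwise bound $c_1 < \det D^2 u < c_2$.
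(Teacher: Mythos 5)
The paper does not prove this statement: it is quoted verbatim from \cite{FengSzekelyhidi_abelian}, Lemma 3, whose proof runs through the divergence-free cofactor matrix, rewriting the equation as the linearised Monge--Amp\`ere equation $U^{ab}w_{,ab}=A$ for $w=(\det D^2u)^{-1}$, and then applies the maximum principle to $w$ corrected by a suitable periodic auxiliary function. Your route is different, and as written it contains both a concrete error and an essential gap. The error is in your key identity. Writing $F=\log\det D^2u$ and using that $u_{cd,a}$ and $u_{cd,ab}$ are \emph{fully} symmetric, one has $(u^{ab})_{,b}=-u^{ap}u^{bq}u_{pq,b}=-u^{ap}F_{,p}$, hence $(u^{ab})_{,ab}=u^{ab}F_{,a}F_{,b}-u^{ab}F_{,ab}$, so the correct identity is $u^{ab}F_{,ab}=-A+\abs{\nabla F}^2_g$ with $\abs{\nabla F}^2_g=u^{ab}\bigl(u^{cd}u_{cd,a}\bigr)\bigl(u^{ef}u_{ef,b}\bigr)$. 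Your $Q=u^{ab}u^{ce}u^{df}u_{cd,a}u_{ef,b}$ is the full norm $\abs{D^3u}^2_g$, which is a genuinely different (and generically larger) contraction; the two agree only up to the Cauchy--Schwarz inequality $\abs{\nabla F}^2_g\leq n\abs{D^3u}^2_g$ that you yourself invoke. This is load-bearing: at a maximum of $F$ the gradient vanishes, so with the correct remainder the maximum principle yields only $0\leq A(y_*)$, and your nontrivial conclusion $Q(y_*)\leq\sup\abs{A}$ evaporates.

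The gap is in the step you flag as ``the main obstacle''. The amended integral identity $\int_{\bb{T}^n}\abs{\nabla F}^2_g\,\mrm{d}\mu=\int_{\bb{T}^n}A(F-\bar F)\,\mrm{d}\mu$ is fine, but upgrading it to an oscillation bound is precisely where the whole difficulty of the lemma sits, and the appeal to Alexandrov--Bakelman--Pucci and Krylov--Safonov is circular. Krylov--Safonov needs uniform ellipticity of $(u^{ab})$, and ABP applied to $u^{ab}F_{,ab}\geq-\sup\abs{A}$ controls $\sup F$ by the $L^n$-norm of $\sup\abs{A}\cdot(\det(u^{ab}))^{-1/n}=\sup\abs{A}\cdot e^{F/n}$, i.e.\ by the very quantity being estimated; the inequality $\det D^2u\leq n^{-n}(\operatorname{tr}D^2u)^n$ cannot break this circle because $\operatorname{tr}D^2u$ is not a priori bounded either. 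Finally, even granting an oscillation bound, Jensen applied to the normalisation $\int_{\bb{T}^n}D^2u\,\mrm{d}\mu=\mathbbm{1}$ only gives $\int_{\bb{T}^n}F\,\mrm{d}\mu\leq0$ (concavity of $\log\det$), which pins $\det D^2u$ from \emph{above}; the lower bound $c_1<\det D^2u$ does not follow from the normalisation (e.g.\ $D^2u=\operatorname{diag}(\varepsilon,2-\varepsilon)$ is compatible with it and has arbitrarily small determinant) and must be extracted from the equation itself, which is what the barrier argument in the cited proof accomplishes.
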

Note that in fact~$c_1, c_2$ can be made explicit in terms of~$\sup\abs{A}$. We apply this with~$A = A_t$ given by~\eqref{nonlinearLHS}. 

In the following we always assume the bound~$\norm*{\xi}_{C^2}<1$.
\begin{lemma}\label{DetEstimateCor}
Suppose~$u = u_t$ solves~\eqref{AbreuEq} for some~$t \in [0,1]$. Then there exist positive constants~$c$,~$c_1$,~$c_2$, such that a bound
\begin{equation*}
\norm{\xi}_{C^2}\norm{u}_{C^4} < c
\end{equation*}  
implies bounds
\begin{equation*}
0 < c_1 < \det D^2 u < c_2.
\end{equation*}
\end{lemma}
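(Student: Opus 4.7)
The plan is to reduce the statement to Proposition~\ref{DetEstimate} by controlling $\sup_M|A_t|$ in terms of the small quantity $\|\xi\|_{C^2}\|u\|_{C^4}$. Introduce the matrix-valued function
$$N(u,\xi) := \bigl(\mathbbm{1} + (\mathbbm{1}-\xi\,D^2 u\,\bar\xi\,D^2 u)^{1/2}\bigr)^{-1}\xi\,D^2 u\,\bar\xi,$$
so that $A_t = t\,(N(u,\xi))^{ab}_{,ab}$ and the task is to estimate $\|N\|_{C^2(M)}$.

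First I would show that for $c$ chosen sufficiently small the analytic expression defining $N$ is non-degenerate on $M$. The standing bound $\|\xi\|_{C^2}<1$ and the assumption $\|\xi\|_{C^2}\|u\|_{C^4}<c$ give $\|\xi\|_{C^0}^2\|D^2u\|_{C^0}^2\le (\|\xi\|_{C^2}\|u\|_{C^4})^2 < c^2$, so the spectral radius of $\xi\,D^2u\,\bar\xi\,D^2u$ is uniformly bounded by some $\kappa<1$ on $M$ as soon as $c<1$. Since $F(x)=(1+\sqrt{1-x})^{-1}$ is analytic on the open unit disk, the holomorphic functional calculus shows that $F(\xi\,D^2u\,\bar\xi\,D^2u)$ and all its partial derivatives with respect to the entries of $\xi$, $\bar\xi$ and $D^2u$ are bounded by a constant depending only on $\kappa$.

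Second, I would expand $N$ in the power series
$$N=\sum_{k\ge 0}c_k\,(\xi\,D^2u\,\bar\xi\,D^2u)^k\,\xi\,D^2u\,\bar\xi,$$
so that the $k$-th monomial carries $2k+2$ factors of $\xi$ (or $\bar\xi$) and $2k+1$ factors of $D^2u$. Applying the Leibniz rule to compute two $y$-derivatives, every resulting monomial in $\partial^2 N$ still has at least two factors estimated in $\|\xi\|_{C^2}$ and at least one factor estimated in $\|u\|_{C^4}$. Summing the series and using $\|\xi\|_{C^2}\|u\|_{C^4}<c<1$ for absolute convergence, this produces
$$\sup_M|A_t|\;\le\;C'\,\|N\|_{C^2}\;\le\;C\,\|\xi\|_{C^2}^2\,\|u\|_{C^4}\;=\;C\,\|\xi\|_{C^2}\cdot\|\xi\|_{C^2}\|u\|_{C^4}\;\le\;C\,c,$$
with $C$ depending only on $\kappa$.

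Finally, Proposition~\ref{DetEstimate} applied to \eqref{AbreuEq} with this bound on $\sup_M|A_t|$ delivers the required constants $c_1$, $c_2$ depending only on $c$. The main obstacle is the bookkeeping in the second step: one must verify that in the Leibniz expansion every additional factor of $\|u\|_{C^4}$, whether it comes from a higher-order monomial in the series or from a $D^2u$ being differentiated, is balanced by an additional power of $\|\xi\|_{C^2}$, so that the small factor $(\|\xi\|_{C^2}\|u\|_{C^4})^{2k}$ at the $k$-th level of the expansion absorbs all the extra derivatives. This counting is elementary once the uniform bounds on the derivatives of $F$ from step one are in place.
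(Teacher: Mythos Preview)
Your proposal is correct and follows the same strategy as the paper: reduce to Proposition~\ref{DetEstimate} by showing that a small bound on $\|\xi\|_{C^2}\|u\|_{C^4}$ controls $\sup|A_t|$, after first using smallness of $\|\xi\|_{C^0}\|D^2u\|_{C^0}$ to keep the spectrum of $\xi\,D^2u\,\bar\xi\,D^2u$ uniformly away from~$1$. The paper's proof simply asserts that, once the eigenvalues are bounded away from~$1$, the $C^0$ norm of $A_t$ is controlled by $\|\xi\|_{C^2}\|u\|_{C^4}$; your power-series and functional-calculus argument supplies the explicit bookkeeping the paper omits.
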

\begin{proof} By Proposition~\ref{DetEstimate} we only need to show that having bounds on the eigenvalues of~$\xi\,D^2u\,\bar{\xi}\,D^2u$ away from~$1$ and on~$\norm{\xi}_{C^2}\norm{u}_{C^4}$ imply~$C^0$ bounds on~$A_t$. The required bounds for the eigenvalues~$\delta_i$ away from~$1$ certainly hold provided~$\norm{\xi}^2_{u}$ is bounded away from~$1$, and this condition only depends on a sufficiently small upper bound for~$\norm{\xi}_{C^0}\norm{u}_{C^2}$. Then, if~$\norm{\xi}_{C^0}\norm{u}_{C^2}$ is sufficiently small, a~$C^0$ bound on the term
$\left(\check{\alpha}\,\xi\,D^2u\,\bar{\xi}\right)^{ab}_{,ab}$
only depends on a bound on~$\norm{\xi}_{C^2}\norm{u}_{C^4}$. As a consequence, a sufficiently small bound on~$\norm{\xi}_{C^2}\norm{u}_{C^4}$ gives a~$C^0$ bound on~$A_t$, as required.\qedhere
\end{proof}
Next, arguing as in~\cite{FengSzekelyhidi_abelian} Section~$4$, we regard~\eqref{AbreuEq}  as a linearised Monge-Amp\`ere equation
\begin{equation}\label{linearMA}
U^{ij} w_{ij} = A_t,
\end{equation}
where~$U^{ij}$ denotes the cofactor matrix of~$D^2 u$, and 
\begin{equation*}
w_{ij} = (\det D^2 u)^{-1}.
\end{equation*}
\begin{lemma} Suppose~$u$ solves the HcscK equation. Then there exist constants~$0<\alpha<1$,~$c_3 > 0$ such that the bound 
\begin{equation*}
\norm{\xi}_{C^2}\norm{u}_{C^4} < c
\end{equation*}  
of Lemma~\ref{DetEstimateCor} implies the bound
\begin{equation*}
\norm{w}_{C^{0,\alpha}} < c_3.
\end{equation*}  
\end{lemma}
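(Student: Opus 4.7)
The plan is to read the equation satisfied by $u$ in the form of the linearised Monge--Amp\`ere equation~\eqref{linearMA}, $U^{ij}w_{,ij}=A_t$, and apply an interior H\"older estimate of Caffarelli--Guti\'errez type, in the version of Trudinger--Wang cited just below Theorem~\ref{QuantPertThm} (Section~$3.7$, Corollary~$3.2$ in~\cite{TrudingerWang_MongeAmpere}). Concretely, I would proceed in the following steps.

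First, I invoke Lemma~\ref{DetEstimateCor} to obtain, under the assumption $\norm{\xi}_{C^2}\norm{u}_{C^4}<c$, two-sided bounds $0<c_1<\det D^2u<c_2$. This immediately yields $c_2^{-1}\leq w\leq c_1^{-1}$, hence a uniform $L^\infty$ bound on $w$, and it also guarantees that the Monge--Amp\`ere measure $\det D^2u\,d\mu$ is comparable to Lebesgue measure, so that it satisfies the doubling condition required by the interior H\"older theory for the linearised Monge--Amp\`ere operator. Second, inspecting the same argument already used in the proof of Lemma~\ref{DetEstimateCor}, I extract a uniform $C^0$ bound on the right-hand side $A_t$ defined in~\eqref{nonlinearLHS}; the key point is that the smallness of $\norm{\xi}_{C^2}\norm{u}_{C^4}$ keeps the eigenvalues of $\xi\,D^2u\,\bar\xi\,D^2u$ bounded away from $1$, so that the square-root expression appearing in $A_t$ is controlled in $C^0$ by $\norm{\xi}_{C^2}\norm{u}_{C^4}$.

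Third, with $w$ uniformly bounded, the Monge--Amp\`ere measure doubling, and the right-hand side $A_t$ bounded in $L^\infty$, I apply the Caffarelli--Guti\'errez interior $C^{0,\alpha}$ estimate (equivalently, the Trudinger--Wang corollary referenced above) on a ball inside any fixed periodicity cell. This produces $0<\alpha<1$ and a constant depending only on $c_1,c_2,n$ and $\sup|A_t|$, so that $\norm{w}_{C^{0,\alpha}(B)}$ is bounded on such a ball. Finally, by the $\bb{Z}^n$-periodicity of $u$ (and hence of $w$) and of all the estimates, covering $\bb{T}^n$ by finitely many translates of $B$ upgrades this local estimate to a global bound $\norm{w}_{C^{0,\alpha}(\bb{T}^n)}<c_3$.

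The step I expect to be the main obstacle is checking that the hypotheses of the Caffarelli--Guti\'errez/Trudinger--Wang theorem are met uniformly in $t\in[0,1]$ with constants depending only on the quantities allowed by the statement. The pinching $0<c_1<\det D^2u<c_2$ from Lemma~\ref{DetEstimateCor} is exactly what provides the doubling property of the Monge--Amp\`ere measure, and the smallness assumption on $\norm{\xi}_{C^2}\norm{u}_{C^4}$ is what controls $A_t$ in $L^\infty$; together these are precisely the inputs needed, so no extra regularity of $u$ or $\xi$ enters the final H\"older exponent $\alpha$ or constant $c_3$. The passage from local to global is then routine thanks to periodicity.
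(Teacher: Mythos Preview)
Your proposal is correct and follows essentially the same approach as the paper: both read the real moment map equation as the linearised Monge--Amp\`ere equation $U^{ij}w_{,ij}=A_t$, use Lemma~\ref{DetEstimateCor} to pinch $\det D^2u$ and to bound $A_t$ in $C^0$, and then invoke the Trudinger--Wang H\"older estimate. The only cosmetic difference is that the paper additionally observes that the assumed bound on $\norm{u}_{C^4}$ directly controls $D^2u$ from above, so that together with the lower determinant bound the eigenvalues of $D^2u$ are themselves two-sided bounded, whereas you phrase the hypothesis for the H\"older estimate purely in terms of the doubling property of the Monge--Amp\`ere measure; either formulation feeds into the same cited estimate.
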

\begin{proof}
A sufficiently small bound on~$\norm{\xi}_{C^2}\norm{u}_{C^4}$ clearly gives a bound on~$D^2 u$ and by Lemma~\ref{DetEstimateCor} also on~$(\det D^2 u)^{-1}$, so the eigenvalues of~$D^2 u$ are bounded and bounded away from~$0$. Then the estimates for linearised Monge-Amp\`ere equations in~\cite{TrudingerWang_MongeAmpere} Section~$3.7$ and Corollary~$3.2$, give the required H\"older estimates for a solution of~\eqref{linearMA}, depending only on a~$C^0$ bound on~$A_t$, which is also controlled by~$\norm{\xi}_{C^2}\norm{u}_{C^4}$.\qedhere
\end{proof}
\begin{remark}
On an abelian surface, our equation~\eqref{linearMA} also belongs to the class of two-dimensional linearised Monge-Amp\`ere equations with right hand side in divergence form, which is studied in detail in~\cite{NamLe_estimates}. This work shows that in our argument above the quantity~$\norm{\xi}_{C^2}\norm{u}_{C^4}$ may be replaced with~$\norm{\xi}_{C^1}\norm{u}_{C^3}$, in the two-dimensional case.
\end{remark} 
The subsequent step consists in regarding~$u$ as a solution of the Monge-Amp\`ere equation
\begin{equation*}
 w \det D^2 u = 1.
\end{equation*} 
Caffarelli's Schauder estimate~\cite{Caffarelli_interior} provides the following result.
\begin{lemma}
There exists a constant~$c_4 > 0$ such that the bound   
\begin{equation*}
\norm{\xi}_{C^2}\norm{u}_{C^4} < c
\end{equation*}  
of Lemma~\ref{DetEstimateCor} implies the bound
\begin{equation*}
\norm{u}_{2, \alpha} < c_4.
\end{equation*}   
\end{lemma}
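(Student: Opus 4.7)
The plan is to directly invoke Caffarelli's interior $C^{2,\alpha}$ estimate~\cite{Caffarelli_interior} for the real Monge-Amp\`ere equation, viewing~$u$ as a solution of
\begin{equation*}
\det D^2 u = \frac{1}{w},
\end{equation*}
which holds by the very definition~$w = (\det D^2 u)^{-1}$. Caffarelli's theorem gives interior~$C^{2,\alpha}$ estimates for strictly convex solutions of~$\det D^2 u = f$ in terms of the~$C^{0,\alpha}$ norm of~$f$, provided~$f$ is bounded above and below away from zero, and provided one has appropriate control on the convex envelope of~$u$ on the relevant domain.

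First I would verify that the right-hand side~$1/w$ is controlled in~$C^{0,\alpha}$. By Lemma~\ref{DetEstimateCor}, the smallness bound~$\norm{\xi}_{C^2}\norm{u}_{C^4} < c$ ensures~$c_1 < \det D^2 u < c_2$, so~$w$ takes values in the compact interval~$[1/c_2,1/c_1]$, bounded away from~$0$. Combined with the preceding H\"older estimate~$\norm{w}_{C^{0,\alpha}} < c_3$, an elementary computation yields a bound on~$\norm{1/w}_{C^{0,\alpha}}$ depending only on~$c_1, c_2, c_3$. Moreover the same lower bound on~$\det D^2 u$ together with the~$C^2$ bound on~$u$ coming from~$\norm{\xi}_{C^2}\norm{u}_{C^4} < c$ controls the eigenvalues of~$D^2 u$ from above and from below, so~$u$ is uniformly strictly convex.

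Next I would pass from Caffarelli's \emph{interior} estimate to a \emph{global} bound using the periodic setting. Since~$u = \frac{1}{2}\abs{y}^2 + \phi(y)$ with~$\phi$ having periodic Hessian, the function~$u$ is defined on all of~$\bb{R}^n$ and we may apply Caffarelli's estimate on, say, balls~$B_2(y_0) \subset \bb{R}^n$ to obtain local~$C^{2,\alpha}$ control on~$B_1(y_0)$. Covering a fundamental domain of~$\mathbb{T}^n$ by finitely many such balls, the resulting local estimates combine to give a uniform~$C^{2,\alpha}$ bound on~$D^2 u$, and hence the desired bound~$\norm{u}_{2,\alpha} < c_4$.

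The only delicate point is checking that the hypotheses of Caffarelli's theorem are satisfied at each scale in a manner independent of~$y_0$. This reduces to two items: uniform strict convexity of~$u$, which follows from the two-sided bounds on the eigenvalues of~$D^2 u$ noted above; and a uniform modulus of strict convexity for the sections of~$u$, which in our periodic setting is automatic from the translation invariance of the setup together with the $C^2$ control on~$u$. Once these inputs are in place, applying Caffarelli's estimate on each ball and taking the supremum over the finite cover concludes the proof.
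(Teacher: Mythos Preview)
Your proposal is correct and takes essentially the same approach as the paper: regard~$u$ as a solution of the Monge--Amp\`ere equation~$\det D^2 u = 1/w$ and invoke Caffarelli's~$C^{2,\alpha}$ estimate, using the two-sided determinant bound from Lemma~\ref{DetEstimateCor} and the H\"older control on~$w$ from the preceding lemma. The paper's own proof is a single sentence citing Caffarelli; you have simply spelled out the standard details of passing from interior to global estimates via the periodic structure.
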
  
In turn, standard Schauder estimates applied to the linearised Monge-Amp\`ere equation~\eqref{linearMA} gives a bound on~$\norm{w}_{C^{2, \alpha}}$, from which Caffarelli yields a bound on~$\norm{u}_{4,\alpha}$. The upshot is the following.
\begin{lemma}\label{C4bound} There exists a constant~$c^* > 0$ such that the bound   
\begin{equation*}
\norm{\xi}_{C^2}\norm{u}_{C^4} < c
\end{equation*}  
of Lemma~\ref{DetEstimateCor} implies the bound
\begin{equation*}
\norm{u}_{4, \alpha} < c^*.
\end{equation*} 
\end{lemma}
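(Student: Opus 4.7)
The plan is to close the regularity argument by two applications of elliptic theory, as announced just before the statement: a Schauder estimate for the linearised Monge-Amp\`ere equation~\eqref{linearMA}, followed by Caffarelli's Schauder estimate for the real Monge-Amp\`ere equation $\det D^2u = w^{-1}$ satisfied by $u$.

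\emph{Step 1 (set-up).} Combining the bound $\norm{u}_{2,\alpha}\leq c_4$ from the preceding lemma with the two-sided bound on $\det D^2u$ from Lemma~\ref{DetEstimateCor}, the eigenvalues of $D^2u$ are pinned to a compact subset of $(0,\infty)$. Hence the cofactor matrix $U^{ij}$ is uniformly elliptic with entries in $C^{0,\alpha}$, with all relevant norms controlled only by the constants already produced.

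\emph{Step 2 (Schauder on the linearised Monge-Amp\`ere equation).} I would then apply standard interior Schauder estimates to the equation $U^{ij}w_{ij}=A_t$. Inspecting~\eqref{nonlinearLHS}, the matrix
\begin{equation*}
M(\xi,D^2u):=\left(\mathbbm{1}+\left(\mathbbm{1}-\xi\,D^2u\,\bar{\xi}\,D^2u\right)^{1/2}\right)^{-1}\!\xi\,D^2u\,\bar{\xi}
\end{equation*}
is a smooth function of its arguments that vanishes identically at $\xi=0$, and $A_t=t\,M^{ab}_{,ab}$. Differentiating $M$ twice in $y$ and using the hypothesis $\norm{\xi}_{C^2}\norm{u}_{C^4}<c$ together with the $C^{2,\alpha}$ estimate on $u$ from the previous lemma allows one to bound $\norm{A_t}_{C^{0,\alpha}}$. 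Schauder then yields $\norm{w}_{C^{2,\alpha}}\leq C_2$.

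\emph{Step 3 (Caffarelli on the real Monge-Amp\`ere equation).} Finally, $u$ solves $\det D^2u = w^{-1}$ with right-hand side in $C^{2,\alpha}$, so Caffarelli's Schauder estimate~\cite{Caffarelli_interior} gives $\norm{u}_{4,\alpha}\leq c^*$, as required.

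The main obstacle is the quantitative control of $\norm{A_t}_{C^{0,\alpha}}$ in Step 2. Since $A_t$ is a second divergence of an expression containing $D^2u$, a naive estimate would require $D^4u$ to be H\"older continuous -- exactly what one is trying to prove. The way around this is to exploit the smallness hypothesis: because $A_t$ vanishes at $\xi=0$, the top-order contribution to $\norm{A_t}_{C^{0,\alpha}}$ carries a small prefactor proportional to $\norm{\xi}_{C^2}$. Combining the Schauder and Caffarelli estimates produces, schematically, an inequality of the form
\begin{equation*}
\norm{u}_{4,\alpha}\leq K_0+K_1\,\norm{\xi}_{C^2}\,\norm{u}_{4,\alpha},
\end{equation*}
and for $c$ sufficiently small in the hypothesis of Lemma~\ref{DetEstimateCor} the top-order term is absorbed into the left-hand side, yielding the desired $c^*$ in terms solely of the universal Schauder and Caffarelli constants.
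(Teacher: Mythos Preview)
Your Steps 1--3 reproduce exactly the paper's argument: use the $C^{2,\alpha}$ bound on $u$ from the previous lemma to make the coefficients $U^{ij}$ uniformly elliptic with H\"older entries, apply Schauder to $U^{ij}w_{ij}=A_t$ to upgrade $\norm{w}_{C^{0,\alpha}}$ to $\norm{w}_{C^{2,\alpha}}$, then feed this into Caffarelli for $\det D^2u=w^{-1}$ to obtain $\norm{u}_{4,\alpha}$. You have in fact gone further than the paper, which simply asserts that Schauder applies without addressing why $\norm{A_t}_{C^{0,\alpha}}$ is under control; your identification of the circularity and the absorption scheme to close it are correct and fill a gap the paper leaves implicit.

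One small correction to the absorption step. The prefactor on the dangerous term $[D^4u]_\alpha$ inside $\norm{A_t}_{C^{0,\alpha}}$ is $\norm{\partial_G M}_{C^0}$, which is $O(\norm{\xi}_{C^0}^2)$ since $M$ is quadratic in $\xi$ to leading order; it is not governed by the product $\norm{\xi}_{C^2}\norm{u}_{C^4}$. So shrinking $c$ in the hypothesis of Lemma~\ref{DetEstimateCor} does not by itself force the absorption. What does is the standing assumption $\norm{\xi}_{C^2}<1$ made just before Lemma~\ref{DetEstimateCor} (strengthened if necessary), or equivalently the smallness condition $\norm{\xi}_{C^2}<c/c^*$ that the paper imposes in the continuity argument immediately after the lemma. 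With that adjustment your argument closes.
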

Suppose now that~$\norm{u_t}_{C^{4, \alpha}}$ blows up along our path
$(u^{ab}_t)_{,ab} = A_t$.
Then for some~$\bar{t} \in (0, 1)$ we must have 
$\norm{u_{\bar{t}}}_{C^{4, \alpha}} = c^*$.
However if the Higgs tensor~$\xi$ satisfies the bound
\begin{equation*}
\norm{\xi}_{C^{2}} < \frac{c}{c^*} 
\end{equation*}
then Lemma~\ref{C4bound} shows that we must also have
$\norm{u_{\bar{t}}}_{C^{4, \alpha}} < c^*$,
a contradiction. This shows that the set of times~$t \in [0,1]$ for which~\eqref{AbreuEq} is solvable is closed. The equation is solvable for~$t = 0$ with~$u_0 = \frac{1}{2}\abs{y}^2$, so the set is non-empty. Moreover, by choosing~$c$ smaller if necessary, the bound~$\norm{\xi}^2_{u_t} < 1$ holds for all times, so by Theorem~\ref{thm:convexity} a solution for~$t = 1$ is unique if the real and imaginary parts of~$\xi$ are semidefinite. It remains to be seen that the set is also open. As usual this follows from the Implicit Function Theorem. Fix~$t\in[0,1]$ and consider the operator
\begin{equation*}
\mathcal{K}(u) = u^{ab}_{,ab} - A_t
\end{equation*}
with~$A_t$ given by~\eqref{nonlinearLHS}. The differential of this operator was already computed, essentially, in our formula~\eqref{eq:HK_secondvar} for the second variation of the \textit{HK}-energy: consider the Biquard-Gauduchon functional~$\mathcal{H}(u,\xi)$ defined in~\eqref{eq:BGFunc} and the periodic~$K$-energy~$\mathcal{F}(u)$ of~\eqref{eq:KEn}; then~$\mathcal{K}(u)$ satisfies
\begin{equation*}
D\left(\mathcal{F}+t\mathcal{H}\right)_{(u,\xi)}(\varphi)=\frac{1}{2}\int\mathcal{K}_t(u)\,\varphi\,d\mu.
\end{equation*}
So, for the differential of~$\mathcal{K}$ we have
\begin{equation*}
\frac{1}{2}\int \left(D\mathcal{K}_t\right)_u(\varphi)\,\varphi\,d\mu=\frac{d^2}{ds^2}\left(\mathcal{F}+t\mathcal{H}\right)(u+s\,\varphi,\xi)
\end{equation*}
and~$D\mathcal{K}$ is the Hilbert space Hessian of~$\widehat{\m{F}}_t:=\mathcal{F}+t\mathcal{H}$. This is self-adjoint by construction, and computations similar to the proof of Theorem~\ref{thm:convexity} show that it has trivial kernel, for~$t\in[0,1]$. This implies openness and completes the proof of Theorem~\ref{QuantPertThm}.

\subsection{Translation-invariant solutions}\label{1dSec}

In the present Section we shall prove Theorem~\ref{HcscK1dThm}. Thus we consider Higgs tensors~$\xi\in\bb{C}^{2\times 2}$ depending on a single variable, say~$\xi = \xi^{ab}(y^1)$. The complex moment map equation~$\xi^{ab}_{,ab}=0$ reduces to~$\xi^{11}_{11} = 0$, i.e. to the condition~$\xi^{11} = c \in \bb{C}$, by periodicity. With our further assumption~$\det(\xi) = 0$, all possible solutions are given by
\begin{equation}\label{eq:sol_lowrank_complex}
\begin{pmatrix}
0 & 0\\ 0& \xi^{22}(y^1)
\end{pmatrix}
\mbox{ or }
\begin{pmatrix} c & \xi^{12}(y^1)\\ \xi^{12}(y^1) & \frac{\left(\xi^{12}(y^1)\right)^2}{c}\end{pmatrix}.
\end{equation}
We look for solutions~$u(y)$ of the real moment map equation of the form
\begin{equation*}
u(y)=\frac{1}{2}|y|^2+f(y^1)
\end{equation*}
for a periodic function~$f(y^1)$, so 
\begin{equation*}
D^2(u)=\begin{pmatrix}1+f''(y^1) &0\\ 0& 1\end{pmatrix}.
\end{equation*}

\begin{remark}
Once we show that there are such solutions, Theorem~\ref{thm:convexity} will guarantee that in fact all possible solutions~$u(y)$ are of this form. Notice also that in the present two-dimensional situation and under the assumption~$\mathrm{det}(\xi)=0$, the conditions~$\SpecRad(u,\xi)<1$ and~$\norm{\xi}^2<1$ are equivalent. 
\end{remark}

In this low-rank case, the real moment map equation~\eqref{eq:HcscKSymplRealEq} becomes
\begin{equation*}
\left(-(1+f'')+\frac{(1+f'')|\xi^{11}|^2+|\xi^{12}|^2}{1+(1-\norm{\xi}^2_{u})^{1/2}}\right)_{,11}=0.
\end{equation*}
Notice however that if~$\xi$ is of the first type in~\eqref{eq:sol_lowrank_complex} then the equation reduces to~$f^{(4)}=0$, i.e. the metric must have constant coefficients. So it is enough to discuss the case
\begin{equation*}
\xi=\begin{pmatrix} c & F(y^1)\\ F(y^1) & \frac{F(y^1)^2}{c}\end{pmatrix}
\end{equation*}
for some periodic function~$F$. Then the equation becomes
\begin{equation}\label{eq:real_mm_twoderivs}
\left(-(1+f'')+\frac{(1+f'')|c|^2+|F|^2}{1+\left(1-\frac{1}{|c|^2}\left((1+f'')|c|^2+|F|^2\right)^2\right)^{\frac{1}{2}}}\right)''=0
\end{equation}
which is equivalent to the condition 
\begin{equation*}
\frac{(1+f'')|c|^2+|F|^2}{1+\left(1-\frac{1}{|c|^2}\left((1+f'')|c|^2+|F|^2\right)^2\right)^{\frac{1}{2}}}=1+k+f'' 
\end{equation*}
for some real constant~$k$, or equivalently,~$f''$ must satisfy the algebraic equation 
\begin{equation}\label{eq:real_mm_abelian_alt}
\frac{|F|^2}{|c|^2}+\left(f''+1\right)=\frac{2\left(f''+k+1\right)}{|c|^2+\left(f''+k+1\right)^2}.
\end{equation}
When~$F \equiv 0$ a solution~$f''$ is a suitable constant, corresponding to a constant almost-complex structure and a flat metric. We will prove our existence result by perturbing around~$F \equiv 0$ in a quantitative way. It is convenient to introduce the operators 
\begin{equation*}
\begin{split}
P:C^{\infty}(C^1,\bb{R})&\to C^{\infty}_0(S^1,\bb{R})\\
\varphi&\mapsto\varphi'';
\end{split}
\end{equation*}
\begin{equation*}
\begin{split}
Q: C^{\infty}(S^1,\bb{R})\times& C^{\infty}(S^1,\bb{R})\to C^{\infty}(S^1,\bb{R})\\
(\varphi_1,&\varphi_2)\mapsto \frac{(1+\varphi_1)|c|^2+\varphi_2^2}{1+\left(1-\frac{1}{|c|^2}\left((1+\varphi_1)|c|^2+\varphi_2^2\right)^2\right)^{\frac{1}{2}}}-(1+\varphi_1);
\end{split}
\end{equation*}
\begin{equation*}
\begin{split}
R: C^\infty(S^1,\bb{R})\times C^\infty(S^1,\bb{C})&\to C^\infty_0(S^1,\bb{R})\\
(f,F)&\mapsto P\left(Q(P(f),|F|)\right).
\end{split}
\end{equation*}
We will show that if~$|F|\leq|c|<\frac{3}{10}$, then there exists~$f\in C^\infty(S^1,\bb{R})$, unique up to an additive constant, such that
\begin{align*}
&1+f'' >0,\\
&(1+f'')|c|^2+|F|^2 <|c|,\\
&R(f,F) =0.
\end{align*}
Clearly~$P$ is surjective, and its kernel are the constant functions, so it is enough to show that with our assumptions there is a unique~$\phi\in C^\infty_0(S^1,\bb{R})$ satisfying the positivity condition
\begin{equation}\label{positivityIneq}
\phi+1>0,
\end{equation}  
as well as the uniform nonsingularity condition
\begin{equation}\label{nonsingIneq}
\left((1+\phi)|c|^2+|F|^2\right)^2\leq |c|^2-\varepsilon|c|^2
\end{equation}
for some \emph{fixed}~$0 < \varepsilon < 1$, and such that
\begin{equation*}
P(Q(\phi,F))=0.
\end{equation*}
For~$F=0$ we have a unique solution,~$\phi=0$. We consider the continuity path
\begin{equation*}
P(Q(\phi,t\,F))=0\mbox{ for }0\leq t\leq 1,
\end{equation*}
and prove openness and closedness, as usual. 

Consider the differential of the operator
\begin{equation*}
\begin{split}
C^{2,\alpha}_0(S^1)&\to C^{0,\alpha}_0(S^1)\\
\phi&\mapsto P(Q(\phi,F)),
\end{split}
\end{equation*}
namely
\begin{equation*}
\dot{\phi}\mapsto\left[\left(\frac{|c|^2}{\left(1-\frac{1}{|c|^2}p^2\right)^{\frac{1}{2}}\left(1+\left(1-\frac{1}{|c|^2}p^2\right)^{\frac{1}{2}}\right)}-1\right)\dot{\phi}\right]'',
\end{equation*}
where we set~$p = (1+\phi)|c|^2+|F|^2$ for convenience. This is clearly an isomorphism provided its coefficient is bounded away from~$0$. Since we have~$p^2\leq |c|^2-\varepsilon|c|^2$ by~\eqref{nonsingIneq}, a short computation shows that it is enough to assume
\begin{equation*}
|c|^2<\sqrt{\varepsilon}-\varepsilon.
\end{equation*}
What remains to be seen is that the (automatically open) positivity condition~\eqref{positivityIneq} is also closed along the path, while conversely the (automatically closed) uniform nonsingularity condition~\eqref{nonsingIneq} is also open. Then further regularity would follow by a standard bootstrapping argument. Let us prove both claims at once. Our equation holds if and only if there is a constant~$k$ such that
\begin{equation*}
\frac{(1+\phi)|c|^2+|F|^2}{1+\left(1-\frac{1}{|c|^2}\left((1+\phi)|c|^2+|F|^2\right)^2\right)^{\frac{1}{2}}}-(1+\phi)=k.
\end{equation*}
Note that this implies~$1+\phi+k\geq 0$. Moreover, since~$\phi$ has zero average, we have
\begin{equation*}
k+1=\int_0^1\frac{p}{1+\left(1-\frac{1}{|c|^2}p^2\right)^{\frac{1}{2}}} dy^1,\, p = (1+\phi)|c|^2+|F|^2 
\end{equation*}
so we also have~$k+1\geq 0$ and~$k+1\leq p$, hence~$k+1\leq |c|$ along the continuity path. Moreover we have~$1+\phi \geq -k \geq 1-|c|$, so if~$|c| < 1$ then~$1+\phi$ is bounded uniformly away from~$0$. This shows that the positivity condition~\eqref{positivityIneq} is closed along the path provided~$|c| <1$. Now as in~\eqref{eq:real_mm_abelian_alt} we may write our equation as
\begin{equation*}
\left|\frac{F}{c}\right|^2+\phi+1=\frac{2(\phi+1+k)}{|c|^2+(\phi+1+k)^2}.
\end{equation*}
Then~$\phi+1+k$ is a positive solution of the equation
\begin{equation*}
|F|^2-k|c|^2+|c|^2x+\left(\left|\frac{F}{c}\right|^2-k\right)x^2+x^3=2x.
\end{equation*}
Note that all the coefficients on the left hand side are positive. Using the fact that~$-k \geq 1-|c|$, we see that a positive solution of the equation cannot be larger than~$1+|c|$. Hence we find
\begin{equation*}
\phi+1\leq1+|c|-k\leq 2+|c|
\end{equation*}
from which we get, assuming~$|F|^2\leq |c|^2$,
\begin{equation*}
(1+\phi)|c|^2+|F|^2<|c|^2\left(3+ |c|\right).
\end{equation*}
It follows that~\eqref{nonsingIneq} certainly holds for~$0 < \varepsilon < 1$ as long as
\begin{equation*}
|c|^2\left(3+|c|\right)^2\leq 1-\varepsilon
\end{equation*}
hence~$|c|$ must be less or equal than the first positive root of the polynomial
\begin{equation}\label{eq:polinomio_limite}
x^4+6 x^3+9 x^2+\varepsilon -1.
\end{equation}
Under these conditions on~$\varepsilon$,~$|c|$ the uniform nonsingularity condition~\eqref{nonsingIneq} would hold automatically along the path. Finally we need to choose the values of~$|c|$,~$\varepsilon$ in our argument so that all constraints are satisfied. Recall these are
\begin{equation*}
|c| < \left(\sqrt{\varepsilon}-\varepsilon\right)^{\frac{1}{2}},\, |c| < 1
\end{equation*}
as well as the fact that~$|c|$ is less or equal than the first positive root of~\eqref{eq:polinomio_limite}. Direct computation shows that~$\varepsilon= 1/100$,~$|c|<\frac{3}{10}$ is an admissible choice. 

We conclude by examining the integrability condition, as characterised in Proposition~\ref{proposition:integrability_xi}, for the solutions of the HcscK system we have constructed. Since both~$\xi$ and~$D^2u$ depend just on the variable~$y^1$, the integrability condition on~$\xi$ implies  
\begin{equation*}
\partial_1\left(G\xi G\right)_{22}=\partial_2\left(G\xi G\right)_{12}=0.
\end{equation*}
Hence, if~$\xi$ is of the first type in~\eqref{eq:sol_lowrank_complex}, we find~$F'=0$, while for the second type we must have~$F\,F'=0$. In both cases~$\xi$ must be constant. This completes our proof of Theorem~\ref{HcscK1dThm}.

\subsection{Low rank solutions}\label{LowRankSec} 
In this Section we shall prove Theorem~\ref{LowRankThm}. We work with the set of all smooth pairs~$(u, \xi)$, endowed with the topology induced by~$H^{p+2}(\mathbb{T}, \bb{R}) \times H^p(\mathbb{T}, \bb{C})$ for some integer~$p\geq 2$. We will show that, at least nearby a constant~$\xi_0$, lying outside an exceptional subset with empty interior, the locus of solutions to the complex moment map equation~$\xi^{ab}_{,ab} = 0$ satisfying~$\det(\xi) = 0$ is an infinite-dimensional submanifold in the linear space of all solutions to that equation. We claim that this is enough to establish Theorem~\ref{LowRankThm}. Indeed we showed in Sections~\ref{ConvexitySec} and~\ref{QuantPertSec}  that the differential of the operator corresponding to the real moment map equation in~\eqref{eq:HcscKSymplRealEq}, with respect to~$u$, is invertible at any point~$(\tilde{u}, \tilde{\xi})$ that is a solution to the periodic HcscK system. By the Implicit Function Theorem this means that there exists a solution~$u \in H^{p+2}$, which can be expressed locally as a smooth function of~$\xi$, nearby such a~$(\tilde{u}, \tilde{\xi})$. We apply this to the pair~$(u_0, \xi_0)$, and restrict~$\xi$ to lie in the submanifold cut out by~$\det(\xi) = 0$ in the linear space of all solutions to the complex moment map equation. For such smooth~$\xi$, the corresponding~$u$ does not depend on~$p$, by uniqueness, so it is also smooth and Theorem~\ref{LowRankThm} follows.
 
Let us show the claimed submanifold property. As the complex moment map equation is linear, it can be solved by considering the Fourier expansion of~$\xi$,
\begin{equation*}
\xi = \sum_{k \in \bb{Z}^2} \xi_k e^{2\pi i k\cdot x}
\end{equation*}
for constant, complex symmetric matrices~$\xi_k$. Then we have
\begin{equation*}
(\xi^{ab})_{,ab} = (2\pi i)^2\sum_{k \in \bb{Z}^2} \xi^{ab}_k k_{a} k_b e^{2\pi i k\cdot x}
\end{equation*}
so the complex moment map becomes 
\begin{equation*}
k^{T} \xi_k k = 0,\, k\in\bb{Z}^2.
\end{equation*}
Thus solutions can be parametrised as
\begin{align*}
& \xi = \xi_{(0,0)} + \sum_{k_1\neq 0} \left(\begin{matrix}
  0 & \xi^{12}_{(k_1, 0)} \\
  \xi^{12}_{(k_1, 0)}  & \xi^{22}_{(k_1, 0)}  
\end{matrix}\right)e^{2\pi i  k_1 x_1 } + \sum_{k_2\neq 0} \left(\begin{matrix}
  \xi^{11}_{(0,k_2)} & \xi^{12}_{(0, k_2)} \\
  \xi^{12}_{(0, k_2)}  & 0  
\end{matrix}\right)e^{2\pi i k_2 x_2}\\
&+\sum_{k_1 k_2 \neq 0}\left(\begin{matrix}
  \xi^{11}_k & -\frac{1}{2}\left(\xi^{11}_k \frac{k_1}{k_2} + \xi^{22}_k \frac{k_2}{k_1}\right)\\
-\frac{1}{2}\left(\xi^{11}_k \frac{k_1}{k_2} + \xi^{22}_k \frac{k_2}{k_1}\right) & \xi^{22}_k  
\end{matrix}\right) e^{2\pi i k\cdot x}.
\end{align*}
The set of all~$H^p$ solutions~$\mathcal{S}$, with~$p \geq 2$, is an infinite dimensional linear Hilbert manifold, endowed with the smooth function 
$\det\!: \mathcal{S} \to H^{p-2}(\mathbb{T}, \bb{C})$.
We apply the Implicit Function Theorem to the pair~$(\mathcal{S}, \det)$. The differential of~$\det$ at a solution 
\begin{equation*}
\xi_0 = \left(\begin{matrix}
\alpha & \beta\\
\beta & \gamma
\end{matrix}\right) 
\end{equation*}
is given by
\begin{equation*}
\begin{split}
(d\det)_{\xi_0}(\xi)=& \mrm{Tr} \operatorname{adj}(\xi_0) \xi= \gamma \xi^{11} - 2\beta \xi^{12} +  \alpha \xi^{22}\\
=& \gamma \xi^{11}_{(0,0)} - 2\beta \xi^{12}_{(0,0)} +  \alpha \xi^{22}_{(0,0)}\\
&+ \sum_{k_1\neq 0} \left(-2\beta \xi^{12}_{(k_1, 0)} + \alpha \xi^{22}_{(k_1, 0)}\right) e^{2\pi i  k_1 x_1 }\\
&+ \sum_{k_2\neq 0} \left( 
  \gamma \xi^{11}_{(0,k_2)} -2\beta \xi^{12}_{(0, k_2)}\right)e^{2\pi i k_2 x_2}\\
&+\sum_{k_1 k_2 \neq 0}\left(  \gamma \xi^{11}_k + \beta\left(\xi^{11}_k \frac{k_1}{k_2} + \xi^{22}_k \frac{k_2}{k_1}\right)+\alpha \xi^{22}_k \right) e^{2\pi i k\cdot x}.
\end{split}
\end{equation*}
In order to check if~$\det$ is a submersion we need to see if its differential is onto, i.e. we need to solve the PDE
\begin{equation*}
(d\det)_{\xi_0}(\xi) = f.
\end{equation*}
Suppose now that~$\xi_0$ has constant coefficients. Then this becomes the system of equations, to be solved for Fourier modes~$\xi^{ab}_{k}$, given by
\begin{align}\label{FourierSystem}
&\nonumber\gamma \xi^{11}_{(0,0)} - 2\beta \xi^{12}_{(0,0)} +  \alpha \xi^{22}_{(0,0)} = f_{(0,0)},\\
&\nonumber-2\beta \xi^{12}_{(k_1, 0)} + \alpha \xi^{22}_{(k_1, 0)} = f_{(k_1,0)},\, k_1\neq 0,\\
&\nonumber \gamma \xi^{11}_{(0,k_2)} -2\beta \xi^{12}_{(0, k_2)} = f_{(0,k_2)},\, k_2\neq 0,\\
&  \left(\gamma + \beta  \frac{k_1}{k_2}\right)\xi^{11}_k + \left(\alpha + \beta\frac{k_2}{k_1}\right)\xi^{22}_k = f_{k},\,k_1 k_2 \neq 0.
\end{align}
The differential is onto iff this system can be solved for arbitrary~$f_k$. If this happens, the tangent space~$T_{\xi_0}\mathcal{S}$ is identified with the space of solutions to the corresponding homogeneous system. Thus we are interested in conditions under which the system is solvable. 

If~$\beta \neq 0$, then clearly the first (sets of) equations in~\eqref{FourierSystem} are solvable, and the last is also solvable provided for each~$k$ with~$k_1 k_2 \neq 0$ we do not have both
\begin{equation*}
\gamma = - \frac{k_1}{k_2} \beta,\, \alpha = - \frac{k_2}{k_1}\beta.
\end{equation*}
A sufficient condition is that~$\alpha$,~$\gamma$ are not rational multiples of~$\beta$. This completes the proof of Theorem~\ref{LowRankThm}.

\section{Solutions of the toric HcscK system}\label{sec:toric}
In the present Section we prove our existence result on toric surfaces, Theorem~\ref{QuantPertThmToric}.

Firstly, Lemma~\ref{lemma:complex_mm_sol} can be used to find non-trivial solutions of the complex moment map equation~$\xi^{ab}_{,ab}=0$ also on a toric manifold. If~$T^{abc}$ is skew-symmetric in~$b$,~$c$ and vanishes sufficiently fast at the boundary of~$P$, the matrix
\begin{equation*}
\xi^{ab}=\diff_cT^{abc}+\diff_cT^{bac}
\end{equation*}
solves the moment map equation, and~$D^2u_P\,\xi\,D^2u_P$ is smooth up to the boundary of~$P$. 

Let us digress for a moment to show that a nontrivial solution of the complex moment map equation on a toric manifold corresponds to a non-integrable deformation of the complex structure (see Proposition~\ref{proposition:integrability_xi}). To see this, first notice that computations analogous to the proof of Lemma~\ref{lemma:intbyparts_realmm} also give the following result.
\begin{lemma}\label{lemma:xi_complex_mm_intbyparts}
Let~$P$ be a Delzant polytope, and let~$\xi$ be the representative of a first-order deformation of the complex structure of the corresponding toric manifold. Then for any~$f\in\m{C}^\infty(P)$ we have
\begin{equation*}
\int_P(\xi^{ij})_{,ij}f\mrm{d}\mu=\int_P\xi^{ij}f_{,ij}\mrm{d}\mu.
\end{equation*}
\end{lemma}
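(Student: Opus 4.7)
The strategy follows that of Lemma~\ref{lemma:intbyparts_realmm}: I would approximate $P$ by the cut-off polytopes $P_\delta$ (with faces parallel to those of $P$ at distance $\delta$), integrate by parts twice inside $P_\delta$ where all tensors are smooth, and show that the boundary integrals on $\partial P_\delta$ vanish in the limit $\delta\to 0$.

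Stokes' theorem applied on $P_\delta$ to the vector field with components $(\xi^{ij})_{,i} f - \xi^{ij} f_{,i}$ yields
\[
\int_{P_\delta}\!\bigl[(\xi^{ij})_{,ij}\,f - \xi^{ij}\,f_{,ij}\bigr]\,\mrm{d}\mu = \int_{\partial P_\delta}\!\bigl[(\xi^{ij})_{,i} f - \xi^{ij} f_{,i}\bigr]\,\nu_j\,\mrm{d}\sigma',
\]
where $\nu$ is the outer normal. On the face $S_{\delta,r}$ parallel to $S_r$, $\nu$ is proportional to $-\nabla\ell^r$, and since $\ell^r$ is affine-linear one has $(\xi^{ij})_{,i}(\partial_j\ell^r) = \partial_i(\xi^{ij}\partial_j\ell^r) = \mrm{div}(\xi\,\nabla\ell^r)$. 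Up to positive factors absorbed in the appropriate surface measure, the integrand on $S_{\delta,r}$ therefore reduces to a linear combination of $\mrm{div}(\xi\,\nabla\ell^r)\,f$ and $(\xi\,\nabla\ell^r)\cdot\nabla f$.

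By Proposition~\ref{proposition:toric_boundarycond} we may write $\xi = H\,\Phi\,H$, with $H := (D^2u_P)^{-1}$ symmetric and smooth on $\bar P$ and $\Phi$ smooth on $\bar P$. A local computation as in the proof of Lemma~\ref{lemma:intbyparts_realmm} gives $H\,\nabla\ell^r = O(\ell^r)$ in a neighbourhood of $S_r$, so $\xi\,\nabla\ell^r = H\,\Phi\,H\,\nabla\ell^r = O(\ell^r)$, which is $O(\delta)$ on $S_{\delta,r}$. This immediately makes the contribution of $(\xi\,\nabla\ell^r)\cdot\nabla f$ to the boundary integral an $O(\delta)$ term vanishing with $\delta$.

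The main technical point is the finer vanishing $\mrm{div}(\xi\,\nabla\ell^r)|_{S_r}=0$. Writing $\xi\,\nabla\ell^r = \ell^r\,V$ for a smooth vector field $V$ on $\bar P$, the identity $\mrm{div}(\xi\,\nabla\ell^r) = V\cdot\nabla\ell^r + \ell^r\,\mrm{div}(V)$ reduces the claim to $V\cdot\nabla\ell^r|_{S_r} = 0$, equivalently $(\nabla\ell^r)^{\transpose}\,\xi\,\nabla\ell^r = O((\ell^r)^2)$. Here the symmetry of $H$ enters decisively: it gives $(\nabla\ell^r)^{\transpose}H = O(\ell^r)$ in addition to $H\,\nabla\ell^r = O(\ell^r)$, and therefore
\[
(\nabla\ell^r)^{\transpose}\,\xi\,\nabla\ell^r = \bigl((\nabla\ell^r)^{\transpose}H\bigr)\,\Phi\,\bigl(H\,\nabla\ell^r\bigr) = O\bigl((\ell^r)^2\bigr).
\]
Thus $\mrm{div}(\xi\,\nabla\ell^r) = O(\delta)$ on $S_{\delta,r}$, so the remaining boundary term is also $O(\delta)$; summing over the faces and letting $\delta\to 0$ yields the lemma. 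The only real obstacle is precisely this quadratic rather than merely linear vanishing of $\xi\,\nabla\ell^r$ in the normal direction at $S_r$; without the symmetry of $H$ the boundary integrand would only be bounded and its integral would not tend to zero.
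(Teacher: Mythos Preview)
Your proof is correct and follows essentially the same approach that the paper has in mind: the paper does not write out the argument, merely noting that ``computations analogous to the proof of Lemma~\ref{lemma:intbyparts_realmm}'' suffice, and your cut-off--and--integrate-by-parts argument, together with the factorisation $\xi = H\Phi H$ and the symmetry of $H = (D^2u_P)^{-1}$ to extract the quadratic vanishing $(\nabla\ell^r)^{\transpose}\xi\,\nabla\ell^r = O((\ell^r)^2)$, is exactly the analogous computation. The only cosmetic difference is that Lemma~\ref{lemma:intbyparts_realmm} carries out the expansion in explicit local coordinates at a vertex, whereas you phrase the same estimate more invariantly via the factorisation; the content is identical.
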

Recall that the boundary conditions on~$\xi$ imply that~$\Phi=D^2u\,\xi\,D^2u$ is a matrix-valued function, smooth up to the boundary of~$P$. From Lemma~\ref{lemma:xi_complex_mm_intbyparts} we see that~$\xi$ solves the complex moment map if and only if~$\Phi$ is orthogonal to the space of Hessian matrices with respect to the~$L^2$ product on~$\m{C}^\infty(P)$ defined by~$D^2u$. The integrability condition for~$(u,\xi)$ in~\ref{proposition:integrability_xi} then implies
\begin{corollary}\label{cor:toric_integrability}
A solution of the complex moment map equation~$\xi$ corresponds to an integrable deformation of the complex structure if and only if it vanishes identically.
\end{corollary}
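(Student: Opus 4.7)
The plan is to combine Proposition~\ref{proposition:integrability_xi} with Lemma~\ref{lemma:xi_complex_mm_intbyparts} applied to a carefully chosen test function. The ``if'' direction ($\xi \equiv 0 \Rightarrow$ integrable and solves the complex moment map equation) is immediate, so all the work is in the converse.

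First I would suppose that $\xi$ solves~$\xi^{ab}_{,ab} = 0$ and corresponds to an integrable deformation. By Proposition~\ref{proposition:integrability_xi} there exists a (complex-valued) function $\varphi$ on~$P^\circ$ with $D^2u\,\xi\,D^2u = D^2\varphi$. A preliminary check is that $\varphi$ extends to a smooth function on the whole polytope: its Hessian equals $D^2u\,\xi\,D^2u$, which by Proposition~\ref{proposition:toric_boundarycond} is smooth up to~$\diff P$; since $P$ is convex, integrating twice along segments shows that $\varphi$ itself extends to an element of $\m{C}^\infty(P)$, modulo an affine linear term that may be discarded without loss of generality.

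Next I would apply Lemma~\ref{lemma:xi_complex_mm_intbyparts} to the test function $f = \bar\varphi \in \m{C}^\infty(P)$. Since $\xi^{ab}_{,ab} = 0$, the lemma yields
\begin{equation*}
0 \;=\; \int_P \xi^{ij}\, \bar\varphi_{,ij}\, \mrm{d}\mu \;=\; \int_P \xi^{ij}\, u_{ia}\, \bar\xi^{ab}\, u_{bj}\, \mrm{d}\mu \;=\; \int_P \mrm{Tr}\!\left(\xi\,D^2u\,\bar\xi\,D^2u\right)\mrm{d}\mu,
\end{equation*}
where I used that $\bar\varphi_{,ij} = \overline{\varphi_{,ij}} = u_{ia}\bar\xi^{ab} u_{bj}$ because $u$ is real. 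The integrand is precisely the pointwise squared norm $\norm{\xi}^2_u$, which is nonnegative: indeed, as shown in the proof of Theorem~\ref{thm:real_mm_expl}, the endomorphism $\xi\,D^2u\,\bar\xi\,D^2u$ is similar to a Hermitian positive-semidefinite matrix with real nonnegative eigenvalues $\delta_i$, so $\mrm{Tr}\!\left(\xi\,D^2u\,\bar\xi\,D^2u\right) = \sum_i \delta_i \geq 0$. Hence the vanishing of the integral forces $\norm{\xi}_u \equiv 0$ on~$P$, and a direct computation (e.g.~diagonalising $D^2u$ and using that the resulting symmetric matrix $\xi'$ satisfies $\sum_{i,j} \abs{\xi'_{ij}}^2 = 0$) then gives $\xi \equiv 0$.

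The main obstacle is conceptual rather than technical: one must recognise that the correct test function is $\bar\varphi$ rather than $\varphi$ itself, so that the resulting quadratic expression in $\xi$ produces the Hermitian pairing $\xi\,D^2u\,\bar\xi\,D^2u$ with a definite sign, rather than $\xi\,D^2u\,\xi\,D^2u$ which carries no useful positivity. The technical verification that $\varphi$ extends smoothly to the closed polytope is then routine given Proposition~\ref{proposition:toric_boundarycond}, and the final step of passing from the vanishing of the pointwise norm to the vanishing of $\xi$ is purely linear algebra.
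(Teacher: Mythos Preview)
Your proof is correct and is essentially the explicit version of the paper's argument. The paper observes that Lemma~\ref{lemma:xi_complex_mm_intbyparts} says $\xi$ solves the complex moment map equation iff $\Phi=D^2u\,\xi\,D^2u$ is $L^2$-orthogonal (with respect to the inner product defined by $D^2u^{-1}$) to all Hessians of smooth functions on~$P$, while Proposition~\ref{proposition:integrability_xi} says integrability means $\Phi$ is itself such a Hessian; pairing $\Phi$ against $\bar\Phi$ then forces $\norm{\xi}_u\equiv 0$. Your choice of test function $f=\bar\varphi$ is exactly this pairing written out, and your check that $\varphi$ extends to $\m{C}^\infty(P)$ fills in the one point the paper leaves implicit.
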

To complete the proof of Theorem~\ref{QuantPertThmToric} we study the real moment map equation via a continuity method, similarly to the periodic case. As uniform $K$-stability implies the existence of a torus-invariant cscK metric, there exists a symplectic potential~$v$ satisfying~$v^{ab}_{,ab} =-C$. Then we can consider the continuity method
\begin{equation*}
u^{ab}_{,ab} =-C+A_t   
\end{equation*}
with~$A_t$ is given by~\eqref{nonlinearLHS}, for a fixed non-zero~$\xi$ and~$t\in[0,1]$. We have a solution for~$t=0$, and openness can be proved as in the periodic case, since the toric \textit{HK}-energy is convex. To prove closedness of the continuity method, we need a priori estimates for the prescribed curvature problem on a toric manifold.
\begin{remark}
The conditions on~$\m{L}_C$ in Definition \ref{def:Kstab} can be stated for~$\m{L}_A$, if~$A$ is a sufficiently regular function on~$P$. In this context, we will say that~$P$ (or~$M$) is \emph{$(A,\lambda)$-stable} if
\begin{equation}\label{eq:properness_Futaki}
\forall f\in\m{C}_\infty,\ \m{L}_A(f):=\int_{\diff P}f\,\mrm{d}\sigma-\int_P A\,f\,\mrm{d}\mu\geq\lambda\int_{\diff P}f\mrm{d}\sigma.
\end{equation}
\end{remark}
Donaldson~\cite{Donaldson_Abreu_IntEst} conjectures that a priori estimates of all orders on~$h = u - u_P$ for solutions of Abreu's equation 
\begin{equation}\label{eq:Abreu}
u^{ab}_{,ab}=-A
\end{equation}
hold, up to the boundary of the polytope, if and only if~$P$ is~$(A,\lambda)$-stable for some positive~$\lambda$. Given such a priori estimates on~$h = u - u_P$, it is straightforward to adapt the argument in the proof of Theorem~\ref{QuantPertThm} showing the closedness of our continuity method for sufficiently small~$\xi$.

Under the additional condition that the function~$A$ in~\eqref{eq:Abreu} is \textit{edge-nonvanishing}, i.e.~$A$ does not vanish identically on any side of the polygon, a proof of Donaldson's conjecture for surfaces has appeared in~\cite{Li_toric_affinegeom}. Assuming this result, it remains to show that the property~\eqref{eq:properness_Futaki} holds along our continuity method, for~$A=C-A_t$. Let us first note that, as in the proof of Lemma~\ref{DetEstimateCor}, a bound on~$\norm{\xi\,D^2u}_{C^2}$ gives a bound on~$A_t$, so if~$\norm{\xi\,D^2u}_{C^2}$ is sufficiently small then~$A$ is edge-nonvanishing. Notice that, even though the~$C^2$-norm of~$u$ blows up at the boundary of~$P$, the boundary conditions for~$\xi$ in Proposition~\ref{proposition:toric_boundarycond} show that~$\xi\,D^2u$ is smooth up to the boundary, so its~$C^2$-norm is always finite.

\begin{lemma}
Let~$P$ be a Delzant polytope, corresponding to a uniformly~$K$-stable toric manifold. Fix a Higgs term~$\xi$ and a symplectic potential~$u$, and let
\begin{equation*}
A=C-t\left(\left(\mathbbm{1}+\left(\mathbbm{1}-\xi\,D^2u\,\bar{\xi}\,D^2u\right)^{\frac{1}{2}}\right)^{-1}\!\!\!\xi\,D^2u\,\bar{\xi}\right)^{ab}_{,ab}.
\end{equation*}
Then~$\m{L}_A$ satisfies~\eqref{eq:properness_Futaki} for a fixed~$\lambda$, independent of~$t$,~$\xi$.
\end{lemma}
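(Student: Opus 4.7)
My strategy is to write $\m{L}_A(f) = \m{L}_C(f) + (\text{correction})$, with the correction manifestly non-negative on convex $f$, so that the uniform $K$-stability of $P$ transfers directly with the same constant.

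Set $G = D^2 u$ and $M = \xi\,G\,\bar\xi\,G$. Using the identity $x(1+\sqrt{1-x})^{-1} = 1 - \sqrt{1-x}$ exactly as at the end of the proof of Theorem~\ref{thm:HcscKSymplCoordThm}, the nonlinear term $A_t$ in $A=C-A_t$ rewrites as
\begin{equation*}
A_t = t\left[G^{-1} - (\mathbbm{1} - M)^{\frac{1}{2}}\,G^{-1}\right]^{ab}_{,ab}.
\end{equation*}
Applying Lemma~\ref{lemma:intbyparts_realmm} to $(\mathbbm{1} - M)^{1/2}G^{-1}$ (whose boundary regularity is granted by Proposition~\ref{proposition:toric_boundarycond}), together with the same lemma specialised to $\xi = 0$ (i.e.\ Donaldson's classical integration-by-parts formula) applied to $G^{-1}$, the two boundary terms $\int_{\diff P} f\,\mrm{d}\sigma$ cancel upon subtraction. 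This yields, for every convex $f \in \m{C}^0(P) \cap \m{C}^\infty(P^\circ)$,
\begin{equation*}
\int_P A_t\,f\,\mrm{d}\mu = t\int_P \mrm{Tr}\left(\left(\mathbbm{1} - (\mathbbm{1} - M)^{\frac{1}{2}}\right)G^{-1}\,D^2 f\right)\mrm{d}\mu,
\end{equation*}
and hence $\m{L}_A(f) = \m{L}_C(f) + \int_P A_t\,f\,\mrm{d}\mu$.

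The key step is to show that the correction integrand is pointwise non-negative. Borrowing the simultaneous diagonalisation from the proof of Theorem~\ref{thm:convexity}, there is a matrix $Q$ with $G^{-1} = Q\bar Q^{\transpose}$ and $M = Q\Delta Q^{-1}$, where $\Delta$ is diagonal with entries $\delta_i \in [0,1)$. Consequently
\begin{equation*}
\left(\mathbbm{1} - (\mathbbm{1} - M)^{\frac{1}{2}}\right)G^{-1} = Q\left(\mathbbm{1} - (\mathbbm{1} - \Delta)^{\frac{1}{2}}\right)\bar Q^{\transpose},
\end{equation*}
which is Hermitian positive semidefinite with eigenvalues $1 - \sqrt{1 - \delta_i} \geq 0$. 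Since $D^2 f$ is real symmetric positive semidefinite for convex $f$, the trace of the product is pointwise non-negative.

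To conclude, for every $f \in \m{C}_\infty$ we obtain $\m{L}_A(f) \geq \m{L}_C(f) \geq \lambda_0 \int_{\diff P} f\,\mrm{d}\sigma$, where $\lambda_0 > 0$ is the uniform $K$-stability constant of $P$. On affine-linear $f$, $D^2 f = 0$ makes the correction vanish, so $\m{L}_A$ inherits vanishing on such $f$ from $\m{L}_C$. Since $\lambda_0$ depends only on the polytope, the claim holds with $\lambda = \lambda_0$, independent of $t$ and $\xi$. The only step demanding real verification is the positive semidefiniteness of the correction matrix, but this is immediate from the diagonalisation already employed in the convexity result; I do not foresee a substantive obstacle.
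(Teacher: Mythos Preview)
Your proof is correct and follows essentially the same route as the paper: decompose $\m{L}_A(f)=\m{L}_C(f)+\text{(correction)}$, rewrite the correction via the identity $x(1+\sqrt{1-x})^{-1}=1-\sqrt{1-x}$, integrate by parts with Lemma~\ref{lemma:intbyparts_realmm}, and verify that $\bigl(\mathbbm{1}-(\mathbbm{1}-M)^{1/2}\bigr)G^{-1}$ is positive semidefinite using the $Q$-diagonalisation. Your explicit computation $Q\bigl(\mathbbm{1}-(\mathbbm{1}-\Delta)^{1/2}\bigr)\bar Q^{\transpose}$ is exactly what the paper invokes by reference to the proof of Theorem~\ref{thm:Kstab}, and your observation that the correction vanishes on affine-linear $f$ is a harmless addition.
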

\begin{proof}
Choose~$\lambda>0$ such that $\m{L}_C(f)\geq\lambda\int_{\diff P}f\mrm{d}\sigma$ for all~$f\in\m{C}_\infty$. By definition, we have
\begin{equation*}
\m{L}_A(f)=\m{L}_C(f)+t\int_Pf\left(\left(\mathbbm{1}+\left(\mathbbm{1}-\xi\,D^2u\,\bar{\xi}\,D^2u\right)^{\frac{1}{2}}\right)^{-1}\!\!\!\xi\,D^2u\,\bar{\xi}\right)^{ab}_{,ab}\mrm{d}\mu
\end{equation*}
so to prove our claim it is sufficient to show that, for a convex function~$f$, we have
\begin{equation}\label{eq:claim_pos}
\int_Pf\left(\left(\mathbbm{1}+\left(\mathbbm{1}-\xi\,D^2u\,\bar{\xi}\,D^2u\right)^{\frac{1}{2}}\right)^{-1}\!\!\!\xi\,D^2u\,\bar{\xi}\right)^{ab}_{,ab}\mrm{d}\mu\geq0.
\end{equation}
Write~$G=D^2u$ as usual. As in the proof of Theorem~\ref{thm:HcscKSymplCoordThm} (see the discussion after equation~\eqref{eq:HcscKSymplRealEq}) we can rewrite the integrand in~\eqref{eq:claim_pos} using the identity 
\begin{equation*}
\left(\left(\mathbbm{1}+\left(\mathbbm{1}-\xi\,G\,\bar{\xi}\,G\right)^{\frac{1}{2}}\right)^{-1}\!\!\!\xi\,G\,\bar{\xi}\right)^{ab}_{,ab}=G^{ab}_{,ab}-\left(\left(\mathbbm{1}-\xi\,G\,\bar{\xi}\,G\right)^{\frac{1}{2}}\,G^{-1}\right)^{ab}_{,ab}
\end{equation*}
and integrating by parts using Lemma~\ref{lemma:intbyparts_realmm} we obtain
\begin{align*}
&\int_Pf\left(\left(\mathbbm{1}+\left(\mathbbm{1}-\xi\,G\,\bar{\xi}\,G\right)^{\frac{1}{2}}\right)^{-1}\!\!\!\xi\,G\,\bar{\xi}\right)^{ab}_{,ab}\mrm{d}\mu=\\
&=\int_P\mrm{Tr}\left(G^{-1}D^2f\right)\mrm{d}\mu-\int_P\mrm{Tr}\left(\left(\mathbbm{1}-\xi\,G\,\bar{\xi}\,G\right)^{\frac{1}{2}}\,G^{-1}\,D^2f\right)\mrm{d}\mu=\\
&=\int_P\mrm{Tr}\left(\left(G^{-1}-\left(\mathbbm{1}-\xi\,G\,\bar{\xi}\,G\right)^{\frac{1}{2}}\,G^{-1}\right)D^2f\right)\mrm{d}\mu.
\end{align*}
From the proof of Theorem \ref{thm:Kstab}, we know that~$G^{-1}-\left(\mathbbm{1}-\xi\,G\,\bar{\xi}\,G\right)^{\frac{1}{2}}\,G^{-1}$ is positive definite. As~$f\in\m{C}_\infty$ is convex, this proves our claim~\eqref{eq:claim_pos}.\qedhere
\end{proof}

\addcontentsline{toc}{section}{References}

\end{document}